\newfont{\ffont}{cmr10}
\newcommand{\cA}{\mathcal{A}}
\newcommand{\cB}{\mathcal{B}}
\newcommand{\cF}{\mathcal{F}}
\newcommand{\cX}{\mathcal{X}}
\newcommand{\cT}{\mathcal{T}}
\newcommand{\cC}{\mathcal{C}}
\newcommand{\Tr}{\mathrm{Tr}}
\newcommand{\al}{\alpha}
\newcommand{\be}{\beta}
\newcommand{\om}{\omega}
\newcommand{\ga}{\gamma}
\newcommand{\del}{\delta}
\newcommand{\ka}{\kappa}
\newcommand{\Om}{\Omega}
\newcommand{\si}{\sigma}
\newcommand{\Del}{\Delta}
\newcommand{\DelO}{\bar{\Delta}}
\newcommand{\R}{\mathbb{R}}
\newcommand{\C}{\mathbb{C}}
\newcommand{\N}{\mathbb{N}}
\newcommand{\E}{\mathbb{E}}
\newcommand{\bP}{\mathbb{P}}
\newcommand{\eps}{\varepsilon}
\newcommand{\ti}{\times}
\newcommand{\argmin}{\mathrm{argmin}}
\newcommand{\xspace}{\hbox{\kern-2.5pt}}
\renewcommand{\labelenumi}{(\alph{enumi})}
\begin{document}

\newtheorem{definition}{Definition}[section]
\newtheorem{theorem}[definition]{Theorem}
\newtheorem{conjecture}[definition]{Conjecture}
\newtheorem{proposition}[definition]{Proposition}
\newtheorem{corollary}[definition]{Corollary}
\newtheorem{remark}[definition]{Remark}
\newtheorem{lemma}[definition]{Lemma}
\newtheorem{example}{Example}[section]
\newtheorem{exercise}{Exercise}[section]

\title{Tail bounds via generic chaining}
\author{Sjoerd Dirksen}

\address{Universit\"{a}t Bonn\\
Hausdorff Center for Mathematics\\
Endenicher Allee 60\\
53115 Bonn\\
Germany} \email{sjoerd.dirksen@hcm.uni-bonn.de}

\thanks{This research was supported by SFB 1060 of the German Research Foundation (DFG)}
\keywords{Generic chaining, deviation inequalities, suprema of empirical processes, restricted isometry property, chaos processes}
\maketitle

\begin{abstract}
We modify Talagrand's generic chaining method to obtain upper bounds for all $p$-th moments of the supremum of a stochastic process. These bounds lead to an estimate for the upper tail of the supremum with optimal deviation parameters. We apply our procedure to improve and extend some known deviation inequalities for suprema of unbounded empirical processes and chaos processes. As an application we give a significantly simplified proof of the restricted isometry property of the subsampled discrete Fourier transform.
\end{abstract}

\section{Introduction}

This paper is concerned with generic chaining, a method introduced by Talagrand to estimate the expected value of the supremum of a stochastic process. This method grew out of the classical chaining method and the later majorizing measures method, which were developed by, among others, Kolmogorov, Dudley, Fernique and Talagrand, to understand the continuity properties of stochastic processes. Generic chaining yields estimates for the expected value of the supremum in terms of so-called $\ga$-functionals, which measure the metric complexity of the index set of the process \cite{Tal01,Tal05}. The resulting bounds are known to be sharp in several interesting situations. For instance, the famous majorizing measures theorem \cite{Tal87} states that this is the case for suprema of Gaussian processes, provided that the index set is equipped with the canonical metric induced by the process.\par
In practical applications of generic chaining in statistics, compressed sensing, and geometric functional analysis, see e.g.\ \cite{FoR13,Kol11}, it is often not sufficient to have an upper bound for the expected supremum of a process. One also needs to know how probable it is that the supremum of the process exceeds the upper bound. To that aim, a generic chaining bound is typically supplemented with a tail bound for the deviation of the supremum with respect to its expected value. There is an extensive and rapidly growing literature on such deviation inequalities, see for instance the monographs \cite{BLM13,Led01} for a detailed introduction and a historical overview.\par
The purpose of this paper is to provide an alternative to the two-step procedure sketched above. We present a simple and general way to directly obtain upper deviation inequalities for the supremum of a stochastic process $(X_t)_{t\in T}$ using generic chaining. The idea is to alter the generic chaining procedure to produce bounds for not only the first, but for all $p$-th moments of $\sup_{t\in T}|X_t|$. Together with a standard optimization argument using Markov's inequality this yields an upper tail bound. The deviation parameters in the resulting tail bound are sharp up to numerical constants. In particular, the bound is qualitatively as good as the one obtained by combining the usual generic chaining bound for $\E\sup_{t\in T} |X_t|$ with the best possible upper tail bound for the deviation $\sup_{t\in T}|X_t| - \E\sup_{t\in T} |X_t|$.\par
To give a concrete illustration of these statements, consider the simplest case that $(X_t)_{t\in T}$ is a centered Gaussian process and let $d(s,t)=(\E|X_t-X_s|^2)^{1/2}$ be the canonical metric on $T$. Under this assumption, we prove that for some universal constants $C,D>0$ and any $1\leq p<\infty$,
$$\Big(\E\sup_{t\in T}|X_t|^p\Big)^{1/p} \leq C\ga_{2,p}(T,d) + D\si\sqrt{p},$$
where $\ga_{2,p}$ is a truncated version of the $\ga_2$-functional familiar from generic chaining and $\si^2 = \sup_{t\in T} \E(X_t^2)$ is the weak variance of the process. Estimates for the constants $C$ and $D$ are provided in Remark~\ref{rem:singleMet}, although these can certainly be improved. As a direct consequence of the stated $L^p$-bounds we find
\begin{equation}
\label{eqn:tailBdGausInt}
\bP\Big(\sup_{t\in T} |X_t| \geq \sqrt{e}(C\ga_2(T,d) + uD\si)\Big) \leq e^{-u^2/2} \qquad (u\geq 1).
\end{equation}
The bound (\ref{eqn:tailBdGausInt}) matches, up to a possibly worse constant $D$, the upper tail bound obtained by combining the optimal generic chaining estimate for $\E\sup_{t\in T} |X_t|$ with the sharp concentration inequality for suprema of Gaussian processes due to Ibragimov, Sudakov, and Tsirelson \cite[Theorem 5.8]{BLM13}.\par
A first advantage of the method proposed here is its simplicity: an upper tail bound is obtained essentially for free once one uses generic chaining to estimate the expected value. In contrast, the usual proofs of deviation inequalities for suprema of stochastic processes rely on sophisticated tools such as the entropy method, see for example \cite[Chapters 6 and 12]{BLM13}. A second advantage is that the method only requires knowledge of the tail behavior of the individual increments of the process. In particular, one can obtain deviation inequalities for processes with dependent increments, see for example the uniform Azuma-Hoeffding inequality in Corollary~\ref{cor:Azuma-Hoeffding}. In the context of empirical processes, the method can readily cover situations in which the summands of the empirical process are unbounded and/or dependent. Under these conditions deviation inequalities are still scarcely available, see \cite{Ada08,GeL12} for notable exceptions.\par
To demonstrate the wide applicability of our method, we establish an upper tail bound for suprema of stochastic processes in several interesting situations. In Section~\ref{sec:singleMet} we consider two `standard' generic chaining situations. In Theorem~\ref{thm:chainsplitGeneral} we investigate processes which have exponentially decaying increments with respect to a single metric. In Theorem~\ref{thm:mixedTailTal}, we consider processes with a mixed subgaussian-subexponential tail, in particular suprema of empirical processes. The latter result positively answers an open question raised in Talagrand's new book \cite{Tal14}. In the second part of the paper, i.e., Sections~\ref{sec:EmpProc} and \ref{sec:chaos}, we consider two more involved chaining arguments. In Theorem~\ref{thm:supAverages} we find $L^p$-bounds for the supremum of an empirical process which takes the form of an average of squares. It can be viewed as an $L^p$-version of a result due to Mendelson, Pajor and Tomczak-Jaegermann \cite{MPT07}, see Theorem~\ref{thm:MPT} and Corollary~\ref{cor:MPTImproved} for a detailed comparison. In the final section we deal with suprema of second order chaos processes.\par
In Section~\ref{sec:RIP} we use Theorem~\ref{thm:chainsplitGeneral} to simplify the proof of the restricted isometry property of the subsampled discrete Fourier transform. This cornerstone result in compressed sensing was originally discovered by Cand\`{e}s and Tao \cite{CaT06} and was later refined by Rudelson and Vershynin \cite{RuV08} and Rauhut \cite{Rau10}. The argument presented in Section~\ref{sec:RIP} more generally applies to matrices obtained by sampling from bounded orthonormal systems, see Theorem~\ref{thm:BOS}. Let us mention as another application that one can use Theorem~\ref{thm:supAverages} to sharpen the Johnson-Lindenstrauss embedding of Klartag and Mendelson \cite{KlM05}. To keep this paper at a reasonable length we discuss this second application in a separate note \cite{Dir14}.\par
We conclude this introduction with a brief discussion of related work. In \cite{ViV07}, Viens and Vizcarra modified a classical, that is, non-generic chaining argument to obtain upper deviation inequalities for so-called sub-$n$-th chaos processes. Theorem~\ref{thm:chainsplitGeneral} below yields an improvement of this result as a special case. We also improve a deviation inequality for suprema of unbounded empirical processes obtained recently by Van de Geer and Lederer \cite{GeL12}, see the discussion after Corollary~\ref{cor:supEmpProc}. In \cite{Lat11}, Lata{\l}a used a procedure related to ours to prove a comparison result for the strong and weak moments of certain log-concave random vectors. Finally, Krahmer, Mendelson, and Rauhut \cite{KMR13} used a chaining argument to prove $L^p$-bounds for the supremum of a second order chaos process. In Theorem~\ref{thm:chaos} we give an improvement of their bounds with a simplified proof.

\section{Preliminaries}

Throughout, we will use $(\Om,\cF,\bP)$ to denote a probability space and write $\E$ for the expected value. To describe the tail behavior of random variables we consider for every $0<\al<\infty$ the function
$$\psi_{\al}(x) = \exp(x^{\al}) - 1 \qquad (x\geq 0).$$
For a complex-valued random variable $X$ we define
$$\|X\|_{\psi_{\al}} = \inf\{C>0 \ : \ \E\psi_{\al}(|X|/C)\leq 1\}.$$
If $\|X\|_{\psi_{\al}}<\infty$ then we call $X$ a \emph{$\psi_{\al}$-random variable}. It is common to say that $X$ is \emph{subgaussian} if $\|X\|_{\psi_2}<\infty$ and \emph{subexponential} if $\|X\|_{\psi_1}<\infty$. If $\al\geq 1$ then $\psi_{\al}$ is an Orlicz function and the space
$$L_{\psi_{\al}}(\Om,\cF,\bP) = \{X:\Om\rightarrow\C \ \mathrm{measurable} \ : \ \|X\|_{\psi_{\al}}<\infty\}$$
is an Orlicz space. For $0<\al<1$ the space $L_{\psi_{\al}}$ is only a quasi-Banach space. We will make use of the following H\"{o}lder type inequality, which can be derived from Young's inequality: if $X,Y$ are $\psi_2$-random variables, then $XY$ is $\psi_1$ and
\begin{equation}
\label{eqn:C-SPsi}
\|XY\|_{\psi_1}\leq \|X\|_{\psi_2} \|Y\|_{\psi_2}.
\end{equation}
For more information on Orlicz spaces we refer to \cite{KrR61}.\par
Let us recall some familiar concepts from generic chaining \cite{Tal05}. Let $\cX$ be a normed linear space and let $(T,d)$ be a semi-metric space, i.e., $d(x,z) \leq d(x,y) + d(y,z)$ and $d(x,y)=d(y,x)$ for $x,y,z \in T$. To avoid complications with the measurability of suprema of stochastic processes we will always assume that the cardinality $|T|$ of $T$ is \emph{finite}. Criteria for measurability of the supremum of a stochastic process in the case of an (uncountably) infinite index set can be found in \cite[Section 1.7]{VaW96}. We denote the \emph{diameter} of $T$ with respect to $d$ by
$$\Del_d(T) = \sup_{s,t \in T} d(s,t).$$
We say that an $\cX$-valued process $(X_t)_{t\in T}$ is \emph{$\psi_{\al}$ with respect to $d$} if for all $s,t\in T$,
\begin{equation}
\label{eqn:defPsiAlProcess}
\bP(\|X_t - X_s\|\geq ud(t,s)) \leq 2\exp(-u^{\al}) \qquad (u\geq 0).
\end{equation}
A sequence $\cT=(T_n)_{n\geq 0}$ of subsets of $T$ is called \emph{admissible} if $|T_0|=1$ and $|T_n|\leq 2^{2^n}$ for all $n\geq 1$. For any $0<\al<\infty$, the \emph{$\ga_{\al}$-functional} of $(T,d)$ is defined by
$$\ga_{\al}(T,d) = \inf_{\cT}\sup_{t\in T} \sum_{n=0}^{\infty} 2^{n/{\al}}d(t,T_n),$$
where the infimum is taken over all admissible sequences and we write $d(t,T_n)=\inf_{s\in T_n}d(t,s)$.\par
For any given $u>0$ let $N(T,d,u)$ denote the covering number of $T$, i.e., the smallest number of balls of radius $u$ in $(T,d)$ needed to cover $T$. One can always estimate
\begin{equation}
\label{eqn:gammaFunEstEntInt}
\ga_{\al}(T,d) \lesssim_{\al} \int_0^{\infty} \Big(\log N(T,d,u)\Big)^{1/\al} \ du,
\end{equation}
see \cite[Section 1.2]{Tal05} for the case $\al=2$ (the other cases are similar). However, the reverse estimate fails in general \cite[Section 2.1]{Tal05}.\par
We conclude by fixing some notation. We use $\|\cdot\|_p$, $1\leq p\leq\infty$, to denote the $\ell^p$-norms. We will write $A\lesssim_{\be} B$ if $A\leq C_{\be}B$ for a constant $C_{\be}$ which only depends on a parameter $\be$. Finally, if $S$ is a finite set and $\pi:S\rightarrow\R_+$ is a map, then $\argmin_{s\in S} \pi(s)$ denotes a minimizer of this map, which may not be unique.

\section{Suprema of $\psi_{\al}$ and mixed tail processes}
\label{sec:singleMet}

We begin our discussion by considering two standard generic chaining situations. First, in Theorem~\ref{thm:chainsplitGeneral} we establish tail bounds for suprema of $\psi_{\al}$ processes. At the end of the section, in Theorem~\ref{thm:mixedTailTal}, we do the same for processes with a mixed tail.\par
In the formulation of our $L^p$-bounds we will make use of the following truncated version of the $\gamma$-functionals. For a given $1\leq p<\infty$, we will always write $l:=\lfloor\log_2(p)\rfloor$, where $\lfloor\cdot\rfloor$ denotes the integer part. We define
\begin{equation}
\label{eqn:gammaTruncated}
\ga_{\al,p}(T,d) = \inf_{\cT}\sup_{t\in T} \sum_{n\geq l} 2^{n/{\al}}d(t,T_n).
\end{equation}
Clearly, $\ga_{\al,p}(T,d)\leq \ga_{\al}(T,d)$ for all $1\leq p<\infty$ and $\ga_{\al,1}(T,d)=\ga_{\al}(T,d)$. Since we assume $T$ to be finite, the infimum in (\ref{eqn:gammaTruncated}) is actually attained. We will call a sequence $\cT$ that achieves the infimum \emph{optimal}.
\begin{remark}
\emph{If $T$ is an infinite set, then the $L^p$-bounds presented below continue to hold if we interpret $\E\sup_{t\in T}\|X_t-X_{t_0}\|^p$ as the lattice supremum}
$$\E\sup_{t\in T}\|X_t-X_{t_0}\|^p := \sup\{\E\sup_{t\in F}\|X_t-X_{t_0}\|^p \ : \ F\subset T, \ |F|<\infty\}.$$
\end{remark}
\begin{theorem}
\label{thm:chainsplitGeneral}
Let $0<\al<\infty$. If $(X_t)_{t\in T}$ is $\psi_{\al}$, then there exist constants $C_{\al},D_{\al}>0$ depending only on $\al$, such that for any $t_0 \in T$ and $1\leq p<\infty$,
\begin{equation}
\label{eqn:chainsplitGeneralLp}
\Big(\E\sup_{t\in T}\|X_t - X_{t_0}\|^p\Big)^{1/p} \leq C_{\al}\ga_{\al,p}(T,d) + 2 \sup_{t\in T} (\E\|X_t-X_{t_0}\|^p)^{1/p}.
\end{equation}
As a consequence, for any $u\geq 1$,
\begin{equation}
\label{eqn:chainsplitGeneralTail}
\bP\Big(\sup_{t\in T}\|X_t - X_{t_0}\| \geq e^{1/\al}(C_{\al}\ga_{\al}(T,d) + uD_{\al}\Del_d(T))\Big) \leq \exp(-u^{\al}/\al).
\end{equation}
\end{theorem}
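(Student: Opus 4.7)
The plan is to establish the $L^p$ moment bound \eqref{eqn:chainsplitGeneralLp} first and then derive the tail bound \eqref{eqn:chainsplitGeneralTail} from it via Markov's inequality. Fix $p\geq 1$, set $l=\lfloor\log_2 p\rfloor$, let $\cT=(T_n)_{n\geq 0}$ be an admissible sequence that attains the infimum in the definition of $\ga_{\al,p}(T,d)$, and for each $n$ let $\pi_n:T\to T_n$ select a closest point. We may assume $T_0=\{t_0\}$. Since $T$ is finite, the chaining decomposition
\[
X_t - X_{t_0} = (X_{\pi_l(t)} - X_{t_0}) + \sum_{n>l} (X_{\pi_n(t)} - X_{\pi_{n-1}(t)})
\]
is a finite sum, and the two pieces will be treated separately.

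The \emph{low-resolution piece} uses only the cardinality bound $|T_l|\leq 2^{2^l}\leq 2^p$. By the triangle inequality in $L^p(\Om)$,
\[
\Big(\E\sup_{t\in T}\|X_{\pi_l(t)}-X_{t_0}\|^p\Big)^{1/p} \leq \Big(\sum_{s\in T_l}\E\|X_s-X_{t_0}\|^p\Big)^{1/p} \leq |T_l|^{1/p}\sup_{t\in T}(\E\|X_t-X_{t_0}\|^p)^{1/p},
\]
and $|T_l|^{1/p}\leq 2$ delivers the second summand on the right-hand side of \eqref{eqn:chainsplitGeneralLp}.

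The \emph{high-resolution piece} is the heart of the argument. For every $n>l$, every pair $(s,s')\in T_n\times T_{n-1}$ (of which there are at most $2^{2^{n+1}}$), and every $u\geq 0$, the $\psi_\al$ hypothesis gives $\bP(\|X_s-X_{s'}\|\geq u d(s,s'))\leq 2\exp(-u^\al)$. Choosing $u_n$ so that $u_n^\al=A 2^n+v$ with $A>0$ large enough to beat the cardinality factor, a single union bound produces an event $\cE_v$ of probability $\geq 1-Ce^{-v}$ on which, for every $t\in T$,
\[
\sum_{n>l}\|X_{\pi_n(t)}-X_{\pi_{n-1}(t)}\| \leq C_\al \sum_{n>l}(2^{n/\al}+v^{1/\al})\,d(\pi_n(t),\pi_{n-1}(t)).
\]
The $2^{n/\al}$-weighted sum is bounded via $d(\pi_n(t),\pi_{n-1}(t))\leq d(t,T_n)+d(t,T_{n-1})$ and the near-optimality of $\cT$, giving $\leq C_\al \ga_{\al,p}(T,d)$. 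The unweighted sum is where the truncation at level $l$ is crucial: the elementary estimate $2^{-(l+1)/\al}\leq p^{-1/\al}$ yields
\[
\sum_{n>l} d(\pi_n(t),\pi_{n-1}(t)) \leq 2^{-(l+1)/\al}\sum_{n>l} 2^{n/\al}d(\pi_n(t),\pi_{n-1}(t)) \leq C_\al p^{-1/\al}\ga_{\al,p}(T,d).
\]
Hence on $\cE_v$ we have $\sup_{t\in T}\sum_{n>l}\|X_{\pi_n(t)}-X_{\pi_{n-1}(t)}\|\leq C_\al \ga_{\al,p}(T,d)\bigl(1+(v/p)^{1/\al}\bigr)$. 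Substituting $w=(v/p)^{1/\al}$ in the layer-cake formula and controlling the resulting gamma-type integral by a constant depending only on $\al$ gives an $L^p$ bound of $C_\al \ga_{\al,p}(T,d)$ for the high-resolution piece. Adding this to the low-resolution bound proves \eqref{eqn:chainsplitGeneralLp}.

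To derive \eqref{eqn:chainsplitGeneralTail}, combine \eqref{eqn:chainsplitGeneralLp} with $\ga_{\al,p}(T,d)\leq \ga_\al(T,d)$ and the elementary consequence of the $\psi_\al$ hypothesis $(\E\|X_t-X_{t_0}\|^p)^{1/p}\leq c_\al p^{1/\al}d(t,t_0)\leq c_\al p^{1/\al}\Del_d(T)$ to obtain $(\E Z^p)^{1/p}\leq C_\al \ga_\al(T,d)+D_\al p^{1/\al}\Del_d(T)$, where $Z:=\sup_{t\in T}\|X_t-X_{t_0}\|$. Markov's inequality with $\la=e^{1/\al}(C_\al \ga_\al(T,d)+D_\al p^{1/\al}\Del_d(T))$ gives $\bP(Z\geq \la)\leq e^{-p/\al}$, and the choice $p=u^\al$ (valid for $u\geq 1$) produces \eqref{eqn:chainsplitGeneralTail}. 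The main obstacle is the absorption of the extra $v^{1/\al}$ term from the uniform tail bound into $\ga_{\al,p}$ rather than into a separate diameter term; this is exactly what forces the introduction of the truncated functional $\ga_{\al,p}$ and explains why the truncation level $l=\lfloor\log_2 p\rfloor$ is the natural one, the same level at which the cardinality bound $|T_l|\leq 2^p$ closes the low-resolution estimate.
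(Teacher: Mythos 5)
Your proposal is correct, and it has the same skeleton as the paper's proof: decompose $X_t - X_{t_0}$ into a low-resolution part $X_{\pi_l(t)}-X_{t_0}$ (killed by the cardinality bound $|T_l|\le 2^p$) and a high-resolution chain $\sum_{n>l}(X_{\pi_n(t)}-X_{\pi_{n-1}(t)})$ (killed by a union bound over link pairs), then convert the resulting tail bound into an $L^p$ bound and derive \eqref{eqn:chainsplitGeneralTail} via Markov with $p=u^\al$. Where you differ from the paper is in the parametrization of the union-bound threshold and, consequently, in \emph{where} the factor $p$ enters. The paper sets the threshold at $u\,2^{n/\al}d(\pi_n(t),\pi_{n-1}(t))$ and obtains a failure probability $\lesssim \exp(-p u^\al/4)$; the $p$ appears in the \emph{probability} exponent precisely because the union runs only over $n>l$ with $2^l\ge p/2$. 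You instead set the threshold at $(A2^n+v)^{1/\al}d(\pi_n(t),\pi_{n-1}(t))$, which gives a $p$-\emph{independent} failure probability $\lesssim e^{-v}$, and you recover the factor $p$ on the \emph{deterministic} side via the nice elementary estimate
$\sum_{n>l} d(\pi_n(t),\pi_{n-1}(t)) \le 2^{-(l+1)/\al}\sum_{n>l}2^{n/\al}d(\pi_n(t),\pi_{n-1}(t)) \lesssim p^{-1/\al}\ga_{\al,p}(T,d)$,
so that the chain bound reads $\lesssim \ga_{\al,p}(1+(v/p)^{1/\al})$ on the good event. After the change of variable $v=pr^\al$ the two intermediate tail bounds become identical in form, $\bP\bigl(\sup_t\|X_t-X_{\pi_l(t)}\|>C_\al\ga_{\al,p}(1+r)\bigr)\lesssim e^{-pr^\al}$, so everything reconciles; but your parametrization makes the role of the truncation level $l=\lfloor\log_2 p\rfloor$ visible in a purely deterministic inequality, which is a pleasant alternative perspective to the paper's (which hides it in the union-bound bookkeeping of Lemma~\ref{lem:unionBoundEst}). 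One small thing you could spell out is the absorption of the constant $A$ in $(A2^n+v)^{1/\al}\le C_\al(2^{n/\al}+v^{1/\al})$, which uses subadditivity of $x\mapsto x^{1/\al}$ for $\al\ge 1$ and the power-mean inequality when $0<\al<1$; and the gamma-type integral at the end should be done carefully so that the resulting constant is $1+O(1/p)$ raised to the $1/p$, i.e.\ uniformly bounded, but both steps are routine and your sketch is sound.
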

\begin{remark}
\label{rem:singleMet}
\begin{enumerate}
\renewcommand{\labelenumi}{(\roman{enumi})}
\item \emph{If $(X_t)_{t\in T}$ is a real-valued Gaussian process and $d$ is the canonical distance $d(s,t) := (\E|X_t-X_s|^2)^{1/2}$, then Theorem~\ref{thm:chainsplitGeneral} produces a sharp $L^p$-bound (up to universal constants). Indeed, Talagrand's majorizing measures theorem \cite{Tal87,Tal05} states that
$$\ga_2(T,d)\lesssim \E\sup_{t\in T} |X_t - X_{t_0}|.$$
Moreover, it is of course always true that
$$\sup_{t\in T} (\E|X_t-X_{t_0}|^p)^{1/p} \leq \big(\E\sup_{t\in T}|X_t - X_{t_0}|^p\big)^{1/p}.$$
\item Although deviation inequalities are not discussed in \cite{Tal05}, it is implicitly used there that
$$\bP\Big(\sup_{t\in T}\|X_t - X_{t_0}\|\geq uc_1\ga_2(T,d)\Big) \leq c_2e^{-u^2/2} \qquad (u\geq 2)$$
in the case $\al=2$. Since $\Del(T)\leq \ga_2(T,d)$ (and $\Del(T)$ is potentially much smaller), this bound is qualitatively worse than (\ref{eqn:chainsplitGeneralTail}). After the first version of this paper was finished, the author learned that (\ref{eqn:chainsplitGeneralTail}) is proved for $\al=2$ in Talagrand's new book using a different method (see \cite[Theorem 2.2.27]{Tal14}).
\item Let $n\in \N$. In \cite[Theorem 3.1]{ViV07}, it was shown that if $(X_t)_{t\in T}$ is a} sub-$n$-th chaos process\emph{, meaning in the terminology used here that it is $\psi_{2/n}$, then it satisfies the tail bound
    \begin{equation}
    \label{eqn:ViV}
    \bP\Big(\sup_{t\in T}|X_t-X_{t_0}|\geq C_n M_n + uC_n'\Del_d(T)\Big) \leq 2\exp\Big(-\frac{u^{2/n}}{2}\Big),
    \end{equation}
    where $C_n,C_n'$ are constants depending only on $n$ and $M_n$ is the entropy integral
    $$M_n = \int_0^{\infty} \Big(\log N(T,d,u)\Big)^{n/2} \ du.$$
    By (\ref{eqn:gammaFunEstEntInt}) this result is a direct consequence of Theorem~\ref{thm:chainsplitGeneral} (with possibly different constants) but not vice versa. Note that already in the subgaussian case $n=1$ the bound (\ref{eqn:ViV}) is not sharp (see \cite[Section 2.1]{Tal05}).
\item To keep our exposition clear we do not keep precise track of the numerical constants in the chaining arguments. However, from the proof below it is clear that $C_{\al},D_{\al}$ are decreasing in $\al$. Moreover, one can decrease $C_{\al}$ at the expense of increasing $D_{\al}$ (and vice versa). To give an idea of their order of magnitude, one can readily deduce from the proof (without making any effort to optimize the constants) that $C_2 \leq (1+\sqrt{2})(16\sqrt{\pi}e^{1/2e}+\sqrt{2})\leq 86$ and $D_2\leq 4\sqrt{2\pi}e^{1/e}e^{-1/2}\leq 9$. Although these estimates can certainly be improved, the method cannot yield optimal numerical constants. In particular losses occur when passing between moment and tail bounds (cf.\ Lemmas~\ref{lem:MomentsToTails} and \ref{lem:TailsToMoments}).}
\end{enumerate}
\end{remark}
\begin{proof}[Proof of Theorem~\ref{thm:chainsplitGeneral}]
Let $\cT=(T_n)_{n\geq 0}$ be an optimal admissible sequence for $\ga_{\al,p}(T,d)$ and let $\pi=(\pi_n)_{n\geq 0}$ be a sequence of functions $\pi_n:T\rightarrow T_n$ defined by $\pi_n(t)=\argmin_{s\in T_n}d(s,t)$. Set $l=\lfloor\log_2(p)\rfloor$. We make the decomposition
\begin{equation}
\label{eqn:triangleSplit}
\Big(\E\sup_{t\in T}\|X_t - X_{t_0}\|^p\Big)^{\frac{1}{p}} \leq \Big(\E\sup_{t\in T}\|X_t - X_{\pi_l(t)}\|^p\Big)^{\frac{1}{p}} + \Big(\E\sup_{t\in T}\|X_{\pi_l(t)} - X_{t_0}\|^p\Big)^{\frac{1}{p}}.
\end{equation}
We estimate the second term on the right hand side by Lemma~\ref{lem:supSmallSet},
\begin{align}
\label{eqn:boundSecondSplit}
\Big(\E\sup_{t\in T}\|X_{\pi_l(t)} - X_{t_0}\|^p\Big)^{\frac{1}{p}} & \leq 2\sup_{t\in T}(\E\|X_{\pi_l(t)} - X_{t_0}\|^p)^{\frac{1}{p}} \nonumber\\
& \leq 2\sup_{t\in T}(\E\|X_{t} - X_{t_0}\|^p)^{\frac{1}{p}}.
\end{align}
For the first term, we write the telescoping sum
$$X_t - X_{\pi_l(t)} = \sum_{n>l} X_{\pi_n(t)} - X_{\pi_{n-1}(t)}.$$
Since the increments of $X$ are $\psi_{\al}$, we have for $n>l$,
\begin{equation*}
\bP\Big(\|X_{\pi_n(t)} - X_{\pi_{n-1}(t)}\|\geq u2^{n/\al}d(\pi_n(t),\pi_{n-1}(t))\Big) \leq 2\exp(-u^{\al} 2^n).
\end{equation*}
Note that $|\{(\pi_n(t),\pi_{n-1}(t)); t\in T\}|\leq |T_n| \ |T_{n-1}| \leq 2^{2^n}2^{2^{n-1}}\leq 2^{2^{n+1}}$. Therefore, if $\Om_{u,p}$ denotes the event
$$\forall n>l, \forall t\in T \ : \ \|X_{\pi_n(t)} - X_{\pi_{n-1}(t)}\|\leq u2^{n/\al}d(\pi_n(t),\pi_{n-1}(t)),$$
then Lemma~\ref{lem:unionBoundEst} shows that
$$\bP(\Om_{u,p}^c) \leq c \exp(-p u^{\al}/4) \qquad (u\geq 2^{1/\al}).$$
If the event $\Om_{u,p}$ occurs, then
\begin{align*}
\Big\|\sum_{n>l} X_{\pi_n(t)} - X_{\pi_{n-1}(t)}\Big\| & \leq \sum_{n>l} \|X_{\pi_n(t)} - X_{\pi_{n-1}(t)}\| \\
& \leq u \sum_{n>l} 2^{n/\al} d(\pi_n(t),\pi_{n-1}(t)) \leq (1+2^{1/\al})\ga_{\al,p}(T,d).
\end{align*}
Thus, $\sup_{t\in T} \|X_t-X_{\pi_l(t)}\|\leq u(1+2^{1/\al})\ga_{\al,p}(T,d)$. In conclusion,
$$\bP\Big(\sup_{t\in T} \|X_t-X_{\pi_l(t)}\|> u(1+2^{1/\al})\ga_{\al,p}(T,d)\Big) \leq c\exp(-p u^{\al}/4),$$
whenever $u\geq 2^{1/\al}$. Lemma~\ref{lem:LpBdElem} implies that
\begin{equation}
\label{eqn:boundFirstSplit}
\Big(\E\sup_{t\in T}\|X_t - X_{\pi_l(t)}\|^p\Big)^{1/p} \leq C_{\al}\ga_{\al,p}(T,d).
\end{equation}
The moment bound (\ref{eqn:chainsplitGeneralLp}) follows by combining (\ref{eqn:triangleSplit}), (\ref{eqn:boundSecondSplit}) and (\ref{eqn:boundFirstSplit}). For the tail bound, note that (\ref{eqn:defPsiAlProcess}) and Lemma~\ref{lem:TailsToMoments} together imply that
$$\sup_{t\in T} (\E\|X_t - X_{t_0}\|^p)^{1/p} \leq D_{\al}\Del_d(T)p^{1/\al}.$$
The final assertion follows by using this estimate in (\ref{eqn:chainsplitGeneralLp}) and applying Lemma~\ref{lem:MomentsToTails}.
\end{proof}
Note that Theorem~\ref{thm:chainsplitGeneral} does not require any independence assumptions on the increments of the process $(X_t)_{t\in T}$. To illustrate this, we recall the Azuma-Hoeffding inequality, see e.g.\ \cite[Lemma 4.1]{Led01}. If $X=(X_k)_{0\leq k\leq n}$ is a discrete-time real-valued martingale and $\Delta_k(X) = X_k-X_{k-1}$ denotes its $k$-th difference, then
$$\bP(|X_n-X_0|\geq u) \leq 2\exp\Big(-\frac{u^2}{2\sum_{k=1}^n\|\Delta_k(X)\|_{\infty}^2}\Big) \qquad (u\geq 0).$$
Combined with Theorem~\ref{thm:chainsplitGeneral} we immediately obtain the following uniform version of the Azuma-Hoeffding bound.
\begin{corollary}
\label{cor:Azuma-Hoeffding}
Let $X_t=(X_{t,k})_{1\leq k\leq n}$, $t\in T$, be a family of discrete-time martingales with respect to the same filtration. We consider the metric
$$d(s,t) = \Big(\sum_{k=1}^n\|\Delta_k(X_t - X_s)\|_{\infty}^2\Big)^{1/2}.$$
For any $u\geq 1$,
$$\bP\Big(\sup_{t\in T} |X_{t,n} - X_{t,0}| \geq \sqrt{e}(C_2\ga_2(T,d) + D_2\Del_d(T)u)\Big) \leq e^{-u^2/2}.$$
\end{corollary}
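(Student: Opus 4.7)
The plan is to view the real-valued process $(Y_t)_{t\in T}$ with $Y_t:=X_{t,n}-X_{t,0}$ as a $\psi_2$ process in the metric $d$, and then apply Theorem~\ref{thm:chainsplitGeneral} with $\al=2$. The key structural observation is that for any $s,t\in T$ the difference $X_t-X_s=(X_{t,k}-X_{s,k})_{0\leq k\leq n}$ is itself a discrete-time martingale with respect to the common filtration, since the martingale property is preserved by linear combinations; its $k$-th difference is precisely $\Delta_k(X_t-X_s)$, so $\sum_{k=1}^{n}\|\Delta_k(X_t-X_s)\|_\infty^2 = d(s,t)^2$.

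First I would feed this difference martingale into the quoted Azuma-Hoeffding inequality. Since $Y_t-Y_s=(X_{t,n}-X_{s,n})-(X_{t,0}-X_{s,0})$ is exactly the terminal minus initial value of $X_t-X_s$, this yields
$$\bP\bigl(|Y_t-Y_s|\geq v\bigr) \leq 2\exp\Bigl(-\frac{v^2}{2\,d(s,t)^2}\Bigr) \qquad (v\geq 0).$$
Setting $v=u\sqrt{2}\,d(s,t)$ shows that $(Y_t)_{t\in T}$ satisfies the $\psi_2$ increment condition (\ref{eqn:defPsiAlProcess}) with respect to the rescaled metric $d':=\sqrt{2}\,d$.

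Then I would invoke the tail bound (\ref{eqn:chainsplitGeneralTail}) from Theorem~\ref{thm:chainsplitGeneral} for the process $(Y_t)_{t\in T}$ with $\al=2$ and the metric $d'$, choosing a reference index $t_0\in T$ (or, if none is available, adjoining to $T$ the zero martingale) so that $Y_{t_0}=0$ and $\sup_{t\in T}|Y_t|=\sup_{t\in T}|Y_t-Y_{t_0}|$. Using the homogeneity $\ga_2(T,\la d)=\la\ga_2(T,d)$ and $\Del_{\la d}(T)=\la\Del_d(T)$, the factor $\sqrt{2}$ is absorbed into the universal constants in (\ref{eqn:chainsplitGeneralTail}), producing exactly the claimed bound.

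The principal point requiring care is essentially bookkeeping: reconciling the Azuma-Hoeffding exponent with the normalization of (\ref{eqn:defPsiAlProcess}) (handled by the rescaling $d\to\sqrt{2}\,d$), and ensuring that the reference-point form of the conclusion of Theorem~\ref{thm:chainsplitGeneral} produces the unsubtracted supremum $\sup_t|X_{t,n}-X_{t,0}|$ claimed here. Notably, no cross-family independence or identical-distribution assumption on the $X_t$ is needed---only the common filtration, which is exactly what makes each difference $X_t-X_s$ a martingale and allows the pointwise Azuma-Hoeffding bound to be applied directly.
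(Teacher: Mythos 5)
Your proposal is correct and follows the paper's intended route exactly: apply Azuma--Hoeffding to the difference martingale $X_t-X_s$ (which is a martingale since the $X_t$ share a filtration) to verify the $\psi_2$ increment condition, then invoke the tail bound (\ref{eqn:chainsplitGeneralTail}) of Theorem~\ref{thm:chainsplitGeneral} with $\al=2$. The two bookkeeping points you flag---the $\sqrt{2}$ rescaling needed to match the normalization in (\ref{eqn:defPsiAlProcess}), and the need for a reference martingale $t_0$ with $X_{t_0,n}=X_{t_0,0}$---are genuine but minor, and the paper's ``immediately obtain'' is implicitly absorbing both of them into the unspecified constants $C_2, D_2$.
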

Let $d_{1},d_{2}$ be two semi-metrics on $T$. We say that a process $(X_t)_{t\in T}$ has mixed subgaussian-subexponential increments, or simply \emph{has a mixed tail}, with respect to the pair $(d_{1},d_{2})$ if for all $s,t\in T$,
\begin{equation}
\label{eqn:mixedTailDef}
\bP(\|X_t - X_s\|\geq \sqrt{u}d_{2}(t,s) + ud_{1}(t,s))  \leq 2e^{-u} \qquad (u\geq 0).
\end{equation}
This means that the first part of the tail behaves as the tail of a subgaussian random variable and the second part as the tail of a subexponential random variable. In Theorem~\ref{thm:mixedTailTal} we prove a tail bound for the supremum of a process with a mixed tail. The result improves \cite[Theorem 1.2.9]{Tal05} and, in fact, positively answers an open question in Talagrand's new book (see the discussion after \cite[Theorem 2.2.28]{Tal14}).\par
In the proof it will be convenient to work with an alternative definition of the $\ga_{\al}$-functionals. Let us say that a sequence $\cA=(\cA_n)_{n\geq 0}$ of partitions of $T$ is \emph{admissible} if it is increasing with respect to the refinement ordering and $|\cA_n|\leq 2^{2^n}$. For any $t\in T$, let $A_n(t)$ be the unique element in the partition $\cA_n$ containing $t$. We now set
$$\ga_{\al}'(T,d) = \inf_{\cA}\sup_{t\in T} \sum_{n=0}^{\infty} 2^{n/\al}\Del_d(A_n(t)),$$
where the infimum is taken over all admissible partitions $\cA$ of $T$. It can be shown that
\begin{equation}
\label{eqn:gaEquivGaPrime}
\ga_{\al}(T,d) \leq \ga_{\al}'(T,d) \lesssim_{\al} \ga_{\al}(T,d),
\end{equation}
see the discussion following \cite[Theorem 1.3.5]{Tal05} for details.
\begin{theorem}
\label{thm:mixedTailTal}
If $(X_t)_{t\in T}$ has a mixed tail, then there is a constant $C>0$ such that for any $1\leq p<\infty$,
\begin{equation}
\label{eqn:mixedTailTal}
\Big(\E \sup_{t\in T} \|X_t-X_{t_0}\|^p\Big)^{1/p} \leq C(\ga_2(T,d_2) + \ga_1(T,d_1)) + 2\sup_{t\in T} (\E\|X_t-X_{t_0}\|^p)^{1/p}.
\end{equation}
As a consequence, there are constants $c,C>0$ such that for any $u\geq 1$,
\begin{equation*}
\bP\Big(\sup_{t\in T} \|X_t-X_{t_0}\| \geq C(\ga_2(T,d_2) + \ga_1(T,d_1)) + c(\sqrt{u}\Del_{d_2}(T) + u\Del_{d_1}(T))\Big)\leq e^{-u}.
\end{equation*}
\end{theorem}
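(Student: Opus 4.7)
The proof follows the template of Theorem~\ref{thm:chainsplitGeneral}, but the chaining must now simultaneously accommodate both metrics $d_1$ and $d_2$. I would first select admissible sequences $\cT^{(1)}=(T_n^{(1)})_{n\geq 0}$ and $\cT^{(2)}=(T_n^{(2)})_{n\geq 0}$ that are approximately optimal for $\ga_{1,p}(T,d_1)$ and $\ga_{2,p}(T,d_2)$, respectively, and merge them into a single admissible sequence by setting $T_0=\{t_0\}$ and $T_n = T_{n-1}^{(1)}\cup T_{n-1}^{(2)}$ for $n\geq 1$; a brief check shows $|T_n|\leq 2^{2^n}$. Since the $d_i$-distances from $t$ to $T_n$ do not exceed the distances to $T_{n-1}^{(i)}$, the corresponding projections $\pi_n:T\to T_n$ satisfy, with $l=\lfloor\log_2(p)\rfloor$,
$$\sum_{n>l} 2^{n/2}\,d_2(\pi_n(t),\pi_{n-1}(t))\lesssim \ga_{2,p}(T,d_2), \qquad \sum_{n>l} 2^n\,d_1(\pi_n(t),\pi_{n-1}(t))\lesssim \ga_{1,p}(T,d_1).$$
One may alternatively work with admissible partitions via (\ref{eqn:gaEquivGaPrime}), which sidesteps the ambiguity in the choice of projection.

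Next, decompose
$$X_t - X_{t_0} = (X_{\pi_l(t)} - X_{t_0}) + \sum_{n>l}(X_{\pi_n(t)} - X_{\pi_{n-1}(t)}).$$
The first piece is treated exactly as in the proof of Theorem~\ref{thm:chainsplitGeneral} using Lemma~\ref{lem:supSmallSet}, contributing the $2\sup_{t\in T}(\E\|X_t-X_{t_0}\|^p)^{1/p}$ term. For the chaining increments I would apply the mixed-tail hypothesis (\ref{eqn:mixedTailDef}) at the scale $v_n := u\cdot 2^n$: for any fixed pair,
$$\bP\bigl(\|X_{\pi_n(t)} - X_{\pi_{n-1}(t)}\| > \sqrt{v_n}\,d_2(\pi_n(t),\pi_{n-1}(t)) + v_n\,d_1(\pi_n(t),\pi_{n-1}(t))\bigr)\leq 2e^{-u\cdot 2^n}.$$
Since there are at most $2^{2^{n+1}}$ such pairs at level $n$, the union bound underlying Lemma~\ref{lem:unionBoundEst} produces an event of probability at least $1-c\exp(-pu/4)$ (for $u$ above an absolute threshold) on which all these increment bounds hold. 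On this event, summing telescopes separately into a $\sqrt{u}\,\ga_{2,p}(T,d_2)$ contribution and a $u\,\ga_{1,p}(T,d_1)$ contribution, up to absolute constants. Lemma~\ref{lem:LpBdElem} then converts this uniform tail estimate into an $L^p$-bound, and the inequalities $\ga_{\al,p}\leq \ga_\al$ yield (\ref{eqn:mixedTailTal}).

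For the tail consequence, the mixed-tail assumption together with Lemma~\ref{lem:TailsToMoments} gives
$$\sup_{t\in T}(\E\|X_t - X_{t_0}\|^p)^{1/p}\lesssim \sqrt{p}\,\Del_{d_2}(T) + p\,\Del_{d_1}(T),$$
and substituting this into (\ref{eqn:mixedTailTal}) and applying Lemma~\ref{lem:MomentsToTails} produces the stated deviation inequality. The main subtlety, and most of the bookkeeping, lies in the simultaneous control of both chains along a single admissible sequence: the single scale choice $v_n = u\cdot 2^n$ is precisely what produces the factors $\sqrt{v_n}=\sqrt{u}\,2^{n/2}$ and $v_n = u\cdot 2^n$, which align perfectly with the $\ga_2$- and $\ga_1$-summands. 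This alignment is what makes the mixed subgaussian-subexponential tail fit cleanly with the combination $\ga_2+\ga_1$.
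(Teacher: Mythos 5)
Your overall template (decomposing at level $l$, single scale choice $v_n=u2^n$, Lemma~\ref{lem:unionBoundEst}, Lemma~\ref{lem:LpBdElem}, then Lemmas~\ref{lem:TailsToMoments} and \ref{lem:MomentsToTails} for the tail consequence) matches the paper, and you have correctly identified that simultaneous control of both metrics along a single chain is the crux. However, the construction you actually describe has a genuine gap. Taking $T_n = T_{n-1}^{(1)}\cup T_{n-1}^{(2)}$ ensures $d_i(t,T_n)\leq d_i(t,T_{n-1}^{(i)})$ for each $i$, but this does \emph{not} produce a single projection $\pi_n:T\to T_n$ for which both $d_1(\pi_n(t),\pi_{n-1}(t))$ and $d_2(\pi_n(t),\pi_{n-1}(t))$ are simultaneously small: the nearest element of $T_n$ in the $d_2$-metric may come from $T_{n-1}^{(2)}$ while the nearest in the $d_1$-metric comes from $T_{n-1}^{(1)}$, and they need not coincide. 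Since the mixed-tail bound (\ref{eqn:mixedTailDef}) for a fixed increment $X_{\pi_n(t)}-X_{\pi_{n-1}(t)}$ involves \emph{both} $d_1(\pi_n(t),\pi_{n-1}(t))$ and $d_2(\pi_n(t),\pi_{n-1}(t))$, you cannot run the chaining with two separate chains; the union construction therefore does not yield the claimed pair of inequalities for a single $(\pi_n)$.

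You do gesture at the correct fix when you mention working with admissible partitions via (\ref{eqn:gaEquivGaPrime}), and this is precisely what the paper does, but it is not a matter of mere bookkeeping convenience: it is essential. The paper takes near-optimal admissible \emph{partitions} $\cB$ for $\ga_2'(T,d_2)$ and $\cC$ for $\ga_1'(T,d_1)$, forms the common refinement $\cA_n = \{B\cap C : B\in\cB_{n-1},\ C\in\cC_{n-1}\}$ (which remains admissible since $|\cA_n|\leq 2^{2^{n-1}}\cdot 2^{2^{n-1}} = 2^{2^n}$), and picks one representative per cell to define $T_n$ and $\pi_n$. Because the $\cA_n$ refine, $\pi_n(t)$ and $\pi_{n-1}(t)$ both lie in $A_{n-1}(t)\subset B_{n-2}(t)\cap C_{n-2}(t)$, so $d_2(\pi_n(t),\pi_{n-1}(t))\leq\Del_{d_2}(B_{n-2}(t))$ \emph{and} $d_1(\pi_n(t),\pi_{n-1}(t))\leq\Del_{d_1}(C_{n-2}(t))$ simultaneously. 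This is what makes the two telescoping sums decouple into $\ga_2'(T,d_2)$ and $\ga_1'(T,d_1)$ pieces. If you replace the union-of-nets step with this intersection-of-partitions step, the rest of your argument goes through as written.
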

\begin{proof}
We select two admissible sequences of partitions $\cB=(\cB_n)_{n\geq 0}$ and $\cC=(\cC_n)_{n\geq 0}$ such that
\begin{align*}
& \sup_{t\in T} \sum_{n\geq 0} 2^{n/2}\Del_{d_2}(B_n(t)) \leq 2\ga_2'(T,d_2) \\
& \sup_{t\in T} \sum_{n\geq 0} 2^{n}\Del_{d_1}(C_n(t)) \leq 2\ga_1'(T,d_1).
\end{align*}
Let $\cA_n$ be the partition generated by $\cB_{n-1}$ and $\cC_{n-1}$, i.e.,
$$\cA_n = \{B\cap C \ : \ B \in \cB_{n-1}, \ C \in \cC_{n-1}\}.$$
Then $\cA=(\cA_n)_{n\geq 0}$ is increasing and
$$|\cA_n| \leq |\cB_{n-1}| \ |\cC_{n-1}| \leq 2^{2^{n-1}}2^{2^{n-1}} = 2^{2^n},$$
so $\cA$ is admissible. Observe that $A_n(t)=B_{n-1}(t)\cap C_{n-1}(t)$. For every $n\geq 0$ define a subset $T_n$ of $T$ by selecting exactly one point from each $A \in \cA_n$. In this way, we obtain an admissible sequence $\cT=(T_n)_{n\geq 0}$ of subsets of $T$. For every $n\in \N_{\geq 0}$ and $t\in T$ we let $\pi_n(t)$ be the unique element of $T_n\cap A_n(t)$. This yields a sequence $\pi=(\pi_n)_{n\geq 0}$ of maps $\pi_n:T\rightarrow T_n$.\par
Set $l=\lfloor\log_2(p)\rfloor$. As in the proof of Theorem~\ref{thm:chainsplitGeneral}, we make the decomposition (\ref{eqn:triangleSplit}) and estimate the second term on the right hand side of (\ref{eqn:triangleSplit}) as in (\ref{eqn:boundSecondSplit}). For the first term, we write the telescoping sum
$$X_t - X_{\pi_l(t)} = \sum_{n>l} X_{\pi_n(t)} - X_{\pi_{n-1}(t)}.$$
Since $X$ has a mixed tail, we have for $n>l$ and $u\geq 0$,
\begin{align*}
& \bP\Big(\|X_{\pi_n(t)} - X_{\pi_{n-1}(t)}\|\geq \sqrt{u}2^{n/2}d_{2}(\pi_n(t),\pi_{n-1}(t)) + u2^n d_{1}(\pi_n(t),\pi_{n-1}(t))\Big) \\
& \qquad \qquad \qquad \qquad \qquad \qquad \qquad \qquad \leq 2\exp(-u2^n).
\end{align*}
Note that $|\{(\pi_n(t),\pi_{n-1}(t)); t\in T\}|\leq |T_n| \ |T_{n-1}| \leq 2^{2^n}2^{2^{n-1}}\leq 2^{2^{n+1}}$. Therefore, if $\Om_{u,p}$ denotes the event
\begin{align*}
& \forall n>l, \forall t\in T \ : \ \|X_{\pi_n(t)} - X_{\pi_{n-1}(t)}\| \\
& \qquad \qquad \qquad \qquad \qquad \geq \sqrt{u}2^{n/2}d_{2}(\pi_n(t),\pi_{n-1}(t)) + u2^n d_{1}(\pi_n(t),\pi_{n-1}(t)),
\end{align*}
then Lemma~\ref{lem:unionBoundEst} shows that
$$\bP(\Om_{u,p}^c) \leq c \exp(-p u/4) \qquad (u\geq 2).$$
If the event $\Om_{u,p}$ occurs, then
\begin{align*}
& \Big\|\sum_{n>l} X_{\pi_n(t)} - X_{\pi_{n-1}(t)}\Big\| \\
& \qquad \leq \sum_{n>l} \|X_{\pi_n(t)} - X_{\pi_{n-1}(t)}\| \\
& \qquad \leq \sqrt{u}\sum_{n>l}2^{n/2}d_{2}(\pi_n(t),\pi_{n-1}(t)) + u \sum_{n>l}2^n d_{1}(\pi_n(t),\pi_{n-1}(t)).
\end{align*}
Observe that for $n\geq 2$ we have $\pi_n(t),\pi_{n-1}(t) \in A_{n-1}(t)\subset B_{n-2}(t)$ and so
$$d_2(\pi_n(t),\pi_{n-1}(t)) \leq \Del_{d_2}(B_{n-2}(t)).$$
Also,
$$d_2(\pi_1(t),\pi_0(t)) \leq \Del_{d_2}(B_0(t)) = \Del_{d_2}(T).$$
Therefore, by our choice of $\cB$,
\begin{align*}
\sum_{n>l} 2^{n/2}d_2(\pi_n(t),\pi_{n-1}(t)) & \leq \sum_{n>l} 2^{n/2}\Del_{d_2}(B_{n-2}(t)) \\
& \leq 4 \sum_{n\geq 0} 2^{n/2}\Del_{d_2}(B_{n}(t)) \leq 8 \ga_2'(T,d_2).
\end{align*}
Analogously, by our choice of $\cC$,
$$\sum_{n>l} 2^{n}d_1(\pi_n(t),\pi_{n-1}(t)) \leq 12\ga_1'(T,d_1)$$
Thus, $\sup_{t\in T} \|X_t-X_{\pi_l(t)}\|\leq 8\sqrt{u}\ga_2'(T,d_2) + 12u\ga_1'(T,d_1)$. As a consequence, we conclude that
$$\bP\Big(\sup_{t\in T} \|X_t-X_{\pi_l(t)}\|> 12u(\ga_2'(T,d_2) + \ga_1'(T,d_1))\Big) \leq c\exp(-p u/4),$$
whenever $u\geq 2$. By Lemma~\ref{lem:LpBdElem} and (\ref{eqn:gaEquivGaPrime})
\begin{equation}
\label{eqn:boundFirstSplit}
\Big(\E\sup_{t\in T}\|X_t - X_{\pi_l(t)}\|^p\Big)^{1/p} \lesssim \ga_2'(T,d_2) + \ga_1'(T,d_1) \lesssim \ga_2(T,d_2) + \ga_1(T,d_1).
\end{equation}
This proves the moment bound (\ref{eqn:mixedTailTal}). For the tail bound, note that (\ref{eqn:mixedTailDef}) and Lemma~\ref{lem:TailsToMoments} together imply that
$$\sup_{t\in T} (\E\|X_t - X_{t_0}\|^p)^{1/p} \lesssim \Del_{d_2}(T)\sqrt{p} + \Del_{d_1}(T)p.$$
The assertion follows by using this estimate in (\ref{eqn:mixedTailTal}) and applying Lemma~\ref{lem:MomentsToTails}.
\end{proof}
In Section~\ref{sec:EmpProc} below we use Theorem~\ref{thm:mixedTailTal} to derive tail bounds for suprema of empirical processes.

\section{Restricted isometry constants of subsampled unitary matrices}
\label{sec:RIP}

In this section we present an application of Theorem~\ref{thm:chainsplitGeneral} in compressed sensing. We use the following terminology. For a given $s\in \N$, the \emph{$s$-th restricted isometry constant} $\del_s$ of an $m\ti N$ matrix $A$ is the smallest constant $\del\geq 0$ such that
$$(1-\del)\|x\|_2^2 \leq \|Ax\|_2^2 \leq (1+\del)\|x\|_2^2,$$
for all $s$-sparse $x\in \C^N$. Equivalently, if we let $\|x\|_0=|\{i \ : \ x_i\neq 0\}|$ and
$$D_{s,N} = \{x \in \C^N \ : \ \|x\|_2=1, \ \|x\|_0\leq s\},$$
then
$$\del_s = \sup_{x\in D_{s,N}} \Big|\|Ax\|_2^2 - 1\Big|.$$
The restricted isometry constants play an important role in compressed sensing, see \cite[Chapter 6]{FoR13} for more information. We restrict ourselves to the task of giving a simpler proof of the fact that the random matrix obtained by uniformly sampling rows of the discrete Fourier transform has small restricted isometry constants with high probability. This result was obtained by Cand\`{e}s and Tao in the influential paper \cite{CaT06}. An improved result was later found by Rudelson and Vershynin \cite{RuV08} using a different method. Finally, by elaborating on this method a better probability estimate was obtained by Rauhut \cite{Rau10}.\par
We consider the following (more general) setup. Let $U$ be a unitary $N\ti N$ matrix and suppose that for some constant $K\geq 1$,
\begin{equation}
\label{eqn:assUkl}
\sup_{k,l} \sqrt{N} |U_{kl}| \leq K.
\end{equation}
We consider a sequence $(\theta_i)_{1\leq i\leq N}$ of i.i.d.\ copies of the random selector $\theta:\Om\rightarrow \{0,1\}$ which satisfies
$$\bP(\theta = 1) = \frac{m}{N}.$$
Let $I=\{i \in [N] \ : \ \theta_i=1\}$ be the random set of selected indices and note that its expected cardinality is $\E|I|=m$. Let $R_I:\C^N\rightarrow \C^{|I|}$ be the operator which restricts a vector to its entries in $I$ and consider the subsampled and rescaled matrix
\begin{equation}
\label{eqn:defUI}
U_I := \sqrt{\frac{N}{m}}R_I U.
\end{equation}
The subsampled discrete Fourier transform corresponds to taking
$$U_{kl} = \frac{1}{\sqrt{N}} \exp(2\pi i(k-1)(l-1)/N) \qquad (k,l=1,\ldots,N).$$
\begin{theorem}
\label{thm:RIPFourier}
\cite{CaT06, RuV08,Rau10} Let $U$ and $I$ be as above. Set $\del_s = \del_s(U_I)$. There exist universal constants $d_1,d_2>0$ such that for any given $s \in \N$ and $0<\del,\eta<1$, we have $\bP(\del_s \geq \del) \leq \eta$, provided that
\begin{equation}
\label{eqn:conditionFourier}
m\geq sK^2\del^{-2}\max\{d_1\log^2 s \log m\log N, d_2\log(\eta^{-1})\}.
\end{equation}
\end{theorem}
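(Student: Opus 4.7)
The plan is to write $\delta_s$ as the supremum of a centered empirical process, perform a Rademacher symmetrization, and then apply Theorem~\ref{thm:chainsplitGeneral} conditionally on the random sampling. Let $u_i\in\C^N$ denote the $i$-th row of $U$ and set $\xi_i:=\theta_i-m/N$. Since $U$ is unitary,
\[
\|U_Ix\|_2^2-\|x\|_2^2 \;=\; \frac{N}{m}\sum_{i=1}^N \xi_i\,|\langle u_i,x\rangle|^2 \;=:\; Z_x,
\]
so $\delta_s=\sup_{x\in D_{s,N}}|Z_x|$. A standard Rademacher symmetrization yields $(\E\delta_s^p)^{1/p}\lesssim(\E\sup_x|Z'_x|^p)^{1/p}$ with $Z'_x:=\frac{N}{m}\sum_i\varepsilon_i\theta_i|\langle u_i,x\rangle|^2$ and $(\varepsilon_i)$ independent Rademachers. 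Conditional on $\theta$, $(Z'_x)$ is a Rademacher sum, so Hoeffding's inequality shows its increments are $\psi_2$ with respect to the random metric
\[
d_\theta(x,y) \;:=\; \frac{N}{m}\Big(\sum_{i=1}^N \theta_i\bigl(|\langle u_i,x\rangle|^2-|\langle u_i,y\rangle|^2\bigr)^2\Big)^{1/2}.
\]

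Applying Theorem~\ref{thm:chainsplitGeneral} with $\alpha=2$ conditionally on $\theta$ and fixing any $x_0\in D_{s,N}$ gives
\[
\bigl(\E_\varepsilon\sup_{x}|Z'_x-Z'_{x_0}|^p\bigr)^{1/p} \;\lesssim\; \gamma_{2,p}(D_{s,N},d_\theta)+\sqrt{p}\,\Delta_{d_\theta}(D_{s,N}).
\]
To turn this into a deterministic estimate I would use the polarization identity $|\langle u_i,x\rangle|^2-|\langle u_i,y\rangle|^2=\mathrm{Re}\bigl(\langle u_i,x-y\rangle\,\overline{\langle u_i,x+y\rangle}\bigr)$ together with the pointwise bound $|\langle u_i,z\rangle|\leq K\sqrt{s/N}\,\|z\|_2$ valid for $s$-sparse $z$ by hypothesis on $U$. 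A direct calculation using $\sum_i\theta_i|\langle u_i,x-y\rangle|^2=(m/N)\|U_I(x-y)\|_2^2\leq(m/N)(1+\delta_{2s})\|x-y\|_2^2$ then produces the pointwise-in-$\theta$ inequality
\[
d_\theta(x,y)\;\leq\; C\,K\sqrt{s(1+\delta_{2s})/m}\,\|x-y\|_2,
\]
so that both $\gamma_{2,p}(D_{s,N},d_\theta)$ and $\Delta_{d_\theta}(D_{s,N})$ are bounded by $CK\sqrt{s(1+\delta_{2s})/m}$ times the corresponding Euclidean quantities on $D_{s,N}$. The one-point term $Z_{x_0}$ is controlled by a standard Bernstein estimate.

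Taking expectation in $\theta$, choosing $p\asymp\log(\eta^{-1})$, and applying Markov's inequality then yields, with probability at least $1-\eta$,
\[
\delta_s \;\leq\; C\,K\sqrt{\tfrac{s(1+\delta_{2s})}{m}}\Bigl(\gamma_2(D_{s,N},\|\cdot\|_2)+\sqrt{\log(\eta^{-1})}\Bigr).
\]
The self-referential factor $\sqrt{1+\delta_{2s}}$ is resolved by a standard bootstrap: one applies the argument with $s$ replaced by $2s$ and observes that as soon as $m$ exceeds the threshold in (\ref{eqn:conditionFourier}) one can ensure $\delta_{2s}\leq 1$, after which $\sqrt{1+\delta_{2s}}$ is absorbed. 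The single genuinely technical obstacle is the sharp Euclidean bound
\[
\gamma_2(D_{s,N},\|\cdot\|_2)\;\lesssim\;\sqrt{s}\,\log s\sqrt{\log m\log N}\,:
\]
a direct Dudley estimate of this functional only yields $\sqrt{s\log(eN/s)}$, which produces the wrong power of $s$ in the final condition on $m$. Recovering the sharp polylogarithmic factor requires the Rudelson-Vershynin-Rauhut refinement of Dudley's inequality—splitting the chaining integral at a carefully tuned scale and combining a volumetric covering estimate for the $s$-sparse Euclidean ball with an $\ell^\infty\to\ell^2$ covering bound obtained from Maurey's empirical method. This refined entropy estimate, imported into the present framework, is the technically most demanding step of the proof.
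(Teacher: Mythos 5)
There is a genuine gap, and it is in the step where you convert the random chaining metric into a deterministic one. You bound
\[
d_\theta(x,y)\;\leq\; C\,K\sqrt{s(1+\delta_{2s})/m}\,\|x-y\|_2
\]
by applying the restricted isometry $\sum_i\theta_i|\langle u_i,x-y\rangle|^2\leq(m/N)(1+\delta_{2s})\|x-y\|_2^2$ to the $x-y$ factor and the crude $\ell^\infty$-bound $|\langle u_i,x+y\rangle|\leq 2K\sqrt{s/N}$ to the $x+y$ factor. This throws away exactly the structure that makes the result sharp. After this step you are committed to controlling $\gamma_2(D_{s,N},\|\cdot\|_2)$, which, by the majorizing measures theorem, is precisely $\asymp\sqrt{s\log(eN/s)}$ (it is the Gaussian width of the set of $s$-sparse unit vectors); no Rudelson--Vershynin, Maurey, or any other refinement can push it below that. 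Plugging the sharp value into your bound gives $\gamma_2(D_{s,N},d_\theta)\lesssim K\,(s/\sqrt{m})\sqrt{(1+\delta_{2s})\log(eN/s)}$, and requiring this to be $\leq\delta$ forces $m\gtrsim s^2K^2\delta^{-2}\log(eN/s)$ --- the wrong power of $s$. Your observation that ``a direct Dudley estimate only yields $\sqrt{s\log(eN/s)}$'' correctly detects that something is off, but mislocates the problem: the issue is not that $\gamma_2(D_{s,N},\|\cdot\|_2)$ admits an improved estimate, but that you should never have been reduced to estimating it in the first place.

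The paper factors the increment the other way around. Writing $f_x(\theta_i)=\theta_i\tfrac{1}{\sqrt m}\langle\sqrt N U_i,x\rangle$, one bounds
\[
\Big(\sum_i(f_x^2-f_y^2)^2\Big)^{1/2}\;\leq\;2\sup_{z\in D_{s,N}}\Big(\sum_i f_z^2\Big)^{1/2}\,\max_i|f_x(\theta_i)-f_y(\theta_i)|,
\]
so the chaining metric is the $\ell^\infty$-type distance $d_t(x,y)=\max_i|f_x(\theta_i)-f_y(\theta_i)|$ (depending only on $x-y$), while $\sup_z(\sum_i f_z^2)^{1/2}=\sup_z\|U_Iz\|_2\leq\sqrt{1+\delta_s}$ is a scalar pulled out of the chain and handled by solving the resulting quadratic self-referential inequality in $(\E\delta_s^p)^{1/2p}$ (which replaces your $\delta_{2s}$-bootstrap). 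The $\gamma_2$-functional is then computed with respect to $d_t$, and \emph{that} functional satisfies the Rudelson--Vershynin bound $\gamma_2(D_{s,N},d_t)\lesssim K\sqrt{s/m}\,\log s\sqrt{\log m\log N}$: crucially this scales like $\sqrt s$, not $s$, precisely because for the $\ell^\infty$-norm Maurey's empirical method gives much smaller covering numbers in the critical scale range than the Euclidean metric does. With your factorization the $\ell^\infty$-structure sits on the $x+y$ factor, where it is useless for chaining (it is not a metric in $x-y$), while the Euclidean structure sits on the chaining variable, which is fatal. So the fix is not a better entropy estimate downstream but a different split of the increment upstream.
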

The proof of Theorem~\ref{thm:RIPFourier} in \cite{Rau10} (see also \cite[Theorem 12.32]{FoR13}), which refines the approach in \cite{RuV08}, consists of two parts: firstly, the expected value of $\del_s$ is estimated using a (classical) chaining argument. Secondly, a deviation inequality for suprema of bounded empirical processes is used to show that $\del_s$ is typically not much larger than its expected value. Here we shorten the proof by merging these two steps, hence dispensing with the concentration inequality. Note that we still use a certain entropy bound obtained in \cite{RuV08} (see (\ref{eqn:entBoundRV}) below), which is nontrivial to prove.
\begin{proof}[Proof of Theorem~\ref{thm:RIPFourier}]
Let $U_i$ be the $i$-th row of $U$. For every $x\in D_{s,N}$ we define $f_x(\theta_i)=\theta_i\frac{1}{\sqrt{m}}\langle\sqrt{N}U_i,x\rangle$. Since $U$ is unitary,
\begin{equation}
\label{eqn:applUunitary}
\sum_{i=1}^N \E f_x^2(\theta_i) = \frac{N}{m}\sum_{i=1}^N \E(\theta_i) |\langle U_i,x\rangle|^2 = \sum_{i=1}^N|\langle U_i,x\rangle|^2 = \|U x\|_2^2 = 1,
\end{equation}
and therefore we can write
$$\del_s = \sup_{x\in D_{s,N}} \Big|\sum_{i=1}^N f_x^2(\theta_i) - \E f_x^2(\theta_i)\Big|.$$
Fix $1\leq p<\infty$ and let $(\eps_i)_{i\geq 1}$ be a Rademacher sequence, i.e., a sequence of independent symmetric Bernoulli random variables. By a standard symmetrization argument
\cite[Lemma 6.3]{LeT91},
\begin{equation}
\label{eqn:symmFourier}
(\E\del_s^p)^{1/p} \leq 2\Big(\E\E_{\eps}\sup_{x\in D_{s,N}} \Big|\sum_{i=1}^N \eps_i f_x^2(\theta_i)\Big|^p\Big)^{1/p}.
\end{equation}
Now we fix $\om \in \Om$ and let $t_i=\theta_i(\om)$. By Hoeffding's inequality,
$$\bP_{\eps}\Big(\sum_{i=1}^N \eps_i (f_x^2(t_i) - f_y^2(t_i)) \geq u\Big(\sum_{i=1}^N (f_x^2(t_i) - f_y^2(t_i))^2\Big)^{1/2}\Big)\leq \exp(-u^2/2).$$
Moreover,
\begin{align*}
\Big(\sum_{i=1}^N (f_x^2(t_i) - f_y^2(t_i))^2\Big)^{1/2} & = \Big(\sum_{i=1}^N (f_x(t_i) - f_y(t_i))^2(f_x(t_i) + f_y(t_i))^2\Big)^{1/2} \\
& \leq 2\sup_{z\in D_{s,N}}\Big(\sum_{i=1}^N f_z^2(t_i)\Big)^{1/2} \max_{1\leq i\leq N}|f_x(t_i)-f_y(t_i)|.
\end{align*}
In conclusion, the process
$$x\mapsto \sum_{i=1}^N \eps_i f^2_x(t_i)$$
has subgaussian increments with respect to the metric
$$d(x,y) = \sup_{z\in D_{s,N}}\Big(\sum_{i=1}^N f_z^2(t_i)\Big)^{1/2}d_t(x,y),$$
where $d_t$ denotes the metric
$$d_t(x,y)=\max_{1\leq i\leq N}|f_x(t_i)-f_y(t_i)|.$$
By Theorem~\ref{thm:chainsplitGeneral},
\begin{align*}
& \Big(\E_{\eps}\sup_{x\in D_{s,N}} \Big|\sum_{i=1}^N \eps_i f_x^2(t_i)\Big|^p\Big)^{1/p} \\
& \qquad  \lesssim \ga_2(D_{s,N},d_t)\sup_{x\in D_{s,N}}\Big(\sum_{i=1}^N f_x^2(t_i)\Big)^{1/2} + \sup_{x\in D_{s,N}}\Big(\E_{\eps}\Big|\sum_{i=1}^N \eps_i f_x^2(t_i)\Big|^p\Big)^{1/p}.
\end{align*}
We apply (\ref{eqn:gammaFunEstEntInt}) with $\al=2$, i.e.,
$$\ga_2(D_{s,N},d_t) \lesssim \int_0^{\infty}\Big(\log N(D_{s,N},d_t,u)\Big)^{1/2} \ du$$
and use that Rudelson and Vershynin already proved that (see inequalities (3.8) and (3.9) in \cite{RuV08}, or \cite{FoR13,Rau10})
\begin{equation}
\label{eqn:entBoundRV}
\int_0^{\infty}\Big(\log N(D_{s,N},d_t,u)\Big)^{1/2} \ du \lesssim K\sqrt{\frac{s}{m}}\log s\sqrt{\log m}\sqrt{\log N}.
\end{equation}
Moreover, by Khintchine's (or Hoeffding's) inequality,
$$\Big(\E_{\eps}\Big|\sum_{i=1}^N \eps_i f_x^2(t_i)\Big|^p\Big)^{1/p} \leq \sqrt{p}\Big(\sum_{i=1}^N f_x^4(t_i)\Big)^{1/2} \leq \sqrt{p}\Big(\sum_{i=1}^N f_x^2(t_i)\Big)^{1/2}\max_{1\leq i\leq N}|f_x(t_i)|.$$
By H\"{o}lder's inequality and (\ref{eqn:assUkl})
$$\max_{1\leq i\leq N} |f_x(t_i)| \leq \max_{1\leq i\leq N}\frac{1}{\sqrt{m}}\|\sqrt{N}U_i\|_{\infty}\|x\|_1 \leq K\sqrt{\frac{s}{m}},$$
where the final inequality follows from the $s$-sparsity of $x$. Collecting our estimates we find
\begin{align*}
& \Big(\E_{\eps}\sup_{x\in D_{s,N}} \Big|\sum_{i=1}^N \eps_i f_x^2(t_i)\Big|^p\Big)^{1/p} \\
& \qquad \lesssim \sup_{x\in D_{s,N}}\Big(\sum_{i=1}^N f_x^2(t_i)\Big)^{1/2}K\sqrt{\frac{s}{m}}\Big(\log s\sqrt{\log m}\sqrt{\log N} + \sqrt{p}\Big).
\end{align*}
We now take the $L^p$ norm on $\Om$ on both sides and obtain using (\ref{eqn:symmFourier})
\begin{align}
\label{eqn:QIdeltas}
& \Big(\E\sup_{x\in D_{s,N}} \Big|\sum_{i=1}^N f_x^2(\theta_i)-\E f_x^2(\theta_i)\Big|^p\Big)^{1/p} \nonumber\\
& \ \ \lesssim \Big(\E\sup_{x\in D_{s,N}}\Big(\sum_{i=1}^N f_x^2(\theta_i)\Big)^{p/2}\Big)^{1/p}K\sqrt{\frac{s}{m}}\Big(\log s\sqrt{\log m}\sqrt{\log N} + \sqrt{p}\Big) \nonumber\\
& \ \ \lesssim \Big(\E\sup_{x\in D_{s,N}}\Big|\sum_{i=1}^N f_x^2(\theta_i) - \E f_x^2(\theta_i)\Big|^p\Big)^{1/2p}K\sqrt{\frac{s}{m}}\Big(\log s\sqrt{\log m}\sqrt{\log N} + \sqrt{p}\Big) \nonumber\\
& \ \ \qquad + K\sqrt{\frac{s}{m}}\Big(\log s\sqrt{\log m}\sqrt{\log N} + \sqrt{p}\Big),
\end{align}
where in the final step we use (\ref{eqn:applUunitary}). Note that (\ref{eqn:QIdeltas}) is a quadratic inequality in $(\E\del_s^p)^{1/2p}$. By solving it and subsequently squaring both sides we find
\begin{align*}
(\E\del_s^p)^{1/p} & \lesssim K\sqrt{\frac{s}{m}}\log s\sqrt{\log m}\sqrt{\log N} + K^2\frac{s}{m}\log^2 s\log m\log N \\
& \qquad \qquad + \sqrt{p} K\sqrt{\frac{s}{m}} + p K^2\frac{s}{m}.
\end{align*}
Since $1\leq p<\infty$ was arbitrary, Lemma~\ref{lem:MomentsToTails} implies that for any $u\geq 1$,
\begin{align*}
\bP\Big(\del_s & \gtrsim K\sqrt{\frac{s}{m}}\log s\sqrt{\log m}\sqrt{\log N} + K^2\frac{s}{m}\log^2 s\log m\log N \\
& \qquad \qquad \qquad + \sqrt{u}K\sqrt{\frac{s}{m}} + u K^2\frac{s}{m}\Big) \leq e^{-u}.
\end{align*}
Therefore, if we set $u=\log(\eta^{-1})$ and pick $m$ as in (\ref{eqn:conditionFourier}), then
\begin{align*}
\bP(\del_s\geq \del) & \leq \bP\Big(\del_s\gtrsim K\sqrt{\frac{s}{m}}\log s\sqrt{\log m}\sqrt{\log N} + K^2\frac{s}{m}\log^2 s\log m\log N \\
 & \qquad \qquad \qquad + \sqrt{\log(\eta^{-1})}K\sqrt{\frac{s}{m}} + \log(\eta^{-1}) K^2\frac{s}{m}\Big) \leq \eta.
\end{align*}
\end{proof}
A small modification of the proof of Theorem~\ref{thm:RIPFourier} yields the following result. It implies in particular the restricted isometry property of matrices obtained by sampling from bounded orthonormal systems, which was established in \cite[Theorem 8.4]{Rau10} (see also \cite[Theorem 12.32]{FoR13}).
\begin{theorem}
\label{thm:BOS}
Let $A$ be an $m\ti N$ random matrix with rows $\frac{1}{\sqrt{m}}X_1,\ldots,\frac{1}{\sqrt{m}}X_m$. Suppose that
$$\frac{1}{m}\sum_{i=1}^m \E|\langle X_i,x\rangle|^2 = \|x\|_2^2 \qquad \mathrm{for \ all} \ x\in \C^N$$
and
$$\max_{1\leq i\leq m} \|X_i\|_{\infty} \leq K.$$
Then there exist universal constants $d_1,d_2>0$ such that for any given $s \in \N$ and $0<\del,\eta<1$, we have $\bP(\del_s(A) \geq \del) \leq \eta$, provided that (\ref{eqn:conditionFourier}) holds.
\end{theorem}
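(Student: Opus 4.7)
The plan is to mimic the proof of Theorem~\ref{thm:RIPFourier} almost verbatim, replacing the Bernoulli-selected rows $\sqrt{N/m}\,\theta_i U_i$ with the independent vectors $X_i/\sqrt{m}$. For $x \in D_{s,N}$ and $i = 1,\ldots, m$, define $f_x(X_i) = \frac{1}{\sqrt{m}}\langle X_i, x\rangle$. The isotropy hypothesis $\frac{1}{m}\sum_i \E|\langle X_i, x\rangle|^2 = \|x\|_2^2$ yields $\sum_{i=1}^m \E|f_x(X_i)|^2 = 1$ for every $x \in D_{s,N}$, so
\[
\del_s(A) = \sup_{x\in D_{s,N}} \Big|\sum_{i=1}^m |f_x(X_i)|^2 - \E|f_x(X_i)|^2\Big|,
\]
which is the empirical-process form to which the chaining argument applies.

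Fix $1\le p<\infty$ and symmetrize via \cite[Lemma 6.3]{LeT91} to reduce to bounding $\bigl(\E\E_\eps\sup_{x\in D_{s,N}}|\sum_i \eps_i |f_x(X_i)|^2|^p\bigr)^{1/p}$. Conditioning on the $X_i$'s and applying Hoeffding's inequality as in the proof of Theorem~\ref{thm:RIPFourier} shows that, for fixed realizations $t_i = X_i(\om)$, the Rademacher process $x\mapsto \sum_i \eps_i |f_x(t_i)|^2$ is subgaussian with respect to the metric
\[
d(x,y) = \sup_{z\in D_{s,N}}\Big(\sum_{i=1}^m |f_z(t_i)|^2\Big)^{1/2} \max_{1\le i\le m}|f_x(t_i)-f_y(t_i)|.
\]
Applying Theorem~\ref{thm:chainsplitGeneral} with $\al=2$, together with the entropy integral bound (\ref{eqn:gammaFunEstEntInt}), reduces the problem to controlling the covering numbers of $D_{s,N}$ in the metric $d_t(x,y)=\max_i|f_x(t_i)-f_y(t_i)|$. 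The Rudelson--Vershynin bound (\ref{eqn:entBoundRV}) depends only on the uniform bound $\|X_i\|_\infty \le K$ (not on the specific distribution of the $X_i$), so it carries over verbatim. The individual-increment control is
\[
\max_{1\le i\le m}|f_x(t_i)| \le \tfrac{1}{\sqrt m}\|X_i\|_\infty\|x\|_1 \le K\sqrt{s/m},
\]
by H\"older and $s$-sparsity, and Khintchine gives the $L^p$ bound on single increments exactly as before.

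Taking $L^p$ norms on $\Om$ and using the identity $\sum_i \E|f_x(X_i)|^2 = 1$ yields a quadratic inequality in $(\E\del_s^p)^{1/2p}$, identical in form to (\ref{eqn:QIdeltas}). Solving and squaring produces
\[
(\E\del_s^p)^{1/p} \lesssim K\sqrt{\tfrac{s}{m}}\log s\sqrt{\log m\log N} + K^2\tfrac{s}{m}\log^2 s\log m\log N + \sqrt{p}\,K\sqrt{\tfrac{s}{m}} + p K^2\tfrac{s}{m},
\]
and then Lemma~\ref{lem:MomentsToTails} with $u=\log(\eta^{-1})$ delivers the claimed probability bound under (\ref{eqn:conditionFourier}).

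The only potential obstacle is verifying that the Rudelson--Vershynin covering estimate (\ref{eqn:entBoundRV}) truly does not use any structural property of $U$ beyond the uniform $\ell^\infty$ bound on the rows; inspection of their argument (or the presentation in \cite{Rau10,FoR13}) confirms this, so no new combinatorial work is required. Everything else is a routine transcription of the Fourier proof, with $\theta_i\sqrt{N}U_i$ replaced by $X_i$ throughout.
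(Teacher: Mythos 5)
Your proposal is correct and follows essentially the same route as the paper, which explicitly introduces Theorem~\ref{thm:BOS} as ``a small modification of the proof of Theorem~\ref{thm:RIPFourier}''; the translation you carry out (replace $\theta_i\sqrt{N}U_i$ by $X_i$, use the isotropy hypothesis in place of unitarity of $U$, note that the Rudelson--Vershynin entropy estimate depends only on the $\ell^\infty$ bound on the rows) is precisely the intended modification. The only item worth making fully explicit is that the $X_i$ must be independent for the symmetrization step, an assumption the paper leaves implicit and which you correctly invoke by calling them ``independent vectors.''
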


\section{Supremum of an empirical process}
\label{sec:EmpProc}

In this section we investigate tail bounds for suprema of empirical processes. We begin by applying Theorem~\ref{thm:mixedTailTal} to these processes. For this purpose we recall Bernstein's inequality. For a proof of this result, see for example \cite[Theorem 2.10]{BLM13}.
\begin{lemma}
(Bernstein's inequality) Let $X_1,\ldots,X_m$ be real-valued, independent, mean-zero random variables and suppose that for some constants $\si,K>0$,
$$\frac{1}{m}\sum_{i=1}^m \E|X_i|^q\leq \frac{q!}{2}\si^2 K^{q-2}, \qquad (q=2,3,\ldots).$$
Then,
\begin{equation}
\label{eqn:Bernstein}
\bP\Big(\Big|\frac{1}{m}\sum_{i=1}^m X_i\Big|\geq \frac{\si}{\sqrt{m}}\sqrt{2u} + \frac{K}{m}u\Big)\leq 2\exp(-u) \qquad (u\geq 0).
\end{equation}
In particular, if $X_1,\ldots,X_m$ are subexponential, then
\begin{equation}
\label{eqn:BernsteinPsi1}
\bP\Big(\Big|\frac{1}{m}\sum_{i=1}^m X_i\Big|\geq \frac{\nu}{\sqrt{m}}\sqrt{2u} + \frac{\ka}{m}u\Big)\leq 2\exp(-u) \qquad (u\geq 0),
\end{equation}
where $\nu^2 = \frac{1}{m}\sum_{i=1}^m \|X_i\|_{\psi_1}^2$ and $\ka=\max_{1\leq i\leq m} \|X_i\|_{\psi_1}$.
\end{lemma}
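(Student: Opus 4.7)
The plan is to apply the classical Cram\'{e}r--Chernoff method. For any $\la\in(0,1/K)$ and $t>0$, Markov's inequality applied to the exponential gives
\begin{equation*}
\bP\Big(\sum_{i=1}^m X_i\geq mt\Big) \leq e^{-m\la t}\prod_{i=1}^m \E e^{\la X_i},
\end{equation*}
so the task reduces to controlling the moment generating function of each $X_i$.

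The moment generating function is estimated by a direct Taylor expansion. Since $\E X_i=0$, I would write $\E e^{\la X_i}=1+\sum_{q\geq 2}\frac{\la^q}{q!}\E X_i^q$, bound $|\E X_i^q|\leq\E|X_i|^q$, and then combine $\log(1+x)\leq x$ with the hypothesis to obtain
\begin{equation*}
\sum_{i=1}^m\log\E e^{\la X_i}\leq m\sum_{q\geq 2}\frac{\la^q}{q!}\cdot\frac{1}{m}\sum_{i=1}^m\E|X_i|^q \leq \frac{m\la^2\si^2}{2}\sum_{q\geq 0}(\la K)^q=\frac{m\la^2\si^2}{2(1-\la K)}.
\end{equation*}
Plugging this into the Chernoff bound and optimizing in $\la\in(0,1/K)$ produces the standard Bennett--Bernstein estimate
\begin{equation*}
\bP\Big(\sum_{i=1}^m X_i\geq mt\Big)\leq \exp\Big(-\frac{mt^2}{2(\si^2+Kt)}\Big).
\end{equation*}
To recover the form stated in (\ref{eqn:Bernstein}), I would substitute $t=\tfrac{\si}{\sqrt{m}}\sqrt{2u}+\tfrac{K}{m}u$ and check (by splitting the denominator $\si^2+Kt$ and bounding each term below by one of the corresponding summands of $t$) that the resulting exponent is at most $-u$. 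Replacing $X_i$ by $-X_i$ and applying a union bound accounts for the absolute value and the factor $2$ in front of $e^{-u}$.

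For the subexponential refinement (\ref{eqn:BernsteinPsi1}), the key observation is that the $\psi_1$-norm controls all moments. From $\E e^{|X_i|/\|X_i\|_{\psi_1}}\leq 2$ and the Taylor expansion of the exponential one extracts $\E|X_i|^q\leq q!\,\|X_i\|_{\psi_1}^q$ for every $q\geq 1$. Factoring $\|X_i\|_{\psi_1}^q=\|X_i\|_{\psi_1}^2\cdot\|X_i\|_{\psi_1}^{q-2}$, averaging over $i$, and using $\nu,\ka$ as defined, one verifies a moment condition of the form required by the first part with $\si,K$ replaced by $\nu,\ka$ up to absolute constants. Inequality (\ref{eqn:BernsteinPsi1}) then follows from (\ref{eqn:Bernstein}).

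The substantive obstacle is cosmetic rather than structural: the argument is textbook, but reproducing the precise numerical constants in the statement --- in particular the $\sqrt{2}$ in the subgaussian summand and the plain $\ka$ (not a larger multiple) in front of $u$ in (\ref{eqn:BernsteinPsi1}) --- requires slightly sharper moment estimates for mean-zero $\psi_1$ variables than the naive bound $\E|X|^q\leq q!\,\|X\|_{\psi_1}^q$ provides. Any such tightening is routine and does not alter the strategy outlined above.
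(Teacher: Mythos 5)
The paper does not prove this lemma --- it simply cites \cite[Theorem~2.10]{BLM13} and moves on --- so there is no internal proof to compare against and you are supplying one from scratch. Your overall strategy (Cram\'er--Chernoff plus a Taylor estimate of the moment generating function) is the right one and is in fact the route taken in \cite{BLM13}. The MGF bound $\sum_i\log\E e^{\lambda X_i}\leq m\si^2\lambda^2/\bigl(2(1-\lambda K)\bigr)$ for $\lambda\in(0,1/K)$ is correct.

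However, the step from the MGF bound to the tail bound contains a genuine error. You assert that optimizing $\lambda$ produces the exponent $-mt^2/\bigl(2(\si^2+Kt)\bigr)$; in fact that exponent comes from the \emph{suboptimal} choice $\lambda=t/(\si^2+Kt)$, while the true optimum gives the strictly larger value $\tfrac{m\si^2}{K^2}h_1(Kt/\si^2)$ with $h_1(x)=1+x-\sqrt{1+2x}$. This matters here, because your subsequent plan --- substitute $t=\tfrac{\si}{\sqrt m}\sqrt{2u}+\tfrac{K}{m}u$ and check by splitting the denominator that $\tfrac{mt^2}{2(\si^2+Kt)}\geq u$ --- is false. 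A one-line expansion gives
\begin{equation*}
mt^2-2u\si^2-2uKt=-\frac{K^2u^2}{m}<0,
\end{equation*}
so for this $t$ one in fact has $\tfrac{mt^2}{2(\si^2+Kt)}<u$; a crude ``split the denominator'' argument only yields a lower bound of order $u/4$. To recover the exact constants in \eqref{eqn:Bernstein} you must carry out the Legendre inversion precisely: writing $1-K\lambda^{*}=1/r$, the critical point of $\lambda\mapsto\lambda s-\tfrac{m\si^2\lambda^2}{2(1-K\lambda)}$ sits at $s=\tfrac{m\si^2(r^2-1)}{2K}$ and has value $\tfrac{m\si^2(r-1)^2}{2K^2}$; setting this equal to $u$ gives $r=1+K\sqrt{2u/(m\si^2)}$ and hence $s=\sqrt{2m\si^2u}+Ku$, which after dividing by $m$ is exactly the threshold in \eqref{eqn:Bernstein}. (Equivalently, use $h_1^{-1}(u)=u+\sqrt{2u}$.) This is the missing idea; once inserted, the rest of your argument, including the honest caveat about the factor $\sqrt{2}$ in the $\psi_1$-refinement \eqref{eqn:BernsteinPsi1}, is fine.
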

Consider the following setup. Fix an $m\in \N$ and consider $m$ probability spaces $(\Om_1,\bP_1),\ldots,(\Om_m,\bP_m)$. Suppose that we are given a parameter set $T$ consisting of $m$-tuples $t=(t_1,\ldots,t_m)$. For every $t\in T$ we are given an $m$-tuple $X_t = (X_{t_1},\ldots,X_{t_m})$ of subexponential random variables $X_{t_i}:\Om_i\rightarrow\R$. We consider the empirical process
$$E_t = \frac{1}{m} \sum_{i=1}^m X_{t_i} - \E X_{t_i}.$$
In the terminology used here, Bernstein's inequality (\ref{eqn:BernsteinPsi1}) implies that the process $(E_t)_{t\in T}$ has a mixed tail with respect to the metrics $(\frac{1}{m}d_1,\frac{1}{\sqrt{m}}d_2)$, where
\begin{align*}
d_1(s,t) & = \max_{1\leq i\leq m} \|X_{t_i} - X_{s_i}\|_{\psi_1}, \\
d_2(s,t) & = \Big(\frac{1}{m}\sum_{i=1}^m \|X_{t_i} - X_{s_i}\|_{\psi_1}^2\Big)^{1/2}.
\end{align*}
Theorem~\ref{thm:mixedTailTal} can directly be applied to find the following tail bound.
\begin{corollary}
\label{cor:supEmpProc}
Let $E_t$ be as above and let $\si,K>0$ be constants such that
$$\sup_{t\in T}\frac{1}{m}\sum_{i=1}^m \E|X_{t_i} - \E X_{t_i}|^q\leq \frac{q!}{2}\si^2 K^{q-2}, \qquad (q=2,3,\ldots).$$
Then, for any $1\leq p<\infty$,
$$\Big(\E \sup_{t\in T} |E_t|^p\Big)^{1/p} \lesssim \Big(\frac{1}{\sqrt{m}}\ga_2(T,d_2) + \frac{1}{m}\ga_1(T,d_1)\Big) + \sqrt{p}\frac{\si}{\sqrt{m}} + p\frac{K}{m}.$$
In particular, there exist constants $c,C>0$ such that for any $u\geq 1$,
\begin{equation}
\label{eqn:supEmpProcTail}
\bP\Big(\sup_{t\in T}|E_t| \geq C\Big(\frac{1}{\sqrt{m}}\ga_2(T,d_2) + \frac{1}{m}\ga_1(T,d_1)\Big) + c\Big(\frac{\si}{\sqrt{m}}\sqrt{u} + \frac{K}{m}u\Big)\Big)\leq e^{-u}.
\end{equation}
\end{corollary}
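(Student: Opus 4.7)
The plan is essentially to invoke Theorem~\ref{thm:mixedTailTal} and handle the fixed reference point $t_0$ using Bernstein's inequality directly. The text preceding the corollary already observes, via the $\psi_1$-version of Bernstein (\ref{eqn:BernsteinPsi1}) applied to the centered differences $(X_{t_i}-X_{s_i})-\E(X_{t_i}-X_{s_i})$, that $(E_t)_{t\in T}$ has a mixed tail with respect to the pair $(\tfrac{1}{m}d_1,\tfrac{1}{\sqrt m}d_2)$. This means that the hypotheses of Theorem~\ref{thm:mixedTailTal} are already in place, and essentially all that remains is bookkeeping.

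First I would fix an arbitrary $t_0\in T$ and split
$$\bigl(\E\sup_{t\in T}|E_t|^p\bigr)^{1/p}\le\bigl(\E\sup_{t\in T}|E_t-E_{t_0}|^p\bigr)^{1/p}+\bigl(\E|E_{t_0}|^p\bigr)^{1/p}.$$
To the first term I would apply Theorem~\ref{thm:mixedTailTal} with the scaled metrics $\tfrac{1}{\sqrt m}d_2$ and $\tfrac{1}{m}d_1$; since the $\gamma_\alpha$-functionals are positively homogeneous in the metric, this yields the desired contributions $\tfrac{C}{\sqrt m}\gamma_2(T,d_2)+\tfrac{C}{m}\gamma_1(T,d_1)$, together with a residual $2\sup_{t\in T}(\E|E_t-E_{t_0}|^p)^{1/p}$.

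The remaining ingredient is to control these ``single-point'' $L^p$-norms using the hypothesis on $\sigma$ and $K$. The moment assumption is exactly the one that feeds into Bernstein's inequality (\ref{eqn:Bernstein}), which gives a mixed subgaussian--subexponential tail for each $E_t$; Lemma~\ref{lem:TailsToMoments} then converts this into $(\E|E_t|^p)^{1/p}\lesssim \tfrac{\sigma}{\sqrt m}\sqrt{p}+\tfrac{K}{m}p$, uniformly in $t$. A single triangle inequality $|E_t-E_{t_0}|\le|E_t|+|E_{t_0}|$ transfers the same estimate to the differences. Combining everything yields the $L^p$-bound in the corollary, and the tail bound (\ref{eqn:supEmpProcTail}) follows by applying Lemma~\ref{lem:MomentsToTails}.

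The only mildly delicate point to keep in mind is that the mixed-tail hypothesis feeding Theorem~\ref{thm:mixedTailTal} is expressed in terms of $\psi_1$-norms of differences (hence $d_1,d_2$), while the residual $\sup_t(\E|E_t-E_{t_0}|^p)^{1/p}$ must be bounded in terms of the intrinsic parameters $\sigma,K$ rather than the diameters $\Delta_{d_1}(T),\Delta_{d_2}(T)$; using Bernstein (\ref{eqn:Bernstein}) directly on each $E_t$, instead of attempting to bound $E_t-E_{t_0}$ through the metric $d_2$, is the step that makes this possible and produces a bound with no spurious diameter terms.
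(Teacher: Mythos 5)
Your proposal is correct and follows exactly the route the paper intends (the paper does not write out the proof, saying only that Theorem~\ref{thm:mixedTailTal} ``can directly be applied''). You correctly identify that the mixed-tail hypothesis with scaled metrics $(\tfrac{1}{m}d_1,\tfrac{1}{\sqrt m}d_2)$ is already established in the text preceding the corollary, that homogeneity of the $\gamma$-functionals produces the $\tfrac{1}{\sqrt m}\gamma_2(T,d_2)+\tfrac{1}{m}\gamma_1(T,d_1)$ terms, and that the residual single-point moment must be bounded via Bernstein's inequality (\ref{eqn:Bernstein}) on each $E_t$ directly (feeding the $\sigma,K$ assumption into Lemma~\ref{lem:TailsToMoments}), rather than through the diameters of $d_1,d_2$, which the hypothesis does not control.
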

Inequality (\ref{eqn:supEmpProcTail}) can be compared to a deviation inequality in \cite[Theorem 8]{GeL12}. In this result the generic chaining estimate
$$\frac{1}{\sqrt{m}}\ga_2(T,d_2) + \frac{1}{m}\ga_1(T,d_1)$$
occurring in (\ref{eqn:supEmpProcTail}) is replaced by an estimate obtained by `chaining along a tree', which is a variation of classical (i.e., non-generic) chaining. As a consequence, the estimate (\ref{eqn:supEmpProcTail}) is in general better. The parameters $\si$ and $K$ governing the tail behavior in (\ref{eqn:supEmpProcTail}) are the same in \cite{GeL12}. Note that \cite{GeL12} also contains a tail bound obtained by `generic chaining along a tree' (see Theorem 3 there). However, the parameters governing the tail behavior in the latter result still depend on the metric complexity of the index set $T$.

\subsection{Supremum of an average of squares}
\label{sec:avSquares}

We continue in the above setup, but now assume that the random variables $X_{t_i}:\Om_i\rightarrow\R$ are \emph{subgaussian} instead of subexponential. For every $t$ in $T$ we consider the average
\begin{equation}
\label{eqn:AtDef}
A_t = \frac{1}{m} \sum_{i=1}^m X_{t_i}^2 - \E X_{t_i}^2.
\end{equation}
Clearly, we can use Corollary~\ref{cor:supEmpProc} to find $L^p$-bounds for $\sup_{t\in T} A_t$. In this section, we will however look for a more natural bound involving a metric defined in terms of the $X_{t_i}$ instead of their squares. The main result, Theorem~\ref{thm:supAverages}, improves a result in this direction of Mendelson, Pajor and Tomczak-Jaegermann \cite{MPT07} (see Theorem~\ref{thm:MPT} and Corollary~\ref{cor:MPTImproved} for a detailed comparison). We consider the metric $d_{\psi_2}$ on $T$ defined by
\begin{equation*}
d_{\psi_2}(s,t) = \max_{i=1,\ldots,m} \|X_{s_i}-X_{t_i}\|_{\psi_2}.
\end{equation*}
We define the associated radius of $T$ by
$$\DelO_{\psi_2}(T) = \sup_{t\in T} \max_{i=1,\ldots,m} \|X_{t_i}\|_{\psi_2}$$
and usually write $\DelO_{\psi_2}$ instead of $\DelO_{\psi_2}(T)$ for brevity. Finally, we denote by $\mu_m$ the normalized counting measure on $\{1,\ldots,m\}$. With this notation,
$$\|X_t-X_s\|_{L^2(\mu_m)} = \Big(\frac{1}{m} \sum_{i=1}^m (X_{t_i}-X_{s_i})^2\Big)^{1/2}.$$
\begin{lemma}
\label{lem:HTail}
Let $s,t\in T$. For any $u\geq 1$,
\begin{equation}
\label{eqn:HSubg}
\bP\Big(\|X_t-X_s\|_{L^2(\mu_m)} \geq u2(1+\sqrt{2})d_{\psi_2}(s,t)\Big) \leq 2\exp(-mu^2).
\end{equation}
\end{lemma}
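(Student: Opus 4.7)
The plan is to reduce the claim to a Bernstein-type concentration bound for an average of independent subexponential variables. Write $\sigma := d_{\psi_2}(s,t)$ and $Y_i := X_{t_i} - X_{s_i}$. Since each $X_{t_i}$ is defined on the separate probability space $\Omega_i$, the $Y_i$ are independent, and by definition $\|Y_i\|_{\psi_2} \leq \sigma$ for every $i$. The quantity to control is
$$\|X_t - X_s\|_{L^2(\mu_m)}^2 = \frac{1}{m}\sum_{i=1}^m Y_i^2.$$

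First I would observe that by the Hölder-type inequality (\ref{eqn:C-SPsi}), each $Y_i^2$ is subexponential with $\|Y_i^2\|_{\psi_1} \leq \|Y_i\|_{\psi_2}^2 \leq \sigma^2$. Applying the triangle inequality for $\|\cdot\|_{\psi_1}$ (which is a norm since $\al=1$) and computing $\|\mathbf{1}\|_{\psi_1} = 1/\log 2$, the centered increments satisfy $\|Y_i^2 - \E Y_i^2\|_{\psi_1} \leq c\sigma^2$ for an absolute constant $c$. The $Y_i^2 - \E Y_i^2$ are therefore centered, independent, and subexponential, so Bernstein's inequality (\ref{eqn:BernsteinPsi1}) applies with $\nu \leq c\sigma^2$ and $\kappa \leq c\sigma^2$.

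Next I would run Bernstein with the substitution $v = mu^2$ (the natural choice given the desired tail $\exp(-mu^2)$). This yields
$$\bP\Bigl(\Bigl|\frac{1}{m}\sum_{i=1}^m (Y_i^2 - \E Y_i^2)\Bigr| \geq c_1\sigma^2 u + c_2\sigma^2 u^2\Bigr) \leq 2 e^{-mu^2}$$
for explicit numerical $c_1,c_2$. Combined with the standard estimate $\E Y_i^2 \leq \|Y_i\|_{\psi_2}^2 \leq \sigma^2$ (which follows from $e^t - 1 \geq t$ applied to the definition of $\|\cdot\|_{\psi_2}$), we obtain with probability at least $1-2e^{-mu^2}$ that
$$\|X_t - X_s\|_{L^2(\mu_m)}^2 \leq \sigma^2 + c_1 \sigma^2 u + c_2 \sigma^2 u^2.$$

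Finally I would use the restriction $u \geq 1$ to fold the three terms into a single multiple of $\sigma^2 u^2$, then take square roots. The target constant $2(1+\sqrt{2})$ arises because $(1+\sqrt{2})^2 = 3+2\sqrt{2}$, so the estimates should be arranged to yield a bound of the form $\sigma^2 u^2 (3 + 2\sqrt{2}) \cdot (\text{something} \leq 4)$ under the square root, which is where the factor $2$ enters. The main obstacle is the numerical bookkeeping: choosing the sharpest possible value of the centering constant $c$ (using that $\E Y_i^2 \leq \sigma^2$ and $\|\mathbf{1}\|_{\psi_1} = 1/\log 2$) so that after applying Bernstein and collapsing terms under $u \geq 1$, the final constant is no larger than $2(1+\sqrt{2})$. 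This is elementary but must be tracked carefully through each of the three estimates.
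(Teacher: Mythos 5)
Your argument is essentially the paper's own proof: apply Bernstein's inequality (\ref{eqn:BernsteinPsi1}) to the centered squares $Y_i^2 - \E Y_i^2$, bound $\nu$ and $\kappa$ via the $\psi_2$-H\"older inequality (\ref{eqn:C-SPsi}) and the centering, add back $\frac{1}{m}\sum_i \E Y_i^2 \leq d_{\psi_2}^2(s,t)$, and then use $u\geq 1$ to absorb the lower-order terms before taking a square root. One small remark on the bookkeeping you flag: the paper's estimate $\kappa \leq 2d_{\psi_2}^2(s,t)$ follows from $\|\E Z\|_{\psi_1}\leq\|Z\|_{\psi_1}$ via Jensen's inequality rather than the cruder $\|\E Z\|_{\psi_1}=\E Z/\log 2$ you invoke, but since there is slack (either value yields a final constant well below $2(1+\sqrt{2})$), your route reaches the claimed bound all the same.
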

\begin{proof}
Consider Bernstein's inequality (\ref{eqn:BernsteinPsi1}). If $u\geq m$ then, using that $\nu\leq \ka$, we have
$$\sqrt{2}\nu\sqrt{\frac{u}{m}} \leq \sqrt{2}\ka \frac{u}{m}.$$
Thus, for $u\geq m$,
$$\bP\Big(\Big|\frac{1}{m}\sum_{i=1}^m X_i\Big|\geq (1+\sqrt{2})\ka\frac{u}{m}\Big) \leq 2\exp(-u).$$
We apply this inequality for $X_i = (X_{t_i} - X_{s_i})^2 - \E(X_{t_i} - X_{s_i})^2$. Note that in this case
$$\ka = \max_{1\leq i\leq m} \|(X_{t_i} - X_{s_i})^2 - \E(X_{t_i} - X_{s_i})^2\|_{\psi_1} \leq 2\max_{1\leq i\leq m} \|X_{t_i}-X_{s_i}\|_{\psi_2}^2 = 2d_{\psi_2}^2(s,t).$$ Therefore, we find for any $u\geq 1$,
\begin{align*}
& \bP\Big(\frac{1}{m} \sum_{i=1}^m (X_{t_i} - X_{s_i})^2 - \frac{1}{m} \sum_{i=1}^m \E(X_{t_i} - X_{s_i})^2 \geq u 2(1+\sqrt{2}) d_{\psi_2}^2(s,t)\Big) \\
& \qquad \qquad \qquad \qquad \qquad \qquad \qquad \qquad \leq 2\exp(-mu).
\end{align*}
Since
$$\frac{1}{m} \sum_{i=1}^m \E(X_{t_i} - X_{s_i})^2 \leq \max_{i=1,\ldots,m}\|X_{t_i} - X_{s_i}\|_{\psi_2}^2 = d_{\psi_2}^2(s,t),$$
we deduce that
\begin{align*}
& \bP\Big(\|X_t-X_s\|_{L^2(\mu_m)} \geq u2(1+\sqrt{2})d_{\psi_2}(s,t)\Big) \\
& \qquad = \bP\Big(\frac{1}{m} \sum_{i=1}^m (X_{t_i} - X_{s_i})^2 - \frac{1}{m} \sum_{i=1}^m \E(X_{t_i} - X_{s_i})^2 \\
& \qquad \qquad \qquad \geq u^2 4(1+\sqrt{2})^2d_{\psi_2}^2(s,t) - \frac{1}{m} \sum_{i=1}^m \E(X_{t_i} - X_{s_i})^2\Big) \\
& \qquad \leq \bP\Big(\frac{1}{m} \sum_{i=1}^m (X_{t_i} - X_{s_i})^2 - \frac{1}{m} \sum_{i=1}^m \E(X_{t_i} - X_{s_i})^2 \\
& \qquad \qquad \qquad \geq 2(1+\sqrt{2})\Big(2(1+\sqrt{2})u^2-\frac{1}{2(1+\sqrt{2})}\Big)d_{\psi_2}^2(s,t)\Big) \\
& \qquad \leq 2\exp\Big(-m\Big(2(1+\sqrt{2})u^2-\frac{1}{2(1+\sqrt{2})}\Big)\Big) \leq 2\exp(-m u^2),
\end{align*}
if $u\geq 1$.
\end{proof}
The following two tail bounds will be used in the proof of our main result.
\begin{lemma}
\label{lem:subgSubexPart}
Let $s,t \in T$ and $n\in \N$. If $2^{n/2}\leq \sqrt{m}$ then for any $u\geq 1$,
$$\bP\Big(|A_t-A_s|\geq u2^{n/2}\frac{10\DelO_{\psi_2}}{\sqrt{m}}d_{\psi_2}(s,t)\Big)\leq 2\exp(-2^nu).$$
On the other hand, if $2^{n/2}\geq \sqrt{m}$, then for any $u\geq 1$,
$$\bP\Big(\|X_t-X_s\|_{L^2(\mu_m)} \geq \sqrt{u}2^{n/2}\frac{5}{\sqrt{m}}d_{\psi_2}(s,t)\Big)\leq 2\exp(-2^nu).$$
\end{lemma}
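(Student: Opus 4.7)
The two bounds correspond to two different regimes for how Bernstein's inequality for a centered sum of $\psi_1$-variables distributes its mass, and I plan to treat them separately.

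For the first estimate, I would apply the $\psi_1$-Bernstein inequality (\ref{eqn:BernsteinPsi1}) to the independent centered random variables
$$Y_i := (X_{t_i}^2 - X_{s_i}^2) - \E(X_{t_i}^2 - X_{s_i}^2), \qquad i=1,\ldots,m,$$
so that $A_t - A_s = \frac{1}{m}\sum_{i=1}^m Y_i$. Factoring $X_{t_i}^2-X_{s_i}^2 = (X_{t_i}-X_{s_i})(X_{t_i}+X_{s_i})$ and invoking the H\"{o}lder-type inequality (\ref{eqn:C-SPsi}) yields $\|Y_i\|_{\psi_1}\leq 4\,d_{\psi_2}(s,t)\,\DelO_{\psi_2}$, where the factor $4$ collects a factor $2$ for centering together with $\|X_{t_i}+X_{s_i}\|_{\psi_2}\leq 2\DelO_{\psi_2}$. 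Substituting $2^n u$ for the deviation parameter in (\ref{eqn:BernsteinPsi1}) produces a subgaussian contribution of order $\frac{d_{\psi_2}(s,t)\DelO_{\psi_2}}{\sqrt{m}}\,2^{n/2}\sqrt{u}$ and a subexponential contribution of order $\frac{d_{\psi_2}(s,t)\DelO_{\psi_2}}{m}\,2^n u$. Under the hypothesis $2^{n/2}\leq\sqrt{m}$ one has $2^n/m \leq 2^{n/2}/\sqrt{m}$, which puts the subexponential term in the same shape as the subgaussian one; combined with $\sqrt{u}\leq u$ (valid since $u\geq 1$) the two terms add to at most $(4\sqrt{2}+4)\,\frac{d_{\psi_2}(s,t)\DelO_{\psi_2}}{\sqrt{m}}\,2^{n/2} u \leq 10\,\frac{d_{\psi_2}(s,t)\DelO_{\psi_2}}{\sqrt{m}}\,2^{n/2} u$, as required.

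For the second estimate I would bypass $A_t-A_s$ entirely: when $2^{n/2}\geq\sqrt{m}$ the target radius scales as $\sqrt{u}$ rather than $u$, which matches the subgaussian shape of Lemma~\ref{lem:HTail} for the $L^2(\mu_m)$-norm. I plan to invoke (\ref{eqn:HSubg}) with the rescaled parameter $u':=\sqrt{2^n u/m}$, which satisfies $u'\geq 1$ because $2^n\geq m$ and $u\geq 1$. The resulting right-hand side is $2\exp(-m(u')^2) = 2\exp(-2^n u)$, while the radius becomes $2(1+\sqrt{2})u'\,d_{\psi_2}(s,t) \leq \frac{5}{\sqrt{m}}\,2^{n/2}\sqrt{u}\,d_{\psi_2}(s,t)$ after absorbing $2(1+\sqrt{2})\leq 5$. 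The only genuinely delicate bookkeeping is in Part~1: the Bernstein deviation parameter must be chosen as $2^n u$ (not $2^{n/2} u$) so that the exponential rate lands on $2^n u$, and it is precisely the regime hypothesis $2^n\leq m$ that allows the subexponential Bernstein term to be absorbed into the subgaussian one. Once this conversion is in place, both parts reduce directly to the two previously established tools.
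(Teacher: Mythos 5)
Your proposal is correct and follows essentially the same route as the paper's proof: the first estimate is obtained from the $\psi_1$-Bernstein inequality with the same $\psi_1$-norm computation and the same absorption of the subexponential term using $2^n/m\leq 2^{n/2}/\sqrt{m}$ and $\sqrt{u}\leq u$, while the second estimate rescales Lemma~\ref{lem:HTail} with exactly the substitution $u'=2^{n/2}\sqrt{u}/\sqrt{m}$ used in the paper. The numerical constants $4(\sqrt{2}+1)\leq 10$ and $2(1+\sqrt{2})\leq 5$ come out identically.
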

\begin{proof}
Suppose first that $2^{n/2}\leq \sqrt{m}$. We apply Bernstein's inequality (\ref{eqn:BernsteinPsi1}) with $X_i = X_{t_i}^2 - X_{s_i}^2 - \E(X_{t_i}^2 - X_{s_i}^2)$. Let us first estimate the deviation parameters. We use the H\"{o}lder type inequality (\ref{eqn:C-SPsi}) to obtain
\begin{align*}
\nu & = \Big(\frac{1}{m}\sum_{i=1}^m \|X_{t_i}^2 - X_{s_i}^2 - \E(X_{t_i}^2 - X_{s_i}^2)\|_{\psi_1}^2\Big)^{1/2} \\
& \leq 2\max_{1\leq i\leq m}\|X_{t_i}^2 - X_{s_i}^2\|_{\psi_1} \\
& \leq 2\max_{1\leq i\leq m}\|X_{t_i} - X_{s_i}\|_{\psi_2}\|X_{t_i}+X_{s_i}\|_{\psi_2} \\
& \leq 4\DelO_{\psi_2}d_{\psi_2}(s,t).
\end{align*}
Similarly, $\kappa\leq 4\DelO_{\psi_2}d_{\psi_2}(s,t)$. Thus, by (\ref{eqn:BernsteinPsi1}), for any $v\geq 0$,
$$\bP\Big(|A_t-A_s|\geq \sqrt{2v}\frac{4\DelO_{\psi_2}d_{\psi_2}(s,t)}{\sqrt{m}} + v\frac{4\DelO_{\psi_2}d_{\psi_2}(s,t)}{m}\Big)\leq 2\exp(-v).$$
Taking $v=2^nu$ yields
$$\bP\Big(|A_t-A_s|\geq \sqrt{2u}2^{n/2}\frac{4\DelO_{\psi_2}d_{\psi_2}(s,t)}{\sqrt{m}} + u2^n\frac{4\DelO_{\psi_2}d_{\psi_2}(s,t)}{m}\Big)\leq 2\exp(-2^nu).$$
Now observe that $2^{n/2}\leq \sqrt{m}$ implies that
$$2^n\frac{4\DelO_{\psi_2}d_{\psi_2}(s,t)}{m} \leq 2^{n/2}\frac{4\DelO_{\psi_2}d_{\psi_2}(s,t)}{\sqrt{m}}.$$
Therefore,
$$\bP\Big(|A_t-A_s|\geq (\sqrt{2u}+u)2^{n/2}\frac{4\DelO_{\psi_2}d_{\psi_2}(s,t)}{\sqrt{m}}\Big)\leq 2\exp(-2^nu).$$
By using that $u\geq 1$ we obtain the first assertion.\par
Suppose now that $2^{n/2}\geq \sqrt{m}$. Lemma~\ref{lem:HTail} implies that for any $v\geq 1$,
$$\bP\Big(\|X_t-X_s\|_{L^2(\mu_m)} \geq v5d_{\psi_2}(s,t)\Big) \leq 2\exp(-mv^2)$$
Let $v=2^{n/2}\sqrt{u}\frac{1}{\sqrt{m}}$, then $v\geq 1$ and therefore,
\begin{align*}
& \bP\Big(\|X_t-X_s\|_{L^2(\mu_m)} \geq \sqrt{u}2^{n/2}\frac{5}{\sqrt{m}}d_{\psi_2}(s,t)\Big) \\
& \qquad \qquad \leq 2\exp\Big(-m\Big(2^{n/2}\sqrt{u}\frac{1}{\sqrt{m}}\Big)^2\Big) = 2\exp(-2^nu).
\end{align*}
\end{proof}
We are now ready to prove the main result of this section.
\begin{theorem}
\label{thm:supAverages}
Let $(A_t)_{t\in T}$ be the process of averages defined in (\ref{eqn:AtDef}). Let $\si,K$ be constants satisfying
\begin{equation}
\label{eqn:assSupAverages}
\sup_{t\in T}\frac{1}{m}\sum_{i=1}^m \E|X_{t_i}^2 - \E X_{t_i}^2|^q\leq \frac{q!}{2}\si^2K^{q-2} \qquad (q=2,3,\ldots).
\end{equation}
For any $1\leq p<\infty$,
\begin{align*}
\Big(\E\sup_{t\in T} |A_t|^p\Big)^{\frac{1}{p}} & \lesssim \frac{1}{m}\ga_{2,p}^2(T,d_{\psi_2}) + \frac{1}{\sqrt{m}}\DelO_{\psi_2}(T)\ga_{2,p}(T,d_{\psi_2}) + \sqrt{p} \frac{\si}{\sqrt{m}} + p\frac{K}{m}.
\end{align*}
As a consequence, there are constants $c,C>0$ such that for all $u\geq 1$,
$$\bP\Big(\sup_{t\in T} |A_t| \geq C\Big(\frac{1}{m}\ga_2^2(T,d_{\psi_2}) + \frac{\DelO_{\psi_2}(T)}{\sqrt{m}}\ga_2(T,d_{\psi_2})\Big) + c\Big(\sqrt{u}\frac{\si}{\sqrt{m}} + u\frac{K}{m}\Big)\Big) \leq e^{-u}.$$
\end{theorem}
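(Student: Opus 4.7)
The plan is to run a generic-chaining argument along an optimal admissible sequence for $\ga_{2,p}(T,d_{\psi_2})$, but with the increments controlled by the \emph{two} regimes provided by Lemma~\ref{lem:subgSubexPart}. Fix $t_0\in T$, let $\cT=(T_n)_{n\ge 0}$ be an optimal admissible sequence for $\ga_{2,p}(T,d_{\psi_2})$ with projections $\pi_n(t)=\argmin_{s\in T_n}d_{\psi_2}(s,t)$, set $l=\lfloor\log_2 p\rfloor$, and let $n_1$ be the smallest integer with $2^{n_1/2}\ge\sqrt m$. I decompose
$$\sup_{t\in T}|A_t|\le |A_{t_0}|+\sup_{t\in T}|A_{\pi_l(t)}-A_{t_0}|+\sup_{t\in T}|A_t-A_{\pi_l(t)}|.$$
The first two terms are handled as in the proofs of Theorems~\ref{thm:chainsplitGeneral} and~\ref{thm:mixedTailTal}: assumption (\ref{eqn:assSupAverages}) gives, via Bernstein's inequality, a mixed tail bound for each individual $A_t-A_{t_0}$, so by Lemma~\ref{lem:supSmallSet} and Lemma~\ref{lem:TailsToMoments} they together contribute $\lesssim\sqrt p\,\si/\sqrt m+pK/m$. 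The real work lies in the chaining of the third term.

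For that I split the telescoping sum at the threshold level $n_1$:
$$A_t-A_{\pi_l(t)}=\sum_{l<n\le n_1}\bigl(A_{\pi_n(t)}-A_{\pi_{n-1}(t)}\bigr)+\bigl(A_t-A_{\pi_{n_1}(t)}\bigr).$$
In the small-scale block $l<n\le n_1$ we have $2^{n/2}\le\sqrt m$, so Case~1 of Lemma~\ref{lem:subgSubexPart}, combined with a union bound over the at most $2^{2^{n+1}}$ chaining pairs (Lemma~\ref{lem:unionBoundEst}), yields on an event of probability at least $1-c\exp(-pu/4)$,
$$\sum_{l<n\le n_1}|A_{\pi_n(t)}-A_{\pi_{n-1}(t)}|\le\frac{Cu\DelO_{\psi_2}}{\sqrt m}\sum_{n>l}2^{n/2}d_{\psi_2}(\pi_n(t),\pi_{n-1}(t))\le\frac{Cu\DelO_{\psi_2}\ga_{2,p}(T,d_{\psi_2})}{\sqrt m}.$$
In the large-scale block $n>n_1$ we have $2^{n/2}\ge\sqrt m$ and only the $L^2(\mu_m)$-bound of Case~2 is available; by the same union-bound argument,
$$\|X_t-X_{\pi_{n_1}(t)}\|_{L^2(\mu_m)}\le\sum_{n>n_1}\|X_{\pi_n(t)}-X_{\pi_{n-1}(t)}\|_{L^2(\mu_m)}\le\frac{C\sqrt u\,\ga_{2,p}(T,d_{\psi_2})}{\sqrt m}$$
on another high-probability event. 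To convert this into a bound on $|A_t-A_{\pi_{n_1}(t)}|$, I factor $X_{t_i}^2-X_{\pi_{n_1}(t)_i}^2=(X_{t_i}-X_{\pi_{n_1}(t)_i})(X_{t_i}+X_{\pi_{n_1}(t)_i})$ and apply Cauchy-Schwarz with respect to $\mu_m$ on both the random sum and its expectation. Using $d_{\psi_2}(t,\pi_{n_1}(t))\le\sum_{n>n_1}2^{-n/2}\cdot 2^{n/2}d_{\psi_2}(\pi_n(t),\pi_{n-1}(t))\lesssim\ga_{2,p}(T,d_{\psi_2})/\sqrt m$ to control the expectation part, this produces
$$|A_t-A_{\pi_{n_1}(t)}|\le 2\sup_{t'\in T}\|X_{t'}\|_{L^2(\mu_m)}\cdot\|X_t-X_{\pi_{n_1}(t)}\|_{L^2(\mu_m)}+\frac{C\DelO_{\psi_2}\ga_{2,p}(T,d_{\psi_2})}{\sqrt m}.$$

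The main obstacle is that the random factor $\sup_{t'}\|X_{t'}\|_{L^2(\mu_m)}$ is itself of the same order as the quantity we want to bound. I handle this by the pointwise identity $\|X_{t'}\|_{L^2(\mu_m)}^2=A_{t'}+\tfrac1m\sum_i\E X_{t'_i}^2\le\sup_{r\in T}|A_r|+C\DelO_{\psi_2}^2$, which gives $\sup_{t'}\|X_{t'}\|_{L^2(\mu_m)}\le\sqrt{\sup_r|A_r|}+C\DelO_{\psi_2}$. Taking $L^p$ norms via Lemma~\ref{lem:LpBdElem} in the bounds above and collecting all three pieces then produces a quadratic inequality of the form
$$(\E\sup_t|A_t|^p)^{1/p}\le C_1\frac{\DelO_{\psi_2}\ga_{2,p}}{\sqrt m}+C_2\frac{\ga_{2,p}}{\sqrt m}\bigl(\E\sup_t|A_t|^p\bigr)^{1/(2p)}+C_3\Bigl(\frac{\sqrt p\,\si}{\sqrt m}+\frac{pK}{m}\Bigr),$$
which by the elementary fact $x\le a\sqrt x+b\Rightarrow x\le 2a^2+2b$ yields the claimed moment bound, with the $\ga_{2,p}^2/m$ term emerging precisely from $(C_2\ga_{2,p}/\sqrt m)^2$. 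Finally, Lemma~\ref{lem:MomentsToTails} converts the moment bound into the tail inequality, and the individual Bernstein tail reproduces the $\sqrt u\,\si/\sqrt m+uK/m$ deviation parameters.
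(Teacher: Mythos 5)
Your plan is essentially the paper's: split $\N_{>l}$ at the threshold $2^{n/2}\asymp\sqrt m$, use the two regimes of Lemma~\ref{lem:subgSubexPart} with a union bound (Lemma~\ref{lem:unionBoundEst}), factor the square increments via Cauchy--Schwarz in $L^2(\mu_m)$, bound $\sup_{t'}\|X_{t'}\|_{L^2(\mu_m)}$ in terms of $\sup_r|A_r|$ and $\DelO_{\psi_2}$, and close the resulting quadratic self-bound in $\sup_t|A_t|^{1/2}$. Two small differences: you first telescope $\|X_t-X_{\pi_{n_1}(t)}\|_{L^2(\mu_m)}$ along the large scales and then factor once, whereas the paper factors each increment $A_{\pi_n(t)}-A_{\pi_{n-1}(t)}$ before summing; and your treatment of the expectation term via $d_{\psi_2}(t,\pi_{n_1}(t))\lesssim\ga_{2,p}/\sqrt m$ is actually cleaner than the paper's (which drags $\E\sup_t|A_t|^{1/2}$ through the pointwise quadratic). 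Both routes lead to the same moment bound.

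There are two points you should tighten. First, the step ``Taking $L^p$ norms via Lemma~\ref{lem:LpBdElem} in the bounds above'' is glossed over: on $\Om_{u,p}$ the pointwise bound still has the random factor $\sqrt{\sup_r|A_r|}$ on the right-hand side, so Lemma~\ref{lem:LpBdElem} cannot be applied directly. You must first solve the pointwise quadratic in $\sup_t|A_t|^{1/2}$ (conditionally on $\Om_{u,p}$), producing a bound of the form $\sup_t|A_t|^{1/2}-\sup_t|A_{\pi_l(t)}|^{1/2}\le C\sqrt u\,(\cdot)$, and only then apply Lemma~\ref{lem:LpBdElem} to this nonnegative random variable. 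Second, your base decomposition introduces $A_{t_0}$; since assumption (\ref{eqn:assSupAverages}) controls $A_t$ itself (not $A_t-A_{t_0}$), it is simpler to write $\sup_t|A_t|\le\sup_t|A_{\pi_l(t)}|+\sup_t|A_t-A_{\pi_l(t)}|$ directly (as the paper does). This also avoids a problem in the previous point: one needs the subtracted baseline inside the square root to be dominated by $\sup_t|A_t|$ so that the difference of square roots is nonnegative (a hypothesis of Lemma~\ref{lem:LpBdElem}), which holds for $\sup_t|A_{\pi_l(t)}|$ but not obviously for $\sup_t|A_{\pi_l(t)}-A_{t_0}|+|A_{t_0}|$.
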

Note that we can always take the parameters
\begin{align*}
\si & = \sup_{t\in T} \Big(\frac{1}{m}\sum_{i=1}^m \|X_{t_i}\|_{\psi_2}^4\Big)^{1/2} \\
K & = \sup_{t\in T} \max_{1\leq i\leq m} \|X_i\|_{\psi_2}^2.
\end{align*}
\begin{proof}
We again write $\DelO_{\psi_2}:=\DelO_{\psi_2}(T)$ for brevity. Set $l=\lfloor\log_2(p)\rfloor$. Let $\cT$ be an optimal admissible sequence for $\ga_{2,p}(T,d_{\psi_2})$ and let $\pi_n(t) = \argmin_{s\in T_n}d_{\psi_2}(s,t)$. We divide $\N_{>l}$ into two disjoint parts given by
\begin{align*}
I_{\mathrm{subg}} = \Big\{n \in \N_{>l} \ : \ 2^{n/2} \leq \sqrt{m}\Big\}, \qquad I_{\mathrm{subex}} = \Big\{n \in \N_{>l} \ : \ 2^{n/2} > \sqrt{m}\Big\}.
\end{align*}
We write the telescoping sum
\begin{equation}
\label{eqn:AtSplit}
A_t = \sum_{n\in I_{\mathrm{subg}}} A_{\pi_n(t)} - A_{\pi_{n-1}(t)} + \sum_{n\in I_{\mathrm{subex}}} A_{\pi_n(t)} - A_{\pi_{n-1}(t)} + A_{\pi_l(t)}.
\end{equation}
By Lemma~\ref{lem:subgSubexPart}, if $n \in I_{\mathrm{subg}}$ then for all $t\in T$ and $u\geq 1$,
$$\bP\Big(|A_{\pi_n(t)} - A_{\pi_{n-1}(t)}| \geq u\frac{10\DelO_{\psi_2}}{\sqrt{m}}2^{n/2}d_{\psi_2}(\pi_n(t),\pi_{n-1}(t))\Big) \leq 2\exp(-2^nu),$$
whereas if $n \in I_{\mathrm{subex}}$, then for all $u\geq 1$,
$$\bP\Big(\|X_{\pi_n(t)} - X_{\pi_{n-1}}(t)\|_{L^2(\mu_m)} \geq \sqrt{u}\frac{5}{\sqrt{m}}2^{n/2}d_{\psi_2}(\pi_n(t),\pi_{n-1}(t))\Big) \leq 2\exp(-2^nu).$$
Let $\Om_{u,p}$ be the event
\begin{align*}
& \forall n \in I_{\mathrm{subg}}, \ t\in T \ : \ |A_{\pi_n(t)} - A_{\pi_{n-1}(t)}| \leq u\frac{10\DelO_{\psi_2}}{\sqrt{m}}2^{n/2}d_{\psi_2}(\pi_n(t),\pi_{n-1}(t)), \\
& \forall n \in I_{\mathrm{subex}}, \ t\in T \ : \ \|X_{\pi_n(t)} - X_{\pi_{n-1}}(t)\|_{L^2(\mu_m)} \leq \sqrt{u}\frac{5}{\sqrt{m}}2^{n/2}d_{\psi_2}(\pi_n(t),\pi_{n-1}(t)).
\end{align*}
Since for any $n>l$ the number of pairs $(\pi_n(t),\pi_{n-1}(t))$ is bounded by $|T_n| \ |T_{n-1}|\leq 2^{2^n}2^{2^{n-1}}\leq 2^{2^{n+1}}$, Lemma~\ref{lem:unionBoundEst} implies that there is an absolute constant $c>0$ such that if $u\geq 2$,
$$\bP(\Om_{u,p}^c) \leq c \exp(-p u/4).$$
If the event $\Om_{u,p}$ occurs, then for any given $t\in T$,
\begin{align*}
\Big|\sum_{n\in I_{\mathrm{subg}}} A_{\pi_n(t)} - A_{\pi_{n-1}(t)}\Big| & \leq u\frac{10\DelO_{\psi_2}}{\sqrt{m}}\sum_{n\in I_{\mathrm{subg}}} 2^{n/2}d_{\psi_2}(\pi_n(t),\pi_{n-1}(t)) \\
& \leq u\frac{10(1+\sqrt{2})\DelO_{\psi_2}}{\sqrt{m}}\ga_{2,p}(T,d_{\psi_2}).
\end{align*}
For the subexponential part we write
\begin{align}
\label{eqn:subexSplit}
& \Big|\sum_{n\in I_{\mathrm{subex}}} A_{\pi_n(t)} - A_{\pi_{n-1}(t)}\Big| \nonumber \\
& \qquad = \Big|\sum_{n\in I_{\mathrm{subex}}} \frac{1}{m} \sum_{i=1}^m X_{\pi_n(t)_i}^2 - X_{\pi_{n-1}(t)_i}^2 - \E\Big(\frac{1}{m} \sum_{i=1}^m X_{\pi_n(t)_i}^2 - X_{\pi_{n-1}(t)_i}^2\Big)\Big|.
\end{align}
By the Cauchy-Schwarz inequality in $L^2(\mu_m)$, we find
\begin{align*}
& \frac{1}{m} \sum_{i=1}^m X_{\pi_n(t)_i}^2 - X_{\pi_{n-1}(t)_i}^2 \\
& \qquad = \frac{1}{m} \sum_{i=1}^m \Big(X_{\pi_n(t)_i} - X_{\pi_{n-1}(t)_i}\Big)\Big(X_{\pi_n(t)_i} + X_{\pi_{n-1}(t)_i}\Big) \\
& \qquad \leq \Big(\frac{1}{m} \sum_{i=1}^m \Big(X_{\pi_n(t)_i} - X_{\pi_{n-1}(t)_i}\Big)^2\Big)^{1/2} \Big(\frac{1}{m} \sum_{i=1}^m \Big(X_{\pi_n(t)_i} + X_{\pi_{n-1}(t)_i}\Big)^2\Big)^{1/2} \\
& \qquad \leq \|X_{\pi_n(t)} - X_{\pi_{n-1}(t)}\|_{L^2(\mu_m)} \Big(\Big(A_{\pi_n(t)} + \frac{1}{m} \sum_{i=1}^m \E X_{\pi_n(t)_i}^2\Big)^{1/2} \\
& \qquad \qquad \qquad \qquad \qquad \qquad \qquad \qquad + \Big(A_{\pi_{n-1}(t)} + \frac{1}{m} \sum_{i=1}^m \E X_{\pi_{n-1}(t)_i}^2\Big)^{1/2}\Big) \\
& \qquad \leq 2\Big(\sup_{t\in T}|A_t| + \DelO_{\psi_2}^2\Big)^{1/2}\sqrt{u}2^{n/2}\frac{5}{\sqrt{m}}d_{\psi_2}(\pi_n(t),\pi_{n-1}(t)) \\
& \qquad \leq \Big(\sup_{t\in T}|A_t|^{1/2} + \DelO_{\psi_2}\Big)\sqrt{u}2^{n/2}\frac{10}{\sqrt{m}}d_{\psi_2}(\pi_n(t),\pi_{n-1}(t)).
\end{align*}
We now apply this estimate in (\ref{eqn:subexSplit}) and find
\begin{align*}
& \Big|\sum_{n\in I_{\mathrm{subex}}} A_{\pi_n(t)} - A_{\pi_{n-1}(t)}\Big| \\
& \ \leq \Big(\sup_{t\in T}|A_t|^{1/2} + \E\Big(\sup_{t\in T}|A_t|^{1/2}\Big) + 2\DelO_{\psi_2}\Big)\sqrt{u}\frac{10}{\sqrt{m}}\sum_{n\in I_{\mathrm{subex}}}2^{n/2}d_{\psi_2}(\pi_n(t),\pi_{n-1}(t)) \\
& \ \leq \Big(\sup_{t\in T}|A_t|^{1/2} + \E\Big(\sup_{t\in T}|A_t|^{1/2}\Big) + 2\DelO_{\psi_2}\Big)\sqrt{u}\frac{10(1+\sqrt{2})}{\sqrt{m}}\ga_{2,p}(T,d_{\psi_2}).
\end{align*}
In conclusion, if $\Om_{u,p}$ occurs then we find using (\ref{eqn:AtSplit})
\begin{align*}
\sup_{t \in T} |A_t| & \leq \sqrt{u}\frac{10+10\sqrt{2}}{\sqrt{m}}\ga_{2,p}(T,d_{\psi_2})\Big(\sup_{t\in T}|A_t|^{1/2} + \E\Big(\sup_{t\in T}|A_t|^{1/2}\Big)\Big) \\
& \qquad + u\frac{(30+30\sqrt{2})\DelO_{\psi_2}}{\sqrt{m}}\ga_{2,p}(T,d_{\psi_2}) + \sup_{t\in T} |A_{\pi_l(t)}|,
\end{align*}
which is a quadratic inequality in $\sup_{t\in T}|A_t|^{1/2}$. By solving this inequality, we obtain
\begin{align*}
\sup_{t\in T}|A_t|^{1/2} & \leq \sqrt{u}\frac{25}{\sqrt{m}}\ga_{2,p}(T,d_{\psi_2}) + \Big(\sqrt{u}\frac{25}{\sqrt{m}}\ga_{2,p}(T,d_{\psi_2})\E\Big(\sup_{t\in T}|A_t|^{1/2}\Big) \\
& \qquad \qquad \qquad + u\frac{75\DelO_{\psi_2}}{\sqrt{m}}\ga_{2,p}(T,d_{\psi_2}) + \sup_{t\in T} |A_{\pi_l(t)}|\Big)^{1/2},
\end{align*}
which implies that
\begin{align*}
&\sup_{t\in T}|A_t|^{1/2} - \sup_{t\in T} |A_{\pi_l(t)}|^{1/2} \\
& \leq \sqrt{u}\frac{25}{\sqrt{m}}\ga_{2,p}(T,d_{\psi_2}) + \sqrt{u}\Big(\Big(25\E\Big(\sup_{t\in T}|A_t|^{1/2}\Big) + 75\DelO_{\psi_2}\Big)\frac{1}{\sqrt{m}}\ga_{2,p}(T,d_{\psi_2})\Big)^{1/2}
\end{align*}
In conclusion, if $u\geq 2$, then
\begin{align*}
& \bP\Big(\sup_{t\in T}|A_t|^{1/2} - \sup_{t\in T} |A_{\pi_l(t)}|^{1/2} \\
& \ \ \ \ \geq \sqrt{u}\Big(\frac{25}{\sqrt{m}}\ga_{2,p}(T,d_{\psi_2}) + \Big(\Big(25\E\Big(\sup_{t\in T}|A_t|^{1/2}\Big)+ 75\DelO_{\psi_2}\Big)\frac{1}{\sqrt{m}}\ga_{2,p}(T,d_{\psi_2})\Big)^{1/2}\Big)\Big) \\
& \ \ \ \ \qquad \qquad \qquad \qquad \qquad \leq c \exp(-pu/4).
\end{align*}
Since the random variable
$$\sup_{t\in T}|A_t|^{1/2} - \sup_{t\in T} |A_{\pi_l(t)}|^{1/2}$$
is clearly positive, we can now apply Lemma~\ref{lem:LpBdElem} (with $\al=2$) to obtain
\begin{align*}
& \Big(\E\Big(\sup_{t\in T}|A_t|^{1/2} - \sup_{t\in T} |A_{\pi_l(t)}|^{1/2}\Big)^p\Big)^{\frac{1}{p}} \\
& \qquad \leq C\Big(\frac{25}{\sqrt{m}}\ga_{2,p}(T,d_{\psi_2}) + \Big(\Big(25\E\Big(\sup_{t\in T}|A_t|^{1/2}\Big) + 75\DelO_{\psi_2}\Big)\frac{1}{\sqrt{m}}\ga_{2,p}(T,d_{\psi_2})\Big)^{1/2}\Big).
\end{align*}
We use the triangle inequality and the trivial bound
$$\E\sup_{t\in T}|A_t|^{1/2}\leq \Big(\E\sup_{t\in T}|A_t|^{p/2}\Big)^{1/p}$$
to write
\begin{align*}
& \Big(\E\sup_{t\in T}|A_t|^{p/2}\Big)^{1/p} \\
& \qquad \leq C\frac{25}{\sqrt{m}}\ga_{2,p}(T,d_{\psi_2}) + C\Big(\frac{75\DelO_{\psi_2}}{\sqrt{m}}\ga_{2,p}(T,d_{\psi_2})\Big)^{1/2} \\
& \qquad \qquad + C\Big(\frac{25}{\sqrt{m}}\ga_{2,p}(T,d_{\psi_2})\Big)^{1/2}\Big(\E\sup_{t\in T}|A_t|^{p/2}\Big)^{1/2p} + \Big(\E\sup_{t\in T} |A_{\pi_l(t)}|^{p/2}\Big)^{1/p}.
\end{align*}
This is a quadratic inequality in $\Big(\E\sup_{t\in T}|A_t|^{p/2}\Big)^{1/2p}$. By solving it and subsequently raising both sides to the fourth power, we arrive at
$$\Big(\E\sup_{t\in T}|A_t|^{p/2}\Big)^{2/p} \lesssim \frac{1}{m}\ga_{2,p}^2(T,d_{\psi_2}) + \frac{\DelO_{\psi_2}}{\sqrt{m}}\ga_{2,p}(T,d_{\psi_2}) + \Big(\E\sup_{t\in T} |A_{\pi_l(t)}|^{p/2}\Big)^{2/p}.$$
Finally, we use Lemma~\ref{lem:supSmallSet}, Bernstein's inequality (\ref{eqn:Bernstein}) and our assumption (\ref{eqn:assSupAverages}) to obtain
\begin{equation*}
\Big(\E\sup_{t\in T} |A_{\pi_l(t)}|^{p/2}\Big)^{2/p} \leq 4\sup_{t\in T}\Big(\E |A_{t}|^{p/2}\Big)^{2/p} \lesssim \sqrt{p} \frac{\si}{\sqrt{m}} + p\frac{K}{m}.
\end{equation*}
This completes the proof.
\end{proof}
Let us now compare Theorem~\ref{thm:supAverages} with \cite[Corollary 1.9]{MPT07}. We consider the following situation. Let $X_1,\ldots,X_m$ be independent copies of a random variable $X:\Om\rightarrow \Theta$, where $\Theta$ is a measurable space. Let $\mu_X$ denote the probability distribution of $X$. Suppose that $\cF$ is a set of real-valued measurable functions on $\Theta$ and consider the process $(Z(f))_{f\in \cF}$ defined by
$$Z(f) = \Big|\frac{1}{m} \sum_{i=1}^m (f^2(X_i) - \E f^2(X_i))\Big|.$$
\begin{theorem}
\label{thm:MPT}
\cite{MPT07} There exist absolute constants $C,c$ such that the following holds. If $\|f\|_{L^2(\mu_X)}=1$ for all $f\in\cF$, then with probability at least
$$1-\exp\Big(-c\min\Big(m,\ga_2^2(\cF,d_{\psi_2})/\DelO_{\psi_2}^2(\cF)\Big)\Big)$$
we have
$$\sup_{f \in \cF} Z(f) \leq C \DelO_{\psi_2}(\cF)\Big(\frac{1}{m}\ga_{2}^2(\cF,d_{\psi_2}) + \frac{1}{\sqrt{m}}\ga_{2}(\cF,d_{\psi_2})\Big).$$
Moreover, if $\cF$ is symmetric, then
$$\E\sup_{f \in \cF} Z(f) \leq C \DelO_{\psi_2}(\cF)\Big(\frac{1}{m}\ga_{2}^2(\cF,d_{\psi_2}) + \frac{1}{\sqrt{m}}\ga_{2}(\cF,d_{\psi_2})\Big).$$
\end{theorem}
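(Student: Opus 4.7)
The plan is to imitate the chaining scheme used in the proofs of Theorems~\ref{thm:chainsplitGeneral} and \ref{thm:mixedTailTal}, but to split the chain into two regimes dictated by Lemma~\ref{lem:subgSubexPart}. Concretely, I would fix an optimal admissible sequence $\cT=(T_n)_{n\geq 0}$ for $\ga_{2,p}(T,d_{\psi_2})$ with projections $\pi_n(t)=\argmin_{s\in T_n} d_{\psi_2}(s,t)$, set $l=\lfloor\log_2 p\rfloor$, and partition $\N_{>l}$ into
$$I_{\mathrm{subg}}=\{n>l:2^{n/2}\leq\sqrt{m}\}, \qquad I_{\mathrm{subex}}=\{n>l:2^{n/2}>\sqrt{m}\}.$$
The telescoping decomposition $A_t = A_{\pi_l(t)} + \sum_{n\in I_{\mathrm{subg}}}(A_{\pi_n(t)}-A_{\pi_{n-1}(t)}) + \sum_{n\in I_{\mathrm{subex}}}(A_{\pi_n(t)}-A_{\pi_{n-1}(t)})$ reduces the problem to estimating the two regimes and the residual term.

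In the subgaussian regime the first clause of Lemma~\ref{lem:subgSubexPart} gives a per-link tail bound on $|A_{\pi_n(t)}-A_{\pi_{n-1}(t)}|$ with scale $\DelO_{\psi_2}\, d_{\psi_2}(\pi_n(t),\pi_{n-1}(t))/\sqrt{m}$ at level $2^{n/2}u$. A union bound via Lemma~\ref{lem:unionBoundEst} over the $\leq 2^{2^{n+1}}$ admissible pairs then produces a single event $\Om_{u,p}$ of probability $\geq 1-ce^{-pu/4}$ (for $u\geq 2$) on which $\sum_{n\in I_{\mathrm{subg}}} |A_{\pi_n(t)}-A_{\pi_{n-1}(t)}| \lesssim u\, \DelO_{\psi_2}\, m^{-1/2}\,\ga_{2,p}(T,d_{\psi_2})$. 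In the subexponential regime the second clause of Lemma~\ref{lem:subgSubexPart} only controls $\|X_{\pi_n(t)}-X_{\pi_{n-1}(t)}\|_{L^2(\mu_m)}$ rather than the squared increment itself, so I would use the factorisation $X_{\pi_n(t)_i}^2 - X_{\pi_{n-1}(t)_i}^2 = (X_{\pi_n(t)_i}-X_{\pi_{n-1}(t)_i})(X_{\pi_n(t)_i}+X_{\pi_{n-1}(t)_i})$ together with Cauchy--Schwarz in $L^2(\mu_m)$. The sum factor is bounded by $2(\sup_{t\in T}|A_t|+\DelO_{\psi_2}^2)^{1/2}$, since $\frac{1}{m}\sum_i X_{t_i}^2 \leq |A_t|+\DelO_{\psi_2}^2$. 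Centring and then summing yields, on $\Om_{u,p}$, an estimate of the form $\sum_{n\in I_{\mathrm{subex}}}|A_{\pi_n(t)}-A_{\pi_{n-1}(t)}| \lesssim \sqrt{u}\,\bigl(\sup_{t\in T}|A_t|^{1/2}+\E\sup_{t\in T}|A_t|^{1/2}+\DelO_{\psi_2}\bigr)\,m^{-1/2}\,\ga_{2,p}(T,d_{\psi_2})$.

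Combining the two regimes produces a self-bounding quadratic inequality for $\sup_{t\in T}|A_t|^{1/2}$. Solving it for each $\om\in\Om_{u,p}$ isolates $\sup_t|A_t|^{1/2}$ and converts the cross term into an additive contribution of order $\sqrt{u}\,(\DelO_{\psi_2}\,m^{-1/2}\,\ga_{2,p})^{1/2}$ plus the pure chaining piece $\sqrt{u}\,m^{-1/2}\,\ga_{2,p}$. Converting the resulting tail estimate into an $L^{p/2}$-bound for $\sup_t|A_t|^{1/2}-\sup_t|A_{\pi_l(t)}|^{1/2}$ via Lemma~\ref{lem:LpBdElem} (with $\al=2$), and then solving a second quadratic inequality at the level of $L^{p/2}$-norms, produces the chaining contributions $m^{-1}\,\ga_{2,p}^2 + m^{-1/2}\,\DelO_{\psi_2}\,\ga_{2,p}$. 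For the residual $\sup_{t\in T}|A_{\pi_l(t)}|$, Lemma~\ref{lem:supSmallSet} reduces the supremum to a pointwise bound, after which Bernstein's inequality~(\ref{eqn:Bernstein}) under the moment hypothesis~(\ref{eqn:assSupAverages}), combined with Lemma~\ref{lem:TailsToMoments}, yields the Bernstein-type contribution $\sqrt{p}\,\si/\sqrt{m}+pK/m$. The tail statement then follows from the moment bound via Lemma~\ref{lem:MomentsToTails}.

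The main obstacle is the self-bounding step in the subexponential regime: one must carefully isolate $\sup_t|A_t|^{1/2}$ while absorbing both the pathwise cross term and the extra $\E\sup_t|A_t|^{1/2}$ produced by the centring of $A_{\pi_n(t)}-A_{\pi_{n-1}(t)}$, and one must solve two nested quadratics (pathwise, then in $L^{p/2}$) without degrading the scaling to $\DelO_{\psi_2}^2/\sqrt{m}$. Tracking that the subexponential tail of the squared increment is nevertheless absorbed by the \emph{same} $\ga_{2,p}$-functional computed with $d_{\psi_2}$ (rather than producing a weaker $\ga_1$ term) is precisely what forces us to chain the $L^2(\mu_m)$ norm of $X_t-X_s$ in this regime instead of chaining $|A_t-A_s|$ directly.
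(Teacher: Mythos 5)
The statement you were asked to prove, Theorem~\ref{thm:MPT}, is not proved in the paper at all: it is quoted from \cite{MPT07} purely for comparison, and the paper then establishes the stronger Theorem~\ref{thm:supAverages} (and hence Corollary~\ref{cor:MPTImproved}) via chaining. What you have written is, in all essentials, a faithful reconstruction of the paper's proof of Theorem~\ref{thm:supAverages}: the split of the chain at $2^{n/2}\lessgtr\sqrt m$ into subgaussian and subexponential regimes governed by Lemma~\ref{lem:subgSubexPart}; chaining $|A_{\pi_n(t)}-A_{\pi_{n-1}(t)}|$ directly in the first regime and $\|X_{\pi_n(t)}-X_{\pi_{n-1}(t)}\|_{L^2(\mu_m)}$ in the second, with Cauchy--Schwarz to factor the squared increment; the pathwise quadratic self-bound in $\sup_t|A_t|^{1/2}$ on the good event, followed by a second quadratic at the level of $L^{p/2}$ norms; and the residual $\sup_t|A_{\pi_l(t)}|$ handled via Lemma~\ref{lem:supSmallSet}, Bernstein, and Lemma~\ref{lem:TailsToMoments}. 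That is exactly the paper's argument, and your identification of the main obstacle (keeping the subexponential regime from spawning a $\ga_1$-functional by chaining the $L^2(\mu_m)$ norm instead) is also the paper's key observation.

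Two remarks on what is missing relative to the stated goal. First, to actually recover Theorem~\ref{thm:MPT} from the $L^p$ bound you sketch, one still needs the final specialization step: take $\si,K\le\DelO_{\psi_2}^2$, apply the derived tail bound at $u=\min(m,\ga_2^2/\DelO_{\psi_2}^2)$, and use $\DelO_{\psi_2}\ge1$ (which follows from the normalization $\|f\|_{L^2(\mu_X)}=1$) to absorb $\ga_2^2/m$ into $\DelO_{\psi_2}\ga_2^2/m$; for the expectation bound, symmetry of $\cF$ gives $\ga_2\gtrsim\DelO_{\psi_2}$ so the remaining $\DelO_{\psi_2}^2/\sqrt m$ and $\DelO_{\psi_2}^2/m$ terms are absorbed likewise. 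You do not spell this out, though it is routine. Second, the original proof in \cite{MPT07} is via a different route (a direct chaining argument combined with a concentration step), so your proposal does not reconstruct \emph{that} proof; rather, it reconstructs the paper's generic-chaining improvement, which simultaneously yields all $L^p$ moments, avoids a separate concentration inequality, handles non-i.i.d.\ and non-normalized $\cF$, and produces the sharper Bernstein deviation $\sqrt{u}\si/\sqrt m + uK/m$.
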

Theorem~\ref{thm:supAverages} improves this result in several respects: we can assume the $X_i$ to be only independent instead of i.i.d., we do not need to assume that $\cF$ lies on the $L^2(\mu_X)$-sphere and, most importantly, we get a better deviation inequality.
\begin{corollary}
\label{cor:MPTImproved}
There exist constants $c,C>0$ such that the following holds. Let $X_i:\Om\rightarrow \Theta$, $1\leq i\leq m$ be independent random variables and let $\cF$ be a set of real-valued measurable functions on $\Theta$. Suppose that $\si,K$ are such that
$$\sup_{f\in \cF}\frac{1}{m}\sum_{i=1}^m \E|f^2(X_i)-\E f^2(X_i)|^q\leq \frac{q!}{2}\si^2 K^{q-2} \qquad (q=2,3,\ldots).$$
Then, for any $u\geq 1$,
$$\bP\Big(\sup_{f\in \cF} Z(f)\geq C\Big(\frac{1}{m}\ga_2^2(\cF,d_{\psi_2}) + \frac{\DelO_{\psi_2}(\cF)}{\sqrt{m}}\ga_2(\cF,d_{\psi_2})\Big) + c\Big(\sqrt{u}\frac{\si}{\sqrt{m}} + u\frac{K}{m}\Big)\Big) \leq e^{-u}.$$
\end{corollary}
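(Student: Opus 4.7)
The plan is to deduce this corollary directly from Theorem~\ref{thm:supAverages} by a straightforward re-indexing of the setup. Specifically, I would take $T = \cF$ and, for every $f \in \cF$ and $1 \leq i \leq m$, let $\Om_i$ be the probability space on which $X_i$ is defined and set $X_{f,i} := f(X_i)$. Then the process defined in (\ref{eqn:AtDef}) becomes
$$A_f = \frac{1}{m}\sum_{i=1}^m \bigl(f^2(X_i) - \E f^2(X_i)\bigr),$$
so that $Z(f) = |A_f|$ and $\sup_{f\in\cF} Z(f) = \sup_{f\in\cF} |A_f|$, which is precisely the quantity Theorem~\ref{thm:supAverages} bounds.

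Next I would check that the data match up. The metric inherited from Theorem~\ref{thm:supAverages} under this identification is
$$d_{\psi_2}(f,g) = \max_{1\leq i\leq m} \|f(X_i) - g(X_i)\|_{\psi_2},$$
and the associated radius $\DelO_{\psi_2}(\cF) = \sup_{f\in\cF}\max_{i} \|f(X_i)\|_{\psi_2}$, which agree with the quantities appearing in the corollary. The moment hypothesis
$$\sup_{f\in \cF}\frac{1}{m}\sum_{i=1}^m \E|f^2(X_i)-\E f^2(X_i)|^q\leq \frac{q!}{2}\si^2 K^{q-2}$$
is exactly the Bernstein-type condition (\ref{eqn:assSupAverages}) required in Theorem~\ref{thm:supAverages}, with the same $\si$ and $K$.

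With these identifications the tail bound of Theorem~\ref{thm:supAverages} transfers verbatim to give
$$\bP\Big(\sup_{f\in\cF}|A_f| \geq C\Big(\tfrac{1}{m}\ga_2^2(\cF,d_{\psi_2}) + \tfrac{\DelO_{\psi_2}(\cF)}{\sqrt{m}}\ga_2(\cF,d_{\psi_2})\Big) + c\Big(\sqrt{u}\tfrac{\si}{\sqrt{m}} + u\tfrac{K}{m}\Big)\Big) \leq e^{-u}$$
for all $u \geq 1$, which is the desired estimate. There is no genuine obstacle in this argument: the only thing to be careful about is the bookkeeping, in particular verifying that Theorem~\ref{thm:supAverages} does not require the underlying $X_i$ to be identically distributed (it only uses the independence of the coordinates $X_{t_i}$ through independent $\Om_i$), so the improvement over Theorem~\ref{thm:MPT}, where $X_1,\ldots,X_m$ are i.i.d., is built into the more general formulation.
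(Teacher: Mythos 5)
Your proof is correct and matches the paper's (implicit) argument: the corollary is simply Theorem~\ref{thm:supAverages} specialized to the index set $T=\cF$ with $X_{f,i}=f(X_i)$, and the paper states it as a direct consequence without further argument. Your bookkeeping of the metric, radius, and Bernstein-type hypothesis is precisely the required identification.
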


\section{Supremum of a second order chaos process}
\label{sec:chaos}

In this section we will make use of the Schatten spaces. For any $m\ti n$ matrix $A$ with complex entries $A_{ij}$ we use
$$\|A\|_{S^q} = (\Tr(A^*A)^{q/2})^{1/q} \qquad (1\leq q<\infty), \qquad \|A\|_{S^{\infty}} = \|A\|_{l^2_n\to l^2_m}$$
to denote the Schatten norms of $A$. We use
$$d_q(A_1,A_2) = \|A_1-A_2\|_{S^q}$$
to denote the associated metrics on the $m\ti n$ matrices. Accordingly, for any set $\cA$ of $m\ti n$ matrices and $1\leq q\leq\infty$ we define the radius
$$\DelO_{q}(\cA) = \sup_{A \in \cA} \|A\|_{S^q}.$$
Let $\xi$ be an $n$-dimensional random vector. For any $n\ti n$ matrix $B$ we define the associated \emph{second order chaos} by
$$C_B(\xi) = \xi^* B\xi - \E(\xi^* B\xi) = \sum_{i,j=1}^n B_{ij}(\xi_i\overline{\xi}_j-\E(\xi_i\overline{\xi}_j)).$$
The tail behavior of $C_B(\xi)$ in the case that $\xi$ has subgaussian components was described by Hanson and Wright \cite{HaW71}, see also \cite{RuV13} for a modern proof.
\begin{theorem}
Suppose that $\xi_1,\ldots,\xi_n$ are independent, mean-zero, real-valued random variables and $\max_i\|\xi_i\|_{\psi_2}\leq 1$. Then, there is a universal constant $c>0$ such that for any $u\geq 0$,
\begin{equation}
\label{eqn:H-W}
\bP(|C_B(\xi)|\geq u) \leq 2\exp\Big(-c\min\Big(\frac{u^2}{\|B\|_{S^2}^2},\frac{u}{\|B\|_{S^{\infty}}}\Big)\Big).
\end{equation}
\end{theorem}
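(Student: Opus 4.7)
The plan is to follow the classical Hanson--Wright strategy, splitting the chaos into its diagonal and off-diagonal pieces and analyzing them separately. Write
\begin{equation*}
C_B(\xi) = \sum_{i=1}^n B_{ii}(\xi_i^2 - \E\xi_i^2) + \sum_{i\neq j} B_{ij}\xi_i\xi_j =: D + S.
\end{equation*}
Since $\bP(|C_B(\xi)|\geq u)\leq \bP(|D|\geq u/2) + \bP(|S|\geq u/2)$, it suffices to obtain the stated mixed subgaussian--subexponential tail for each term, using $\sum_i B_{ii}^2\leq \|B\|_{S^2}^2$ and $\max_i|B_{ii}|\leq \|B\|_{S^\infty}$.

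For the diagonal term $D$, the hypothesis $\|\xi_i\|_{\psi_2}\leq 1$ combined with the H\"older-type inequality \eqref{eqn:C-SPsi} (or an elementary computation) gives $\|\xi_i^2-\E\xi_i^2\|_{\psi_1}\lesssim 1$ for each $i$. Thus $D$ is a sum of independent, mean-zero, subexponential random variables, and Bernstein's inequality \eqref{eqn:BernsteinPsi1} (applied to $B_{ii}(\xi_i^2-\E\xi_i^2)$) yields a tail of the form $\exp(-c\min(u^2/\sum_i B_{ii}^2,\,u/\max_i|B_{ii}|))$, which is dominated by the right-hand side of \eqref{eqn:H-W}. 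This step is routine.

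For the off-diagonal term $S$, the plan is to use a decoupling argument to pass to $S' = \sum_{i,j}B_{ij}\xi_i\xi'_j$, where $(\xi'_j)$ is an independent copy of $(\xi_j)$; the standard de la Pe\~na--Gin\'e decoupling gives $\bP(|S|\geq u)\leq c\,\bP(|S'|\geq c'u)$. Conditioning on $\xi'$, the variable $S'=\sum_i \xi_i (B\xi')_i$ is, by Hoeffding's inequality for subgaussians, tail-bounded as
\begin{equation*}
\bP(|S'|\geq t\mid \xi')\leq 2\exp(-c\,t^2/\|B\xi'\|_2^2).
\end{equation*}
The remaining task is to control the size of $\|B\xi'\|_2^2$, balancing its typical value $\|B\|_{S^2}^2$ against its fluctuations. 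The cleanest way is a moment generating function approach: combining the conditional Gaussian-like bound with an analogous bound obtained by swapping the roles of $\xi$ and $\xi'$, one establishes $\E\exp(\lambda S)\leq \exp(C\lambda^2\|B\|_{S^2}^2)$ for all $|\lambda|\leq c/\|B\|_{S^\infty}$, and then Markov's inequality optimized in $\lambda$ yields exactly the mixed subgaussian/subexponential tail claimed.

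The main obstacle is the last step: turning the conditional subgaussian bound into an unconditional Bernstein-type tail requires either a self-bounded moment argument or a careful handling of the two regimes $u\lesssim \|B\|_{S^2}^2/\|B\|_{S^\infty}$ and $u\gtrsim \|B\|_{S^2}^2/\|B\|_{S^\infty}$. A clean alternative, avoiding this subtle step, is to apply a Gaussian comparison after decoupling: replace the subgaussian $\xi,\xi'$ by genuine standard Gaussians $g,g'$ up to constants (using a contraction/comparison principle), reducing everything to the explicitly computable Gaussian chaos whose moment generating function is $\det(I-2\lambda B)^{-1/2}$-type and directly gives the desired two-regime tail bound. Either route produces \eqref{eqn:H-W} with a universal constant $c>0$.
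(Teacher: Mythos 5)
The paper does not prove this statement: it cites it as the Hanson--Wright inequality from \cite{HaW71} and explicitly points to \cite{RuV13} for a modern proof, so there is no in-paper argument to compare against. Your sketch is, in outline, precisely that modern proof: split into diagonal and off-diagonal, treat the diagonal as a Bernstein-type sum of independent $\psi_1$-variables (using $\|\xi_i^2-\E\xi_i^2\|_{\psi_1}\lesssim\|\xi_i\|_{\psi_2}^2$, $\sum_i B_{ii}^2\leq \|B\|_{S^2}^2$, $\max_i|B_{ii}|\leq\|B\|_{S^\infty}$), and handle the off-diagonal by decoupling followed by MGF estimates. Your second route for the off-diagonal part, the ``Gaussian comparison after decoupling,'' is the one that actually closes the argument in \cite{RuV13}: one shows $\E_\xi\exp(\lambda\,\xi^*B\xi')\leq\exp(C\lambda^2\|B\xi'\|_2^2)$, rewrites the right-hand side as $\E_g\exp(\sqrt{2C}\lambda\langle g,B\xi'\rangle)$ for a standard Gaussian $g$, and then integrates out $\xi'$ using subgaussianity again, reducing to the explicitly computable Gaussian bilinear chaos, whose MGF $\det(I-\lambda^2 B^\top B)^{-1/2}$-type form produces the two-regime tail. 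The one place your sketch is under-specified, and which you correctly flag as the main obstacle, is exactly this subexponential regime $|\lambda|\sim 1/\|B\|_{S^\infty}$: simply conditioning on $\xi'$ and appealing to a union of the two regimes of $u$ does not by itself close the argument, and the first of your two remedies (``swapping the roles of $\xi$ and $\xi'$'') is not on its own a complete recipe; the Gaussianization step is what makes it work. One further minor point: for the decoupling step you want either the moment-decoupling inequality (as in the paper's display \eqref{eqn:decElem}) combined with moment-to-tail passage, or the de la Pe\~na--Montgomery-Smith tail decoupling; the off-hand statement $\bP(|S|\geq u)\leq c\,\bP(|S'|\geq c'u)$ is true but is itself a nontrivial theorem, so it deserves a citation rather than being taken for granted.
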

In the terminology of Section~\ref{sec:EmpProc}, (\ref{eqn:H-W}) implies that the process $(C_B(\xi))_{B \in \cB}$ has a mixed tail with respect to the pair $(d_{\infty},d_2)$. Thus, by Theorem~\ref{thm:mixedTailTal}
$$\Big(\E\sup_{B\in\cB}|C_B(\xi)|^p\Big)^{1/p} \lesssim \ga_1(\cB,d_{\infty}) + \ga_2(\cB,d_2) + \sqrt{p}\DelO_2(\cB) + p\DelO_{\infty}(\cB).$$
As it turns out, the occurrence of the $\ga_1$-functional in this bound can lead to suboptimal results in certain applications. To mend this, Krahmer, Mendelson and Rauhut proved the following deviation inequality for chaos processes of a special form, which involves only $\ga_2$-functionals.
\begin{theorem}
\cite[Theorem 3.5]{KMR13} Let $\cA$ be a set of $m\ti n$ matrices. Set $\xi=(\xi_1,\ldots,\xi_n)$, where $\xi_1,\ldots,\xi_n$ are independent, mean-zero, unit variance, real-valued, subgaussian random variables. Define
\begin{align*}
E & = \ga_2^2(\cA,d_{\infty}) + \DelO_2(\cA)\ga_2(\cA,d_{\infty}) \\
V & = \DelO_{\infty}(\cA)(\DelO_2(\cA) + \ga_2(\cA,d_{\infty})) \\
U & = \DelO_{\infty}^2(\cA)
\end{align*}
Then, there exist constants $c_1,c_2>0$ depending only on $\|\xi_1\|_{\psi_2},\ldots,\|\xi_n\|_{\psi_2}$ such that for all $u\geq 0$,
$$\bP\Big(\sup_{A\in \cA} \Big|\|A\xi\|_2^2 - \E\|A\xi\|_2^2\Big| \geq c_1E + u \Big) \leq 2\exp\Big(-c_2\min\Big\{\frac{u^2}{V},\frac{u}{U}\Big\}\Big).$$
\end{theorem}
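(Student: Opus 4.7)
\medskip
\noindent\emph{Proof proposal.} The plan is to chain the process $Y_A:=\|A\xi\|_2^2-\E\|A\xi\|_2^2$ in the metric $d_\infty$, using the Hanson--Wright inequality \eqref{eqn:H-W} as the source of mixed-tail increments, and then to employ the Cauchy--Schwarz device from the proof of Theorem~\ref{thm:supAverages} in order to replace the $\ga_1$ contribution that a direct application of Theorem~\ref{thm:mixedTailTal} would produce with the desired $\ga_2^2$ term.

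For the increments, I write $B=A_1^{*}A_1-A_2^{*}A_2=A_1^{*}(A_1-A_2)+(A_1-A_2)^{*}A_2$ and exploit $\|XY\|_{S^2}\leq\|X\|_{S^2}\|Y\|_{S^\infty}$ and $\|XY\|_{S^\infty}\leq\|X\|_{S^\infty}\|Y\|_{S^\infty}$ to obtain
\[\|B\|_{S^2}\leq 2\DelO_2(\cA)\|A_1-A_2\|_{S^\infty},\qquad \|B\|_{S^\infty}\leq 2\DelO_\infty(\cA)\|A_1-A_2\|_{S^\infty}.\]
Substituting this into \eqref{eqn:H-W} shows that $(Y_A)$ has a mixed tail with respect to $c\DelO_2(\cA)d_\infty$ and $c\DelO_\infty(\cA)d_\infty$. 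I then fix $p\geq 1$, set $l=\lfloor\log_2 p\rfloor$, pick an optimal admissible sequence $(T_n)$ for $\ga_{2,p}(\cA,d_\infty)$ with selectors $\pi_n(A)=\argmin_{A'\in T_n}\|A-A'\|_{S^\infty}$, and, following the template of Theorem~\ref{thm:supAverages}, split the chaining levels $\{n>l\}$ into the subgaussian range $I_{\text{subg}}=\{n:2^{n/2}\DelO_\infty(\cA)\leq \DelO_2(\cA)\}$ and the subexponential range $I_{\text{subex}}$. On $I_{\text{subg}}$, the subgaussian regime of Hanson--Wright applied to $Y_{\pi_n(A)}-Y_{\pi_{n-1}(A)}$, combined with Lemma~\ref{lem:unionBoundEst}, contributes $\lesssim v\,\DelO_2(\cA)\ga_{2,p}(\cA,d_\infty)$ on a high-probability event.

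For $I_{\text{subex}}$ a direct use of the subexponential regime of \eqref{eqn:H-W} would produce an unwanted $\DelO_\infty(\cA)\ga_1(\cA,d_\infty)$ term; to avoid this I would exploit the factorization
\[\|\pi_n(A)\xi\|_2^2-\|\pi_{n-1}(A)\xi\|_2^2=\langle(\pi_n(A)-\pi_{n-1}(A))\xi,(\pi_n(A)+\pi_{n-1}(A))\xi\rangle\]
and Cauchy--Schwarz. The ``sum'' factor is bounded uniformly by $2(\sup_A|Y_A|^{1/2}+\DelO_2(\cA))$, while the ``difference'' factor $\|(\pi_n(A)-\pi_{n-1}(A))\xi\|_2$ is the square root of a nonnegative chaos, to which Hanson--Wright applies again; in the subexponential regime this yields a bound $\lesssim \sqrt{v}\cdot 2^{n/2}\|\pi_n(A)-\pi_{n-1}(A)\|_{S^\infty}$. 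Summing, taking a union bound over pairs via Lemma~\ref{lem:unionBoundEst}, and telescoping the deterministic centering $\|\pi_n(A)\|_{S^2}^2-\|\pi_{n-1}(A)\|_{S^2}^2$ produces on an event of probability $\geq 1-c\exp(-pv/4)$ a quadratic inequality of the shape
\[\sup_A|Y_A|\lesssim\sqrt{v}\,\ga_{2,p}(\cA,d_\infty)\sup_A|Y_A|^{1/2}+\sqrt{v}\,\DelO_2(\cA)\ga_{2,p}(\cA,d_\infty)+v\,\DelO_\infty(\cA)\ga_{2,p}(\cA,d_\infty)+\sup_{A\in T_l}|Y_A|.\]

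Taking $L^p$ norms through Lemma~\ref{lem:LpBdElem}, controlling $\|\sup_{A\in T_l}|Y_A|\|_{L^p}$ by Lemma~\ref{lem:supSmallSet} together with pointwise Hanson--Wright (which yields $\lesssim\sqrt{p}\,\DelO_2^2(\cA)+p\,\DelO_\infty^2(\cA)$), and solving the quadratic in $(\E\sup_A|Y_A|^p)^{1/(2p)}$ will produce an $L^p$ estimate whose ``deterministic'' part reproduces $c_1E$ and whose $p$-dependent part is controlled by $\sqrt{p}\,\DelO_\infty(\cA)(\DelO_2(\cA)+\ga_{2,p}(\cA,d_\infty))+p\,\DelO_\infty^2(\cA)$; the final tail bound with parameters $V$ and $U$ then follows from Lemma~\ref{lem:MomentsToTails}. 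I expect the main obstacle to be the Cauchy--Schwarz bookkeeping in the subexponential range: one must check that the ``difference'' factor really contributes only $\sqrt{v}\cdot 2^{n/2}\|\pi_n(A)-\pi_{n-1}(A)\|_{S^\infty}$ without a spurious $\DelO_\infty(\cA)$ prefactor (which requires careful use of $\|A\|_{S^4}^2\leq \|A\|_{S^\infty}\|A\|_{S^2}$ and of the defining inequality of $I_{\text{subex}}$), and that the telescoped deterministic centering is benign, so that the resulting quadratic inequality has the correct homogeneity in $\ga_{2,p}(\cA,d_\infty)$.
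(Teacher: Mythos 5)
The theorem you are proving is stated in the paper without proof (it is cited verbatim from \cite{KMR13}); what the paper actually proves is its strengthening Theorem~\ref{thm:chaos}, and it does so by a \emph{decoupling} route: after a symmetrization/decoupling step (via (\ref{eqn:decElem}) and Lemma~\ref{lem:ArG}) the off--diagonal chaos becomes a bilinear form $\xi^{*}B\xi'$ in two independent copies, and Lemma~\ref{lem:chainLpChaosA} then chains this bilinear form \emph{conditionally on $\xi'$}, where it is purely subgaussian with respect to the random (pseudo)metric $A\mapsto\|A\xi'\|_2$. This is what produces the $\ga_{2}$--type bound $C\ga_{2,p}(\cA,d_\infty)\sup_{A}\|A\xi'\|_2$, with the random radius later removed through a quadratic inequality. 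Your plan is genuinely different: no decoupling, chain $Y_A$ directly and use the Cauchy--Schwarz split from Theorem~\ref{thm:supAverages} to tame the subexponential range.

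Unfortunately the analogy with Theorem~\ref{thm:supAverages} breaks at exactly the point you flag as ``the main obstacle,'' and I do not see how to repair it. In the empirical--process case, Lemma~\ref{lem:HTail} succeeds because the mean of the squared difference factor, $\frac{1}{m}\sum_i\E(X_{t_i}-X_{s_i})^2$, is automatically bounded by $d_{\psi_2}^2(s,t)$ --- the variance, the $\psi_1$ parameter, and the mean all sit on the same scale $d_{\psi_2}^2$, so the mean can be absorbed. For the chaos process, the analogous quantity $\|(\pi_n(A)-\pi_{n-1}(A))\xi\|_2^2$ has mean $\|\pi_n(A)-\pi_{n-1}(A)\|_{S^2}^2$, and this is \emph{not} controlled by $\|\pi_n(A)-\pi_{n-1}(A)\|_{S^\infty}^2$: the $S^2$ norm of the difference stays $\leq 2\DelO_2(\cA)$ while its $S^\infty$ norm goes to zero as the chain refines, so the ratio $\|\pi_n(A)-\pi_{n-1}(A)\|_{S^2}/\|\pi_n(A)-\pi_{n-1}(A)\|_{S^\infty}$ is unbounded along the chain. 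The interpolation bound $\|M\|_{S^4}^2\leq\|M\|_{S^\infty}\|M\|_{S^2}$ controls the \emph{fluctuation} (Hanson--Wright variance parameter of $M^{*}M$), not the mean, and the defining inequality of your $I_{\mathrm{subex}}$ compares $2^{n/2}$ with the global ratio $\DelO_2(\cA)/\DelO_\infty(\cA)$ rather than with the (much larger, level--dependent) local ratio that would be needed. Consequently the asserted bound
$\|(\pi_n(A)-\pi_{n-1}(A))\xi\|_2\lesssim\sqrt{v}\,2^{n/2}\|\pi_n(A)-\pi_{n-1}(A)\|_{S^\infty}$
fails already in expectation for all sufficiently large $n$, and the sum over $I_{\mathrm{subex}}$ does not telescope into a $\ga_{2,p}(\cA,d_\infty)$ term: you inevitably pick up either a $\ga_2(\cA,d_2)$ contribution or a dimension--dependent factor. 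The decoupling step is precisely what eliminates this problem --- once $\xi'$ is frozen, the ``difference factor'' $\|(\pi_n(A)-\pi_{n-1}(A))\xi'\|_2$ appears as a deterministic chaining distance, not a random quantity that needs its own concentration estimate; integrating over $\xi'$ afterwards only produces $\sup_A\|A\xi'\|_2$, which is handled in one stroke by the quadratic inequality. I would therefore recommend importing the decoupling step rather than trying to push the Cauchy--Schwarz approach through. (Two minor slips to also fix: the small--set term gives $\sqrt{p}\,\DelO_4^2(\cA)$, not $\sqrt{p}\,\DelO_2^2(\cA)$, since $\|A^{*}A\|_{S^2}=\|A\|_{S^4}^2$; and the direct subexponential regime of Hanson--Wright would in any case contribute $\DelO_\infty(\cA)\ga_1(\cA,d_\infty)$, not $\DelO_\infty(\cA)\ga_2(\cA,d_\infty)$, so the quadratic inequality as you wrote it does not follow from that branch either.)
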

As discussed in \cite{KMR13}, this result has interesting applications in compressed sensing with structured random matrices.\par
Note that due to the appearance of the $\ga_2$-functional in the factor $V$, the bound does not exhibit the correct tail behavior for large $u$. In fact, one would expect from Lemma~\ref{lem:supSmallSet} and the Hanson-Wright bound that $V$ can be replaced by the smaller factor $\DelO_4^2(\cA)$. In Theorem~\ref{thm:chaos} we show that is indeed possible. Our proof follows in general lines the proof of \cite{KMR13}, with some simplifications. For example, we completely avoid the use of the majorizing measures theorem.\par
The main chaining argument in the proof is contained in the following lemma. We follow the proof of \cite[Lemma 3.2]{KMR13}.
\begin{lemma}
\label{lem:chainLpChaosA}
Fix $1\leq p<\infty$. Let $\xi=(\xi_1,\ldots,\xi_n)$ be a random vector with $\max_i\|\xi_i\|_{\psi_2}\leq 1$ and let $\xi'$ be an independent copy of $\xi$ defined on a probability space $(\Om',\cF',\bP')$. Set $l=\lfloor\log_2(p)\rfloor$. Let $\cA$ be a collection of matrices, let $(\cA_n)_{n\geq 0}$ be an optimal admissible sequence for $\ga_{2,p}(\cA,d_{\infty})$ and define an associated sequence of maps $\pi_n:\cA_n\rightarrow\cA$ by $\pi_n(A) = \argmin_{B\in \cA}d_{\infty}(A,B)$. Then,
\begin{equation*}
\Big(\E\sup_{A\in\cA} \Big|\xi^*(A^*A - \pi_l(A)^*\pi_l(A))\xi'\Big|^p\Big)^{1/p} \leq C\ga_{2,p}(\cA,d_{\infty})\Big(\E\sup_{A\in \cA} \|A\xi\|_2^p\Big)^{1/p}.
\end{equation*}
\end{lemma}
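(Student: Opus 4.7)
The plan is to expand $A^{*}A - \pi_l(A)^{*}\pi_l(A)$ by telescoping along the admissible chain and then split the resulting bilinear forms into pieces that are linear either in $\xi$ or in $\xi'$, so that a conditional subgaussian chaining argument applies. Concretely, the identity $B^{*}B - C^{*}C = (B-C)^{*}B + C^{*}(B-C)$ with $B=\pi_n(A)$, $C=\pi_{n-1}(A)$, combined with the telescope over $n>l$ (which is finite since $\cA$ is finite and $\pi_n(A)=A$ eventually), gives the decomposition $\xi^{*}(A^{*}A - \pi_l(A)^{*}\pi_l(A))\xi' = I(A) + II(A)$, where
$$I(A) = \sum_{n>l}\langle \pi_n(A)\xi', (\pi_n(A)-\pi_{n-1}(A))\xi\rangle, \qquad II(A) = \sum_{n>l}\langle (\pi_n(A)-\pi_{n-1}(A))\xi', \pi_{n-1}(A)\xi\rangle.$$

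I handle $I(A)$ by freezing $\xi'$ and running the chaining in $\xi$. Since $\xi$ has independent subgaussian coordinates with $\|\xi_i\|_{\psi_2}\leq 1$, the $n$-th summand of $I(A)$, viewed as the linear functional $\xi\mapsto \langle \xi,(\pi_n(A)-\pi_{n-1}(A))^{*}\pi_n(A)\xi'\rangle$, is subgaussian in $\xi$ with parameter at most $\|(\pi_n(A)-\pi_{n-1}(A))^{*}\pi_n(A)\xi'\|_2 \leq d_\infty(\pi_n(A),\pi_{n-1}(A))\,S(\xi')$, where $S(\xi'):=\sup_{B\in\cA}\|B\xi'\|_2$. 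A standard subgaussian tail bound then gives, for $u\geq 1$, the estimate
$$\bP_\xi\bigl(|\cdot|\geq c\sqrt{u}\,2^{n/2}d_\infty(\pi_n(A),\pi_{n-1}(A))\,S(\xi')\,\bigm|\,\xi'\bigr)\leq 2\exp(-u 2^n).$$
The number of pairs $(\pi_n(A),\pi_{n-1}(A))$ is at most $|\cA_n||\cA_{n-1}|\leq 2^{2^{n+1}}$, so Lemma~\ref{lem:unionBoundEst} produces an event of conditional probability at least $1-c\exp(-pu/4)$ for $u\geq 2$ on which, after the standard chaining estimate $\sum_{n>l}2^{n/2}d_\infty(\pi_n(A),\pi_{n-1}(A))\lesssim \ga_{2,p}(\cA,d_\infty)$, one obtains $\sup_{A\in\cA}|I(A)|\lesssim \sqrt u\,S(\xi')\ga_{2,p}(\cA,d_\infty)$ for every $A$. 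Applying Lemma~\ref{lem:LpBdElem} (with $\al=2$) turns this into $(\E_\xi\sup_A|I(A)|^p\mid \xi')^{1/p}\lesssim S(\xi')\ga_{2,p}(\cA,d_\infty)$, and taking the $L^p$-norm in $\xi'$, together with $\xi'\stackrel{\mathrm{d}}{=}\xi$, yields $(\E\sup_A|I(A)|^p)^{1/p}\lesssim \ga_{2,p}(\cA,d_\infty)\,(\E\sup_B\|B\xi\|_2^p)^{1/p}$.

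The term $II(A)$ is treated by the mirror argument, freezing $\xi$ and using that the $n$-th summand is linear in $\xi'$ with parameter $\|(\pi_n(A)-\pi_{n-1}(A))^{*}\pi_{n-1}(A)\xi\|_2\leq d_\infty(\pi_n(A),\pi_{n-1}(A))\,S(\xi)$; the same chaining gives an identical bound, and the triangle inequality finishes the proof. The main technical point is the arrangement at the conditioning step: one needs the "small" factor $\pi_n(A)-\pi_{n-1}(A)$ (the one controlled by the chaining metric $d_\infty$) to be paired with a "large" but fixed matrix $\pi_n(A)$ or $\pi_{n-1}(A)$ acting on the frozen subgaussian vector, so that the subgaussian parameter reads exactly as $d_\infty$ times $\sup_{B\in\cA}\|B\,\cdot\,\|_2$. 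The asymmetric factorization $B^{*}B-C^{*}C=(B-C)^{*}B+C^{*}(B-C)$ is precisely what makes this pairing possible and is what distinguishes the decoupled chaos chaining from the single-metric chaining of Theorem~\ref{thm:chainsplitGeneral}.
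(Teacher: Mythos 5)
Your proposal is correct and follows essentially the same route as the paper: the same telescoping decomposition via the asymmetric factorization $B^{*}B-C^{*}C=(B-C)^{*}B+C^{*}(B-C)$ into the two decoupled sums $S_1,S_2$, the same conditioning step (freezing one of $\xi,\xi'$ and running a subgaussian chain in the other with subgaussian parameter bounded by $d_\infty(\pi_n(A),\pi_{n-1}(A))\sup_{B\in\cA}\|B\xi'\|_2$), the same union bound via Lemma~\ref{lem:unionBoundEst}, and the same passage to moments via Lemma~\ref{lem:LpBdElem} before integrating out the frozen variable. The only differences are cosmetic (your rescaling $u\mapsto\sqrt{u}$ in the tail bound and the remark that the telescope terminates because $\cA$ is finite).
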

\begin{proof}
We make the decomposition
\begin{align*}
& \xi^*(A^*A - \pi_l(A)^*\pi_l(A))\xi' \\
& \qquad = \sum_{n>l} \xi^*\pi_n(A)^*\pi_n(A)\xi' - \xi^*\pi_{n-1}(A)^*\pi_{n-1}(A)\xi' \\
& \qquad = \sum_{n>l} \xi^*(\pi_n(A)-\pi_{n-1}(A))^*\pi_n(A)\xi' + \sum_{n>l} \xi^*\pi_{n-1}(A)^*(\pi_n(A)-\pi_{n-1}(A))\xi' \\
& \qquad =: S_1(A) + S_2(A).
\end{align*}
Let us consider $S_1(A)$. Note that the terms $\xi^*(\pi_n(A)-\pi_{n-1}(A))^*\pi_n(A)\xi'$ are subgaussian in $\xi$ when we condition on $\xi'$. Thus, for any $n>l$,
\begin{align*}
& \bP(|\xi^*(\pi_n(A)-\pi_{n-1}(A))^*\pi_n(A)\xi'|\geq u2^{n/2}\|(\pi_n(A)-\pi_{n-1}(A))^*\pi_n(A)\xi'\|_2) \\
& \qquad \qquad \qquad \qquad \qquad \qquad \leq 2\exp(-u^2 2^n).
\end{align*}
Note that for any $n>l$,
$$|\{((\pi_n(A)-\pi_{n-1}(A)),\pi_n(A)); A\in \cA\}|\leq |\cA_n| \ |\cA_{n-1}| \leq 2^{2^n}2^{2^{n-1}}\leq 2^{2^{n+1}}.$$
Let $\Om_{u,p}$ be the event
\begin{align*}
\forall n>l, \forall A\in \cA \ : &  |\xi^*(\pi_n(A)-\pi_{n-1}(A))^*\pi_n(A)\xi'| \\
& \qquad \qquad \leq u2^{n/2}\|(\pi_n(A)-\pi_{n-1}(A))^*\pi_n(A)\xi'\|_2.
\end{align*}
By Lemma~\ref{lem:unionBoundEst},
$$\bP(\Om_{u,p}^c) \leq c\exp(-p u^2/4) \qquad (u\geq \sqrt{2}).$$
If the event $\Om_{u,p}$ occurs, then
\begin{align*}
|S_1(A)| & \leq \sum_{n>l}u2^{n/2}\|(\pi_n(A)-\pi_{n-1}(A))^*\pi_n(A)\xi'\|_2 \\
& \leq \sum_{n>l}u2^{n/2}\|\pi_n(A)-\pi_{n-1}(A)\|_{S^{\infty}}\|\pi_n(A)\xi'\|_2 \\
& \leq u(1+\sqrt{2})\ga_{2,p}(\cA,d_{\infty})\sup_{A\in\cA} \|A\xi'\|_2.
\end{align*}
In conclusion, for any $u\geq\sqrt{2}$,
$$\bP\Big(\sup_{A\in \cA} |S_1(A)|> u(1+\sqrt{2})\ga_{2,p}(\cA,d_{\infty})\sup_{A\in\cA} \|A\xi'\|_2\Big) \leq c\exp(-p u^2/4).$$
By Lemma~\ref{lem:LpBdElem},
$$\Big(\E\sup_{A\in \cA} |S_1(A)|^p\Big)^{\frac{1}{p}} \leq C\ga_{2,p}(\cA,d_{\infty})\sup_{A\in\cA}\|A\xi'\|_2.$$
Taking the $L^p$-norm over $\Om'$ yields
$$\Big(\E'\E\sup_{A\in \cA} |S_1(A)|^p\Big)^{\frac{1}{p}} \leq C\Big(\E'\sup_{A\in\cA}\|A\xi'\|_2^p\Big)^{\frac{1}{p}}\ga_{2,p}(\cA,d_{\infty}).$$
A very similar argument gives
$$\Big(\E\E'\sup_{A\in \cA} |S_2(A)|^p\Big)^{\frac{1}{p}} \leq C\Big(\E\sup_{A\in\cA}\|A\xi\|_2^p\Big)^{\frac{1}{p}}\ga_{2,p}(\cA,d_{\infty}).$$
The asserted estimate now follows by the triangle inequality.
\end{proof}
In the proof of the main theorem of this section we use the following decoupling inequality due to Arcones and Gin\'{e}.
\begin{lemma}
\label{lem:ArG}
\cite{ArG93} Let $g$ be an $n$-dimensional standard gaussian vector, let $g'$ be an independent copy of $g$ and let $\cB$ be a collection of self-adjoint $n\ti n$ matrices. There is an absolute constant $C>0$ such that for any $1\leq p<\infty$,
$$\Big(\E\sup_{B\in\cB}\Big|g^*Bg - \E(g^*Bg)\Big|^p\Big)^{1/p} \leq C \Big(\E\E'\sup_{B\in\cB}|g^*Bg'|^p\Big)^{1/p}.$$
\end{lemma}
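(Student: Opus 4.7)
The plan is to follow the classical Gaussian decoupling argument by orthogonal rotation, combined with a Jensen-type symmetrization. The two ingredients are the rotational invariance of the standard Gaussian measure in $\R^{2n}$ and an elementary cancellation that uses $B^*=B$.

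First, since $(g,g')$ is standard Gaussian on $\R^{2n}$, the orthogonal change of variables $h=(g+g')/\sqrt{2}$, $h'=(g-g')/\sqrt{2}$ preserves the joint distribution. Expanding,
\begin{equation*}
g^*Bg' \;=\; \tfrac{1}{2}(h+h')^*B(h-h') \;=\; \tfrac{1}{2}\bigl(h^*Bh - h'^*Bh' + h'^*Bh - h^*Bh'\bigr),
\end{equation*}
and the cross term $h'^*Bh - h^*Bh'$ vanishes whenever $B$ is real symmetric. For a general self-adjoint $B$ with real $g$, only $\mathrm{Re}(B)$ contributes to $g^*Bg$, since $g^*Ag=0$ for any antisymmetric $A$ and real $g$, while $|g^*Bg'|\ge|g^*\mathrm{Re}(B)g'|$ on the decoupled side; so the Hermitian case reduces to the real symmetric case at no cost. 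Thus, as processes indexed by $B$, $g^*Bg' \stackrel{d}{=} \tfrac{1}{2}(g^*Bg - g'^*Bg')$.

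Second, set $Z_B := g^*Bg$, $Y_B := Z_B - \E Z_B$, and let $Z'_B$ be an independent copy. Because $\E'Z'_B = \E Z_B$, we have $Y_B = \E'[Z_B - Z'_B]$, so Jensen's inequality (in $\E'$) and monotonicity of the supremum give
\begin{equation*}
\E\sup_{B\in\cB}|Y_B|^p \;\le\; \E\E'\sup_{B\in\cB}|Z_B - Z'_B|^p \;=\; 2^p\,\E\E'\sup_{B\in\cB}|g^*Bg'|^p,
\end{equation*}
where the last equality uses the distributional identity from the previous step. Taking $p$-th roots yields the claim with $C=2$.

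The argument is short and entirely transparent; the only point requiring attention is the cancellation of the cross terms in the expansion of $(h+h')^*B(h-h')$. This is exactly where self-adjointness of $B$ is used, and is the reason that the lemma fails for general non-symmetric matrices. No substantial obstacle is anticipated.
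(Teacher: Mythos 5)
Your proof is correct. The paper itself does not give a proof of Lemma~\ref{lem:ArG}; it only cites Arcones and Gin\'{e} \cite{ArG93}, whose proof covers general (non-Gaussian) U-statistics of arbitrary order and is considerably more involved. You instead give the standard elementary argument specific to second-order Gaussian chaos: the rotation $(g,g')\mapsto\bigl((g+g')/\sqrt{2},(g-g')/\sqrt{2}\bigr)$ preserves the joint law, the cross terms cancel by symmetry of $B$, and a Jensen step completes the decoupling, yielding the explicit constant $C=2$. The reduction from Hermitian $B$ to $\mathrm{Re}(B)$ is handled properly: $g^*Bg=g^{\mathsf T}\mathrm{Re}(B)g$ for real $g$ since the antisymmetric imaginary part contributes nothing to the quadratic form, while on the decoupled side $|g^*Bg'|\geq|g^{\mathsf T}\mathrm{Re}(B)g'|$ pointwise because the real and imaginary contributions are orthogonal; both inequalities go in the right direction relative to the claim. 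The Jensen step $|Z_B-\E Z_B|^p=|\E'[Z_B-Z'_B]|^p\leq\E'|Z_B-Z'_B|^p$ followed by $\sup_B\E'\leq\E'\sup_B$ is also correct for $p\geq 1$. The argument is shorter and more transparent than what one would extract from \cite{ArG93}, at the cost of being tied to Gaussian rotation invariance, which is exactly what the lemma assumes.
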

We will also use the following decoupling inequality, which is elementary to prove (see e.g.\ \cite[Theorem 8.11]{FoR13}). Let $\xi_1,\ldots,\xi_n$ be independent, real-valued, mean-zero random variables and let $\xi_1',\ldots,\xi_n'$ be independent copies. Then, for any $1\leq p<\infty$,
\begin{equation}
\label{eqn:decElem}
\Big(\E\sup_{B\in\cB}\Big|\sum_{i\neq j} \xi_i\xi_j B_{ij}\Big|^p\Big)^{1/p} \leq 4\Big(\E\E'\sup_{B\in\cB}|\xi^* B\xi'|^p\Big)^{1/p}
\end{equation}
\begin{theorem}
\label{thm:chaos}
Let $\cA$ be a set of $m\ti n$ matrices. Suppose that $\xi_1,\ldots,\xi_n$ are independent, real-valued, mean-zero random variables, let $\xi=(\xi_1,\ldots,\xi_n)$ and set $\|\xi\|_{\psi_2} = \max_i\|\xi_i\|_{\psi_2}$. For any $1\leq p<\infty$,
\begin{align}
\label{eqn:supChaosImproved}
\Big(\E\sup_{A\in\cA}\Big|\|A\xi\|_2^2 - \E\|A\xi\|_2^2\Big|^p\Big)^{1/p} & \lesssim \|\xi\|_{\psi_2}^2\Big(\ga_{2,p}^2(\cA,d_{\infty}) + \DelO_2(\cA)\ga_{2,p}(\cA,d_{\infty}) \nonumber \\
& \qquad \qquad \qquad \qquad \ \ + \sqrt{p}\DelO_{4}^2(\cA) + p\DelO_{\infty}^2(\cA)\Big).
\end{align}
As a consequence, there are constants $c,C>0$ such that for any $u\geq 1$,
\begin{align*}
\bP\Big(\sup_{A\in\cA}\Big|\|A\xi\|_2^2 - \E\|A\xi\|_2^2\Big| & \geq C\|\xi\|_{\psi_2}^2\Big(\ga_{2}^2(\cA,d_{\infty}) + \DelO_2(\cA)\ga_{2}(\cA,d_{\infty})\Big) \\
& \qquad \qquad \qquad + c\|\xi\|_{\psi_2}^2\Big(\sqrt{u}\DelO_{4}^2(\cA) + u\DelO_{\infty}^2(\cA)\Big)\Big) \leq e^{-u}.
\end{align*}
\end{theorem}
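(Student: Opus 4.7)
The plan is to mirror the proof of Theorem~\ref{thm:supAverages}: peel off a short initial chain at level $l=\lfloor\log_2 p\rfloor$, handle the small-set remainder by Hanson--Wright, decouple and chain the remaining increments via Lemma~\ref{lem:chainLpChaosA}, and close the estimate with a quadratic inequality. Fix $1\le p<\infty$, let $(\cA_n)_{n\ge 0}$ be an optimal admissible sequence for $\ga_{2,p}(\cA,d_{\infty})$ with associated nearest-point maps $\pi_n\colon\cA\to\cA_n$, and set $Z_A:=\|A\xi\|_2^2-\E\|A\xi\|_2^2$. Decompose $Z_A=(Z_A-Z_{\pi_l(A)})+Z_{\pi_l(A)}$.

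Since $|\cA_l|\le 2^{2^l}\le 2^p$, Lemma~\ref{lem:supSmallSet} reduces $(\E\sup_A|Z_{\pi_l(A)}|^p)^{1/p}$ to a worst-case $L^p$-estimate on a single $Z_B$; Hanson--Wright (\ref{eqn:H-W}) combined with Lemma~\ref{lem:TailsToMoments} and the identities $\|B^*B\|_{S^2}^2=\|B\|_{S^4}^4$ and $\|B^*B\|_{S^\infty}=\|B\|_{S^\infty}^2$ then yields
$$\Big(\E\sup_A|Z_{\pi_l(A)}|^p\Big)^{1/p}\lesssim\|\xi\|_{\psi_2}^2\bigl(\sqrt{p}\,\DelO_4^2(\cA)+p\,\DelO_\infty^2(\cA)\bigr),$$
which already produces the two tail terms of (\ref{eqn:supChaosImproved}). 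For the chaining term, write $M(A):=A^*A-\pi_l(A)^*\pi_l(A)$ and split
$$Z_A-Z_{\pi_l(A)}=\sum_{i\neq j}\xi_i\xi_j\,M(A)_{ij}+\sum_i(\xi_i^2-\E\xi_i^2)\,M(A)_{ii}.$$

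The off-diagonal piece is the main part: the elementary decoupling (\ref{eqn:decElem}) bounds it in $L^p$ by $4\bigl(\E\E'\sup_A|\xi^*M(A)\xi'|^p\bigr)^{1/p}$, after which Lemma~\ref{lem:chainLpChaosA}, applied after rescaling $\xi$ so that $\|\xi\|_{\psi_2}=1$, gives
$$\Big(\E\sup_A\Big|\sum_{i\neq j}\xi_i\xi_j\,M(A)_{ij}\Big|^p\Big)^{1/p}\lesssim\|\xi\|_{\psi_2}\,\ga_{2,p}(\cA,d_\infty)\,\Big(\E\sup_A\|A\xi\|_2^p\Big)^{1/p}.$$
The diagonal sum is a sum of independent centred $\psi_1$ variables whose $n$-th chaining increment has coefficients $c^{(n)}_i=\langle(\pi_n(A)-\pi_{n-1}(A))e_i,\,(\pi_n(A)+\pi_{n-1}(A))e_i\rangle$ satisfying $\|c^{(n)}\|_2\le 2\DelO_2(\cA)\|\pi_n(A)-\pi_{n-1}(A)\|_{S^\infty}$ and $\|c^{(n)}\|_\infty\le 2\DelO_\infty(\cA)\|\pi_n(A)-\pi_{n-1}(A)\|_{S^\infty}$; chaining with Bernstein (\ref{eqn:BernsteinPsi1}) gives a subgaussian contribution $\lesssim\|\xi\|_{\psi_2}^2\DelO_2(\cA)\ga_{2,p}(\cA,d_\infty)$, while the subexponential contribution has to be steered into the $\ga_{2,p}^2$ and $p\,\DelO_\infty^2$ terms.

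To close the argument, combine the above with
$$\Big(\E\sup_A\|A\xi\|_2^p\Big)^{1/p}\le\Big(\E\sup_A|Z_A|^p\Big)^{1/2p}+\bigl(\sup_A\E\|A\xi\|_2^2\bigr)^{1/2}\lesssim\Big(\E\sup_A|Z_A|^p\Big)^{1/2p}+\|\xi\|_{\psi_2}\DelO_2(\cA),$$
which produces a quadratic inequality in $\bigl(\E\sup_A|Z_A|^p\bigr)^{1/2p}$; solving and squaring deliver (\ref{eqn:supChaosImproved}), and the tail statement then follows from Lemma~\ref{lem:MomentsToTails}. The main obstacle I foresee is the subexponential part of the diagonal chaining: a naive application of Theorem~\ref{thm:mixedTailTal} would insert a $\DelO_\infty(\cA)\,\ga_{1,p}(\cA,d_\infty)$ term, which is absent from (\ref{eqn:supChaosImproved}). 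Avoiding it will require either mimicking the randomised chaining underlying Lemma~\ref{lem:chainLpChaosA} (replacing $\DelO_\infty$ at each chaining level by a random factor of the form $\sup_A\|A\xi\|_2$ which is later absorbed through the quadratic inequality), or splitting the chaining range into the subgaussian and subexponential regimes of Hanson--Wright in such a way that the subexponential increments feed only the $\ga_{2,p}^2$ and $p\,\DelO_\infty^2$ terms.
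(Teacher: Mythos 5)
Your decomposition, the treatment of the small-set remainder via Lemma~\ref{lem:supSmallSet} plus Hanson--Wright, the decoupling and chaining of the off-diagonal part via (\ref{eqn:decElem}) and Lemma~\ref{lem:chainLpChaosA}, and the closing quadratic inequality all match the paper's proof. The genuine gap is exactly the one you flag: the diagonal term $\sum_i(\xi_i^2-\E\xi_i^2)M(A)_{ii}$. You propose to chain it directly via Bernstein's inequality with the coefficient bounds $\|c^{(n)}\|_2\lesssim\DelO_2(\cA)d_\infty(\pi_n(A),\pi_{n-1}(A))$ and $\|c^{(n)}\|_\infty\lesssim\DelO_\infty(\cA)d_\infty(\pi_n(A),\pi_{n-1}(A))$, but those bounds make the per-level increments mixed-tail with subexponential scale proportional to $\DelO_\infty(\cA)d_\infty(\pi_n(A),\pi_{n-1}(A))$, and summing over levels inevitably produces a $\DelO_\infty(\cA)\ga_1(\cA,d_\infty)$ term. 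Your two suggested fixes are not carried out, and neither is obviously workable: there is no random $\sup_A\|A\xi\|_2$ factor to absorb here because the diagonal chaining does not have the bilinear structure of Lemma~\ref{lem:chainLpChaosA}, and splitting the chaining range into subgaussian and subexponential regimes still leaves a subexponential block whose contribution does not collapse to $\ga_{2,p}^2+p\DelO_\infty^2$ without a further idea.

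The paper's resolution is different and is the crux of the argument. It symmetrizes the diagonal sum by a Rademacher vector $\eps$, applies the contraction principle to replace the coefficients $\xi_i^2$ by $g_i^2$ for a standard Gaussian $g$, then de-symmetrizes. This produces (i) a centred Gaussian chaos $g^*Bg-\E(g^*Bg)$, which is decoupled via the Arcones--Gin\'{e} inequality (Lemma~\ref{lem:ArG}) to $g^*Bg'$ and fed back into Lemma~\ref{lem:chainLpChaosA} with $g,g'$ in place of $\xi,\xi'$, and (ii) a pure Rademacher process $A\mapsto\sum_i\eps_i(A^*A)_{ii}$, which by Khintchine and the Lipschitz bound $\bigl(\sum_i|(C^*C)_{ii}-(D^*D)_{ii}|^2\bigr)^{1/2}\le 2\DelO_2(\cA)d_\infty(C,D)$ is genuinely $\psi_2$ with respect to $\DelO_2(\cA)d_\infty$, so Theorem~\ref{thm:chainsplitGeneral} applies and yields $\DelO_2(\cA)\ga_{2,p}(\cA,d_\infty)$ directly with no $\ga_1$ term. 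This Gaussian comparison step, and with it the use of the Arcones--Gin\'{e} decoupling, is what your proposal is missing.
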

\begin{proof}
By dividing both sides of (\ref{eqn:supChaosImproved}) by $\|\xi\|_{\psi_2}$ if necessary, we may assume that $\|\xi\|_{\psi_2}\leq 1$. Let $l=\lfloor\log_2(p)\rfloor$. Let $(\cA_n)_{n\geq 0}$ and $(\pi_n)_{n\geq 0}$ be as in Lemma~\ref{lem:chainLpChaosA} and write
\begin{align*}
&\sup_{A\in\cA} \xi^*A^*A\xi - \E(\xi^*A^*A\xi) \\
&\qquad \leq \sup_{A\in\cA} \xi^*A^*A\xi - \xi^*\pi_l(A)^*\pi_l(A)\xi - \E(\xi^*A^*A\xi - \xi^*\pi_l(A)^*\pi_l(A)\xi) \\
& \qquad \qquad + \sup_{A\in\cA} \xi^*\pi_l(A)^*\pi_l(A)\xi - \E(\xi^*\pi_l(A)^*\pi_l(A)\xi).
\end{align*}
We continue by estimating the first term. We write $B = B(A,l) := A^*A - \pi_l(A)^*\pi_l(A)$ for brevity. By the triangle inequality and the decoupling inequality (\ref{eqn:decElem}),
\begin{align*}
& \Big(\E\sup_{A\in\cA}|\xi^*B\xi - \E(\xi^*B\xi)|^p\Big)^{1/p} \\
& \qquad \leq \Big(\E\sup_{A\in\cA}\Big|\sum_{i\neq j} \xi_i\xi_j B_{ij}\Big|^p\Big)^{1/p} + \Big(\E\sup_{A\in\cA}\Big|\sum_i (|\xi_i|^2-\E|\xi_i|^2)B_{ii}\Big|^p\Big)^{1/p} \\
& \qquad \leq \Big(\E\E'\sup_{A\in\cA} |\xi^*B\xi'|^p\Big)^{1/p} + \Big(\E\sup_{A\in\cA} \Big|\sum_i (|\xi_i|^2-\E|\xi_i|^2)B_{ii}\Big|^p\Big)^{1/p}.
\end{align*}
Let $\eps$ be a Rademacher vector and let $g$ be a standard Gaussian vector. By symmetrization \cite[Lemma 6.3]{LeT91}, the contraction principle \cite[Lemma 4.6]{LeT91} and de-symmetrization \cite[Lemma 6.3]{LeT91},
\begin{align*}
& \Big(\E\sup_{A\in\cA} \Big|\sum_i (|\xi_i|^2-\E|\xi_i|^2)B_{ii}\Big|^p\Big)^{1/p} \\
& \qquad \leq 2\Big(\E\E_{\eps}\sup_{A\in\cA} \Big|\sum_i \eps_i|\xi_i|^2 B_{ii}\Big|^p\Big)^{1/p} \\
& \qquad \leq 2\Big(\E\E_{\eps}\sup_{A\in\cA} \Big|\sum_i \eps_i g_i^2 B_{ii}\Big|^p\Big)^{1/p} \\
& \qquad \leq 4\Big(\E\sup_{A\in\cA} \Big|\sum_i (g_i^2-1)B_{ii}\Big|^p\Big)^{1/p} + 2 \Big(\E_{\eps}\sup_{A\in\cA} \Big|\sum_i \eps_i B_{ii}\Big|^p\Big)^{1/p} \\
& \qquad \leq 4\Big(\E\sup_{A\in\cA} \Big|g^*Bg-\E(g^*Bg)\Big|^p\Big)^{1/p} + 4\Big(\E\sup_{A\in\cA} \Big|\sum_{i\neq j} g_i g_j B_{ij}\Big|^p\Big)^{1/p} \\
& \qquad \qquad \qquad \qquad + 2 \Big(\E_{\eps}\sup_{A\in\cA} \Big|\sum_i \eps_i B_{ii}\Big|^p\Big)^{1/p} \\
& \qquad \lesssim 32 \Big(\E\E'\sup_{A\in\cA} |g^*Bg'|^p\Big)^{1/p} + 2\Big(\E_{\eps}\sup_{A\in\cA} \Big|\sum_i \eps_i B_{ii}\Big|^p\Big)^{1/p},
\end{align*}
where in the final step we used the inequality decoupling (\ref{eqn:decElem}) and Lemma~\ref{lem:ArG}. We first estimate the second term on the far right hand side. By Khintchine's inequality, for any $C,D \in \cA$,
$$\Big(\E_{\eps}\Big|\sum_i \eps_i (C^*C-D^*D)_{ii}\Big|^p\Big)^{1/p} \leq \sqrt{p} \Big(\sum_i |(C^*C)_{ii}-(D^*D)_{ii}|^2\Big)^{1/2}.$$
Let $C_{(i)}$ denote the $i$-th column of $C$. Then we can estimate
\begin{align*}
& \Big(\sum_i |(C^*C)_{ii}-(D^*D)_{ii}|^2\Big)^{1/2} \\
& \qquad = \Big(\sum_i \Big(\|C_{(i)}\|_2^2 - \|D_{(i)}\|_2^2\Big)^2\Big)^{1/2} \\
& \qquad = \Big(\sum_i \Big(\|C_{(i)}\|_2 - \|D_{(i)}\|_2\Big)^2\Big(\|C_{(i)}\|_2 + \|D_{(i)}\|_2\Big)^2\Big)^{1/2} \\
& \qquad \leq \Big(\sum_i \|C_{(i)} - D_{(i)}\|_2^2(\|C_{(i)}\|_2 + \|D_{(i)}\|_2)^2\Big)^{1/2}.
\end{align*}
Moreover, for any fixed $i$,
\begin{align*}
\|C_{(i)} - D_{(i)}\|_2^2 & = \|(C-D)_{(i)}\|_2^2 \\
& = ((C-D)^*(C-D))_{ii} \leq \|(C-D)^*(C-D)\|_{S^{\infty}} = \|C-D\|_{S^{\infty}}^2.
\end{align*}
In conclusion, we find
\begin{align*}
& \Big(\E_{\eps}\Big|\sum_i \eps_i (C^*C-D^*D)_{ii}\Big|^p\Big)^{1/p} \\
& \qquad \leq 2\sqrt{p} d_{\infty}(C,D) \sup_{A\in\cA} \Big(\sum_i \|A_{(i)}\|_2^2\Big)^{1/2} = 2\sqrt{p}\DelO_2(\cA)d_{\infty}(C,D).
\end{align*}
Thus, by Lemma~\ref{lem:MomentsToTails} the process
$$\Big(\sum_i \eps_i (A^*A)_{ii}\Big)_{A \in\cA}$$
is subgaussian with respect to the metric $\DelO_2(\cA)d_{\infty}$ and Theorem~\ref{thm:chainsplitGeneral} immediately yields
$$\Big(\E_{\eps}\sup_{A\in\cA} \Big|\sum_i \eps_i B_{ii}\Big|^p\Big)^{1/p} \lesssim \DelO_2(\cA)\ga_{2,p}(\cA,d_{\infty}).$$
By Lemma~\ref{lem:chainLpChaosA} and the (quasi-)triangle inequality in $L^{p/2}$,
\begin{align*}
& \Big(\E\E'\sup_{A\in\cA}|\xi^*B\xi'|^p\Big)^{1/p} \\
& \qquad \lesssim \ga_{2,p}(\cA,d_{\infty})\Big(\E\sup_{A \in \cA}\|A\xi\|_2^p\Big)^{1/p} \\
& \qquad \lesssim \ga_{2,p}(\cA,d_{\infty})\Big(\E\Big|\sup_{A \in \cA}\Big|\|A\xi\|_2^2-\E\|A\xi\|_2^2\Big| + \sup_{A\in\cA}\E\|A\xi\|_2^2\Big|^{p/2}\Big)^{1/p} \\
& \qquad \lesssim \ga_{2,p}(\cA,d_{\infty})\Big(\Big(\E\sup_{A \in \cA}\Big|\|A\xi\|_2^2-\E\|A\xi\|_2^2\Big|^p\Big)^{1/2p} + \DelO_2(\cA)\Big).
\end{align*}
Finally, by Lemma~\ref{lem:supSmallSet}, the Hanson-Wright bound (\ref{eqn:H-W}) and Lemma~\ref{lem:TailsToMoments},
\begin{align*}
\Big(\E\sup_{A\in\cA} \Big|\|\pi_l(A)\xi\|_2^2 - \E\|\pi_l(A)\xi\|_2^2\Big|^p\Big)^{1/p} & \leq 2 \sup_{A\in \cA}\Big(\E\Big|\|A\xi\|_2^2 - \E\|A\xi\|_2^2\Big|^p\Big)^{1/p} \\
& \lesssim \sqrt{p}\DelO_{4}^2(\cA) + p\DelO_{\infty}^2(\cA).
\end{align*}
Collecting our estimates, we find
\begin{align*}
& \Big(\E\sup_{A\in\cA}\Big|\|A\xi\|_2^2 - \E\|A\xi\|_2^2\Big|^p\Big)^{1/p} \\
& \qquad \lesssim \ga_{2,p}(\cA,d_{\infty})\Big(\E\sup_{A \in \cA}\Big|\|A\xi\|_2^2-\E\|A\xi\|_2^2\Big|^p\Big)^{1/2p} \\
& \qquad \qquad + \ga_{2,p}(\cA,d_{\infty})\DelO_2(\cA) + \sqrt{p}\DelO_4^2(\cA) + p\DelO_{\infty}^2(\cA).
\end{align*}
By solving this quadratic inequality, we obtain the result.
\end{proof}

\appendix

\section{}

In this appendix we collect some elementary observations that are used throughout the paper. The first lemma states how to pass from moment to tail bounds. The proof is a straightforward consequence of Markov's inequality, see e.g.\ \cite[Propositions 7.11 and 7.15]{FoR13}.
\begin{lemma}
\label{lem:MomentsToTails}
If $X$ is a complex-valued random variable satisfying
$$(\E|X|^p)^{1/p} \leq ap^{1/\al} + b, \qquad \mathrm{for \ all} \ p\geq 1,$$
for some $0<a,\al<\infty$ and $b\geq 0$, then
$$\bP(|X|\geq e^{1/\al}(au + b)) \leq \exp(-u^{\al}/\al) \qquad (u\geq 1).$$
If $X$ satisfies
$$(\E|X|^p)^{1/p} \leq a_1p + a_2 \sqrt{p} + a_3, \qquad \mathrm{for \ all} \ p\geq 1,$$
for some $0\leq a_1,a_2,a_3<\infty$, then
$$\bP(|X|\geq e(a_1u + a_2\sqrt{u} + a_3)) \leq \exp(-u) \qquad (u\geq 1).$$
\end{lemma}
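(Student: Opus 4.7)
The plan is to apply Markov's inequality at the $p$-th moment with a carefully chosen value of $p$ depending on the deviation parameter $u$, which is the standard optimization trick for converting moment bounds to tail bounds.

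For the first assertion, I would fix $u \geq 1$ and set $t = e^{1/\al}(au+b)$. By Markov's inequality applied to $|X|^p$, we have $\bP(|X| \geq t) \leq (\E|X|^p)/t^p \leq ((ap^{1/\al}+b)/t)^p$ whenever $p \geq 1$. The key observation is that the choice $p = u^\al$ makes the ratio inside parentheses exactly $e^{-1/\al}$: indeed, $p^{1/\al} = u$, so $ap^{1/\al}+b = au+b$, and thus $(ap^{1/\al}+b)/t = e^{-1/\al}$. Plugging in yields $\bP(|X| \geq t) \leq e^{-p/\al} = e^{-u^\al/\al}$, as desired. The only thing to verify is that $p = u^\al \geq 1$, which is ensured by $u \geq 1$ and $\al > 0$.

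For the second assertion, the strategy is identical: fix $u \geq 1$, set $t = e(a_1 u + a_2\sqrt{u} + a_3)$, and apply Markov to obtain $\bP(|X|\geq t) \leq ((a_1 p + a_2\sqrt{p} + a_3)/t)^p$. The natural choice here is $p = u$, which immediately gives $(a_1 p + a_2 \sqrt{p} + a_3)/t = 1/e$, hence $\bP(|X|\geq t) \leq e^{-u}$. Again $p = u \geq 1$ is guaranteed by the hypothesis on $u$.

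There is essentially no obstacle here; the lemma is a routine consequence of Markov's inequality with an optimized moment exponent. The only subtlety worth highlighting is that the choices $p = u^\al$ and $p = u$ are in fact optimal (up to constants) for the given moment bounds, which is why the resulting tail bounds recover the correct exponential decay rate; any other choice would produce a suboptimal constant in the exponent. The restriction $u \geq 1$ is precisely what guarantees that the chosen $p$ lies in the admissible range where the moment hypothesis applies.
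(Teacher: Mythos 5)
Your proof is correct and is exactly the argument the paper has in mind: the paper gives no explicit proof but refers to results in Foucart--Rauhut that are proved precisely by applying Markov's inequality to $|X|^p$ with the optimal moment exponent $p=u^\al$ (respectively $p=u$). There is nothing to add.
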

The following observation is a converse statement.
\begin{lemma}
\label{lem:TailsToMoments}
Let $0<\al<\infty$. If a random variable $X$ satisfies
$$\bP(|X|\geq e^{1/\al}au) \leq be^{-u^{\al}/\al} \qquad (u\geq 0),$$
then for any $p\geq 1$,
$$(\E|X|^p)^{1/p} \leq e^{1/2e}a\Big(\sqrt{\frac{2\pi}{\al}}e^{\al/12}b\Big)^{1/p}p^{1/\al}.$$
If $X$ satisfies
\begin{equation}
\label{eqn:TTMmixedTail}
\bP(|X|\geq a_1u + a_2\sqrt{u}) \leq \exp(-u) \qquad (u\geq 0)
\end{equation}
for some $0\leq a_1,a_2<\infty$, then for all $p\geq 1$
$$(\E|X|^p)^{1/p} \leq a_1 2e^{1/(2e)}(\sqrt{2\pi}e^{1/(12p)})^{1/p}e^{-1}p + a_2 2(2e)^{-1/2}e^{1/(2e)}(\sqrt{\pi}e^{1/(6p)})^{1/p} \sqrt{p}.$$
\end{lemma}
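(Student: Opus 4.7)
My plan is to derive both bounds from the layer-cake formula
$$\E|X|^p = p\int_0^\infty t^{p-1}\bP(|X|\geq t)\,dt,$$
combined with a quantitative Stirling estimate that tracks the explicit constants.

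For the first claim I would substitute $t = e^{1/\al}au$ and apply the tail hypothesis to obtain
$$\E|X|^p \leq p(e^{1/\al}a)^p b\int_0^\infty u^{p-1}e^{-u^\al/\al}\,du.$$
The change of variables $v = u^\al/\al$ turns the integral into $\al^{p/\al-1}\Gamma(p/\al)$. Taking $p$-th roots and inserting the Stirling bound $\Gamma(x) \leq \sqrt{2\pi/x}(x/e)^x e^{1/(12x)}$ at $x = p/\al$, the algebraic identities $e^{1/\al}\al^{1/\al}(p/(\al e))^{1/\al} = p^{1/\al}$ and $\al^{-1/p}(2\pi\al)^{1/(2p)} = (2\pi/\al)^{1/(2p)}$ reduce the right-hand side to
$$(\E|X|^p)^{1/p} \leq a\cdot p^{1/\al}\cdot p^{1/(2p)}\cdot(\sqrt{2\pi/\al})^{1/p}\cdot e^{\al/(12p^2)}\cdot b^{1/p}.$$
The stated form is then obtained from the elementary estimate $p^{1/(2p)} \leq e^{1/(2e)}$ (attained at $p=e$) together with $e^{\al/(12p^2)} \leq (e^{\al/12})^{1/p}$ for $p\geq 1$, which absorbs the Stirling correction into the exponent $1/p$.

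For the second claim, I would first convert the mixed tail into a sum of two pure tails. Given $t\geq 0$, let $u^*$ solve $a_1 u^*+a_2\sqrt{u^*}=t$. The trivial inequality $t\leq 2\max(a_1 u^*, a_2\sqrt{u^*})$ forces $u^* \geq \min(t/(2a_1),t^2/(4a_2^2))$, so the hypothesis yields
$$\bP(|X|\geq t) \leq e^{-u^*}\leq e^{-t/(2a_1)}+e^{-t^2/(4a_2^2)}.$$
Splitting the layer-cake integral accordingly gives $\E|X|^p \leq I_1+I_2$, and each integral fits exactly the hypothesis of the first part of the lemma: $I_1$ with $\al=1$, $a=2a_1/e$, $b=1$, and $I_2$ with $\al=2$, $a=a_2\sqrt{2/e}=2a_2(2e)^{-1/2}$, $b=1$. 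Applying the first claim to each and combining via the concavity inequality $(x+y)^{1/p}\leq x^{1/p}+y^{1/p}$ for $p\geq 1$ then produces the announced two-term bound.

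The only real obstacle is bookkeeping: to reproduce the precise constants one must retain the tighter factor $e^{\al/(12p^2)}$ inside the $(\cdot)^{1/p}$ rather than collapsing it prematurely to $e^{\al/12}$, and one must use Stirling in its explicit remainder form $\Gamma(x+1)\leq \sqrt{2\pi x}(x/e)^x e^{1/(12x)}$. Once these conventions are fixed, the remainder of the argument is routine.
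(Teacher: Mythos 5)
Your argument is correct and lands on the same technical core as the paper: layer-cake, change of variables to Gamma integrals, and Stirling with explicit remainder $\Gamma(x)\leq\sqrt{2\pi/x}\,(x/e)^x e^{1/(12x)}$, followed by the subadditivity $(x+y)^{1/p}\leq x^{1/p}+y^{1/p}$. Two remarks on the differences. For the first bound, the paper simply cites Foucart--Rauhut, while you write out the computation; your bookkeeping is right, in particular the observation that one keeps $e^{\al/(12p^2)} = \bigl(e^{\al/(12p)}\bigr)^{1/p}$ rather than weakening at once to $\bigl(e^{\al/12}\bigr)^{1/p}$ is exactly what is needed to hit the stated constants. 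For the second bound, the paper derives a genuinely piecewise estimate $\bP(\tfrac12|X|\geq u)\leq e^{-u^2/a_2^2}$ for $u\leq a_2^2/a_1$ and $e^{-u/a_1}$ for $u\geq a_2^2/a_1$, then splits the single layer-cake integral at the breakpoint; you instead bound the tail by the sum $e^{-t/(2a_1)}+e^{-t^2/(4a_2^2)}$ valid for all $t$ and integrate each term over $[0,\infty)$. After both sides dominate by $\Gamma(p)$ and $\Gamma(p/2)$ this comes to the same place, but your repackaging of each term as an instance of the first part of the lemma (with $\al=1,\ a=2a_1/e$ and $\al=2,\ a=2a_2(2e)^{-1/2}$) is a slightly more modular presentation than the paper's direct substitution, and buys you the second bound as a corollary of the first rather than as a parallel computation.
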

\begin{proof}
For a proof of the first statement, see \cite[Proposition 7.13]{FoR13}. For the second assertion, note that (\ref{eqn:TTMmixedTail}) implies
$$
\bP(\tfrac{1}{2}|X|\geq u) \leq \left\{
  \begin{array}{ll}
    e^{-u^2/a_2^2}, & \mathrm{if} \ 0\leq u\leq a_2^2/a_1; \\
    e^{-u/a_1}, & \mathrm{if} \ u\geq a_2^2/a_1.
  \end{array}
\right.
$$
Using integration by parts and a change of variable we find
\begin{align*}
2^{-p}\E|X|^p & = p\int_0^{\infty}u^{p-1}\bP(\tfrac{1}{2}|X|\geq u) \ du \\
& \leq p\int_0^{a_2^2/a_1} u^{p-1}e^{-u^2/a_2^2} \ du + p\int_{a_2^2/a_1}^{\infty} u^{p-1}e^{-u/a_1} \ du \\
& = \tfrac{1}{2}pa_2^p\int_0^{a_2^2/a_1^2} v^{\frac{p}{2}-1} e^{-v} \ dv + pa_1^p \int_{a_2^2/a_1^2}^{\infty} v^{p-1} e^{-v} \ dv \\
& \leq \tfrac{1}{2}pa_2^p \Gamma(p/2) + pa_1^p \Gamma(p),
\end{align*}
where $\Gamma(p)=\int_0^{\infty} v^{p-1}e^{-v} \ dv$ is the gamma function. The result now readily follows using Stirling's formula, which states that
\begin{equation*}
\label{eqn:Stirling}
\Gamma(p) = \sqrt{2\pi} p^{p-1/2} e^{-p} e^{\theta(p)/12p}
\end{equation*}
for some $0\leq \theta(p)\leq 1$.
Indeed,
\begin{equation*}
p\Gamma(p) \leq p^p \sqrt{2\pi}\sqrt{p} e^{-p}e^{1/(12p)}
\end{equation*}
and therefore
$$(p\Gamma(p))^{1/p} \leq p(\sqrt{2\pi}e^{1/(12p)})^{1/p}e^{-1} e^{1/(2e)},$$
where we used that $p^{1/(2p)}\leq e^{1/(2e)}$ if $p\geq 1$. In the same way,
$$(\tfrac{1}{2}p\Gamma(p/2))^{1/p} \leq (2e)^{-1/2}e^{1/(2e)} (\sqrt{\pi}e^{1/(6p)})^{1/p} \sqrt{p}.$$
\end{proof}
The following three lemmas are used in every chaining argument in this paper.
\begin{lemma}
\label{lem:supSmallSet} Fix $1\leq p<\infty$, set $l=\lfloor\log_2(p)\rfloor$ and let $(X_t)_{t\in T}$ be a collection of complex-valued random variables. If $|T|\leq 2^{2^l}$, then
$$\Big(\E\sup_{t\in T}|X_t|^p\Big)^{1/p} \leq 2\sup_{t\in T} (\E|X_t|^p)^{1/p}.$$
\end{lemma}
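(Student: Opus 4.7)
The plan is to bound the supremum crudely by the sum of $p$-th powers and then exploit the hypothesis $|T|\leq 2^{2^l}$ to keep the cardinality factor at most $2$ after taking the $p$-th root.

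First I would write
$$\sup_{t\in T}|X_t|^p \leq \sum_{t\in T}|X_t|^p,$$
take expectations, and use monotonicity to get
$$\E\sup_{t\in T}|X_t|^p \leq \sum_{t\in T}\E|X_t|^p \leq |T|\sup_{t\in T}\E|X_t|^p.$$
Raising to the power $1/p$ yields
$$\Big(\E\sup_{t\in T}|X_t|^p\Big)^{1/p} \leq |T|^{1/p}\sup_{t\in T}(\E|X_t|^p)^{1/p}.$$

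It remains to verify that $|T|^{1/p}\leq 2$ under the stated hypothesis. Since $l=\lfloor\log_2(p)\rfloor$, we have $2^l\leq p$, hence $2^l/p\leq 1$, and the assumption $|T|\leq 2^{2^l}$ gives
$$|T|^{1/p} \leq 2^{2^l/p} \leq 2^1 = 2.$$
Combining the last two displays yields the claim. There is no real obstacle here; the only content is matching the truncation parameter $l=\lfloor\log_2(p)\rfloor$ to the cardinality bound $2^{2^l}$ so that the naive union bound over $T$ loses only a factor of $2$ in $L^p$.
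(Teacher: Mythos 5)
Your proof is correct and is essentially identical to the paper's: both bound the supremum by the sum over $T$, pass to expectations, and use $2^l\leq p$ so that $|T|\leq 2^{2^l}\leq 2^p$ gives the factor $2$ after taking the $p$-th root.
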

\begin{proof}
Since $|T|\leq 2^p$,
$$\E\sup_{t\in T} |X_t|^p \leq \sum_{t\in T} \E|X_t|^p \leq |T| \sup_{t\in T} \E|X_t|^p \leq 2^p \sup_{t\in T} \E|X_t|^p.$$
\end{proof}
\begin{lemma}
\label{lem:unionBoundEst}
Fix $1\leq p<\infty$, $0<\al<\infty$, $u\geq 2^{1/\al}$ and set $l=\lfloor\log_2(p)\rfloor$. For every $n>l$ let $(\Om_i^{(n)})_{i\in I_n}$ be a collection of events satisfying
$$\bP(\Om_i^{(n)})\leq 2\exp(-2^nu^{\al}), \qquad \mathrm{for \ all} \ i\in I_n.$$
If $|I_n|\leq 2^{2^{n+1}}$, then for an absolute constant $c\leq 16$,
\begin{equation}
\label{eqn:unionBoundEst}
\bP\Big(\bigcup_{n>l}\bigcup_{i\in I_n} \Om_i^{(n)}\Big) \leq c\exp(-pu^{\al}/4).
\end{equation}
\end{lemma}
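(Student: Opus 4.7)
The plan is to derive \eqref{eqn:unionBoundEst} by a direct union bound, after which the estimate becomes a short calculation that exploits two features: the double exponential growth of $|I_n|$ is beaten by the double exponential decay of $\bP(\Om_i^{(n)})$, and the restriction $n > l$ forces $2^n > p$, which is what produces the factor $p$ in the final exponent.

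First, I would apply the union bound, both over $i \in I_n$ and over $n > l$, to obtain
$$\bP\Bigl(\bigcup_{n>l}\bigcup_{i\in I_n} \Om_i^{(n)}\Bigr)\leq \sum_{n>l} |I_n|\cdot 2\exp(-2^n u^{\al})\leq \sum_{n>l} 2^{2^{n+1}+1}\exp(-2^n u^{\al}) = \sum_{n>l} 2\exp\bigl(-2^n(u^{\al}-2\log 2)\bigr).$$
The point of combining $|I_n|\leq 2^{2^{n+1}}$ with the probability estimate in a single exponential is that the double exponentials in the cardinality and in the tail bound are both of order $2^n$, so they can be absorbed into each other at the cost of the additive constant $2\log 2$.

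Second, I would use the hypothesis $u\geq 2^{1/\al}$, i.e.\ $u^{\al}\geq 2$, to dispose of this additive term. Since $v-2\log 2\geq v/4$ whenever $v\geq 8(\log 2)/3$, and in particular whenever $v\geq 2$, I get
$$\sum_{n>l} 2\exp\bigl(-2^n(u^{\al}-2\log 2)\bigr)\leq \sum_{n>l} 2\exp(-2^n u^{\al}/4).$$
Third, I reindex by $m=n-l-1\geq 0$, so that $2^n=2^{l+1}\cdot 2^m$. Since $l=\lfloor\log_2 p\rfloor$ we have $2^{l+1}>p$, so setting $B:=2^{l+1}u^{\al}/4>pu^{\al}/4$, the sum becomes
$$\sum_{m\geq 0} 2\exp(-B\cdot 2^m) = 2e^{-B}\sum_{m\geq 0}\exp\bigl(-(2^m-1)B\bigr).$$
The inner sum is bounded by a universal constant because $B\geq u^{\al}/2\geq 1$ (in fact, crudely, $\sum_{m\geq 0}\exp(-(2^m-1)B)\leq 1+\sum_{k\geq 1}e^{-k}=1+1/(e-1)<2$), and $e^{-B}\leq e^{-pu^{\al}/4}$. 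Collecting these estimates gives the claimed inequality with $c\leq 16$ (with room to spare).

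I do not anticipate any real obstacle; the only mild point to watch is making sure the step $u^{\al}-2\log 2\geq u^{\al}/4$ holds under the stated hypothesis $u\geq 2^{1/\al}$, which it does, and keeping track of constants sufficiently to stay below $c=16$.
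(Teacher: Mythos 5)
Your argument is correct and essentially the same as the paper's: a union bound over $i$ and $n$, the observation that the doubly exponential cardinality $2^{2^{n+1}}$ is absorbed into the probability decay $\exp(-2^n u^{\al})$ because $u^{\al}\geq 2$, the use of $2^{l+1}>p$ to extract the factor $\exp(-pu^{\al}/4)$, and a crude bound on the residual lacunary series. The only differences are cosmetic (you reindex by $m=n-l-1$ and simplify the exponent to $-2^nu^{\al}/4$ directly, whereas the paper keeps it as $(\log 2 -1)u^{\al}2^n$ and factors out $\exp(-2^l u^{\al}/2)$), and your bookkeeping yields a slightly smaller constant, but the underlying mechanism is identical.
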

\begin{proof}
By a union bound, using that $u^{\al}\geq 2$,
\begin{align*}
\bP\Big(\bigcup_{n>l}\bigcup_{i\in I_n} \Om_i^{(n)}\Big) & \leq \sum_{n>l} 2^{2^{n+1}} 2\exp(-u^{\al} 2^n) \\
& = 2\sum_{n>l} \exp(2(\log 2)2^{n}) \exp(-u^{\al} 2^n) \\
& \leq 2\sum_{n>l} \exp((\log 2-1)u^{\al} 2^{n}).
\end{align*}
Clearly,
\begin{align*}
\sum_{n>l} \exp((\log 2-1)u^{\al} 2^{n}) & = \exp(-2^l u^{\al}/2)\sum_{n>l} \exp((\log 2-1)u^{\al} 2^{n} + 2^l u^{\al}/2) \\
& \leq \exp(-2^l u^{\al}/2)\sum_{n\geq 0} \exp((\log 2-1)u^{\al} 2^{n} + 2^n u^{\al}/4).
\end{align*}
Since $\log 2 - \frac{3}{4}<0$ and $-2^{l}\leq -\frac{p}{2}$, we conclude that (\ref{eqn:unionBoundEst}) holds. Note that
\begin{align*}
c & \leq 2\sum_{n\geq 0}\exp(2^n(2(\log 2 - 1) + \tfrac{1}{2})) \\
& \leq 2 \sum_{n\geq 1}\exp(n(2(\log 2 - 1) + \tfrac{1}{2}))\leq \frac{2}{1-\exp(2(\log 2 - 1) + \tfrac{1}{2})} - 2 \leq 16.
\end{align*}
\end{proof}
\begin{lemma}
\label{lem:LpBdElem}
Fix $1\leq p<\infty$ and $0<\al<\infty$. Let $\ga\geq 0$ and suppose that $\xi$ is a positive random variable such that for some $c,u_*>0$,
$$\bP(\xi> \ga u) \leq c\exp(-p u^{\al}/4) \qquad (u\geq u_*).$$
Then, for a constant $\tilde{c}_{\al}>0$ depending only on $\al$,
$$(\E\xi^p)^{1/p} \leq \ga(\tilde{c}_{\al}c+u_{*}).$$
\end{lemma}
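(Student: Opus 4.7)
The plan is to use the layer-cake representation $\E\xi^p = p\int_0^\infty v^{p-1}\bP(\xi>v)\,dv$ and split the integral at the threshold where the tail hypothesis becomes usable. After rescaling by $\gamma$ (setting $Y=\xi/\gamma$), the assumption becomes $\bP(Y>u)\le c\exp(-pu^{\alpha}/4)$ for $u\ge u_*$, and it suffices to bound $(\E Y^{p})^{1/p}$ by $\tilde c_{\alpha}c + u_*$.

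First I would write $\E Y^{p}=p\int_0^\infty u^{p-1}\bP(Y>u)\,du$ and split the interval of integration as $[0,u_*]\cup[u_*,\infty)$. On $[0,u_*]$ the trivial bound $\bP(Y>u)\le 1$ yields a contribution of $u_*^{p}$. On $[u_*,\infty)$ the tail hypothesis gives $cp\int_{u_*}^\infty u^{p-1}\exp(-pu^{\alpha}/4)\,du$, which I would enlarge to the whole half-line and evaluate via the substitution $t=pu^{\alpha}/4$, turning it into a Gamma integral:
$$cp\int_0^\infty u^{p-1}\exp(-pu^{\alpha}/4)\,du = \frac{c}{\alpha}\Bigl(\frac{4}{p}\Bigr)^{p/\alpha}p\,\Gamma(p/\alpha).$$
Taking $p$-th roots and invoking Stirling's formula, exactly as done at the end of the proof of Lemma~\ref{lem:TailsToMoments}, reduces the $p$-th root of this quantity to $c^{1/p}$ times a constant depending only on $\alpha$ (the factors $p^{1/(2p)}$ and $e^{\alpha/(12p^{2})}$ appearing along the way are uniformly bounded for $p\ge 1$). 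Finally, the two contributions combine via the subadditivity $(a+b)^{1/p}\le a^{1/p}+b^{1/p}$ valid for $p\ge 1$.

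The only mild subtlety is the presence of the factor $c^{1/p}$ rather than $c$ in the bound above. In every application inside the paper the constant $c$ is $\ge 1$ (for instance $c\le 16$ coming from Lemma~\ref{lem:unionBoundEst}), so $c^{1/p}\le c$ and nothing needs to be said; more generally one uses $c^{1/p}\le\max(1,c)$ and absorbs the harmless unit into $\tilde c_{\alpha}$. This is really just a cosmetic issue and does not affect the proof.

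I do not anticipate a genuine obstacle here: the whole argument is a one-screen calculation of a Gaussian-type moment from a tail bound, and the only place where care is needed is in tracking the $p$-dependence when passing through Stirling so that the final constant depends only on $\alpha$, not on $p$.
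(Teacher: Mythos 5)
Your proposal is correct and follows essentially the same route as the paper's proof: layer-cake representation after rescaling by $\gamma$, split at $u_*$, bound the tail integral by an unrestricted Gamma-type integral via substitution, and control its $p$-th root uniformly via Stirling. Your remark about $c^{1/p}$ versus $c$ is a legitimate observation that the paper glosses over, but, as you note, every invocation in the paper has $c\geq 1$ (e.g., $c\leq 16$ from Lemma~\ref{lem:unionBoundEst}), so it is indeed cosmetic.
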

\begin{proof}
By integration by parts and a change of variable,
\begin{align*}
\E\xi^p & = \int_0^{\infty} pu^{p-1} \bP(\xi>u) du \\
& = \ga^p \int_0^{\infty} pv^{p-1} \bP(\xi>v\ga) dv \\
& \leq \ga^p \Big(\int_{u_*}^{\infty} p v^{p-1} c\exp(-p v^{\al}/4) dv + \int_0^{u_*} pv^{p-1} dv\Big) \\
& = \ga^p \Big(c\int_{u_*}^{\infty} p v^{p-1} \exp(-p v^{\al}/4) dv + u_*^p\Big).
\end{align*}
To complete the proof, observe that by another change of variable
\begin{align*}
\int_0^{\infty} p v^{p-1} e^{-pv^{\al}/4} dv & = p^{-p/\al} 2^{p/\al}\frac{2p}{\al} \int_0^{\infty} u^{\frac{2p}{\al}-1} e^{-u^{2}/2} du \\
& = p^{-p/\al}2^{p/\al}\frac{2p}{\al} \frac{\sqrt{2\pi}}{2} \E|g|^{\frac{2p}{\al}-1},
\end{align*}
where $g$ is a standard Gaussian. Since
$$\E|g|^{\frac{2p}{\al}-1} \leq \Big(\frac{2p}{\al}-1\Big)^{\frac{p}{\al}-\frac{1}{2}},$$
we conclude that
$$\int_0^{\infty} p v^{p-1} e^{-pv^{\al}/4} dv \leq \frac{\sqrt{2\pi}}{2}2^{p/\al}\Big(\frac{2}{\al}\Big)^{\frac{p}{\al}+\frac{1}{2}}p^{1/2}.$$
The result follows by combining these estimates.
\end{proof}

\section*{Acknowledgement}

It is a pleasure to thank Holger Rauhut for valuable comments.

\end{document}